\tikzstyle{v} = [circle, draw, inner sep=2pt, minimum size=3pt, fill=black]
\tikzstyle{l} = [rectangle, draw, rounded corners]
\theoremstyle{plain}
\newtheorem{theorem}{Theorem}[section]
\newtheorem{lemma}[theorem]{Lemma}
\newtheorem{proposition}[theorem]{Proposition}
\newtheorem{corollary}[theorem]{Corollary}
\newtheorem*{theorem*}{Theorem}
\newtheoremstyle{named}{}{}{\itshape}{}{\bfseries}{.}{.3em}{#1\thmnote{ #3}}
\theoremstyle{named}
\theoremstyle{definition}
\newtheorem{definition}[theorem]{Definition}
\newtheorem{conjecture}[theorem]{Conjecture}
\newtheorem{example}[theorem]{Example}
\DeclareMathOperator{\Cox}{Cox}
\DeclareMathOperator{\Shi}{Shi}
\DeclareMathOperator{\Ish}{Ish}
\DeclareMathOperator{\Der}{Der}
\newcommand{\Mat}{\operatorname{Mat}}
\newcommand{\A}{\mathcal{A}}
\newcommand{\Hh}{\mathcal{H}}
\newcommand{\mcS}{\mathcal{S}}
\newcommand{\I}{\mathcal{I}}
\newcommand{\B}{\mathcal{B}}
\newcommand{\D}{\mathcal{D}}
\newcommand{\M}{\mathcal{M}}
\newcommand{\Z}{\mathbb{Z}}
\newcommand{\K}{\mathbb{K}}
\newcommand{\R}{\mathbb{R}}
\newcommand{\rk}{\operatorname{rank}} 
\newcommand{\cc}{\mathbf{c}} 
\newcommand{\tbf}{\textbf} 
\newcommand{\quasi}{\operatorname{quasi}}
\newcommand{\C}{\mathbb{C}}
\newcommand{\lcm}{\operatorname{lcm}}
\begin{document}

\title{A type $B$ analog of Ish arrangement}

\author{
Tan Nhat Tran
\thanks{
Institut f\"ur Algebra, Zahlentheorie und Diskrete Mathematik, Fakult\"at f\"ur Mathematik und Physik, Leibniz Universit\"at Hannover, Welfengarten 1, D-30167 Hannover, Germany.
E-mail address: tan.tran@math.uni-hannover.de. 
}\and
Shuhei Tsujie
\thanks{Department of Mathematics, Hokkaido University of Education, Asahikawa, Hokkaido 070-8621, Japan. 
E-mail address: tsujie.shuhei@a.hokkyodai.ac.jp.}
}

\date{\today}

\maketitle

\begin{abstract}
The Shi arrangement due to Shi (1986) and the Ish arrangement due to Armstrong (2013) are deformations of the type $A$ Coxeter arrangement that share many common properties. Motivated by a question of Armstrong and Rhoades since 2012 to seek for Ish arrangements of other types, in this paper we introduce an Ish arrangement of type $B$. We study this Ish arrangement through various aspects similar to as known in type $A$ with a main emphasis on freeness and supersolvability. Our method is based on the concept of $\psi$-digraphic arrangements recently introduced due to Abe and the authors with a  type $B$ extension.
\end{abstract}

{\footnotesize \textit{Keywords}: 
Hyperplane arrangement,  Free arrangement, Supersolvable arrangement, Shi arrangement, Ish arrangement, Type $B$ root system, Vertex-weighted digraph
}

{\footnotesize \textit{2020 MSC}: 
52C35, 
05C22, 
13N15 
}
   \tableofcontents
   
\section{Introduction}
\quad
Let $V= \mathbb{R}^{\ell} $ be a finite-dimensional real vector space. 
Let $ \{x_{1}, \dots, x_{\ell}\} $ be a basis for the dual space $V^{\ast} $. 
Our discussion starts with the \textbf{Shi arrangement} $ \Shi(A_{\ell-1}) $ due to Shi \cite[Chapter 7]{Shi86},  and the \textbf{Ish arrangement} $ \Ish(A_{\ell-1}) $ due to Armstrong \cite{Arms13} defined as
\begin{equation}
\label{eq:SI}
\begin{aligned}
\Shi(A_{\ell-1}) &\coloneqq\Cox(A_{\ell-1}) \cup  \Set{ x_{i}-x_{j}= 1 | 1 \leq i < j \leq \ell}, \\
 \Ish(A_{\ell-1}) &\coloneqq\Cox(A_{\ell-1}) \cup \Set{ x_{1}-x_{j} = i | 1 \leq i < j \leq \ell},
\end{aligned}
\end{equation}
where $\Cox(A_{\ell-1})  \coloneqq \Set{ x_{i}-x_{j}=0 | 1 \leq i < j \leq \ell} $ is the  \textbf{Coxeter arrangement of type $A_{\ell-1}$}. 
The Ish arrangement is defined in the inspiration of the so-called \textbf{combinatorial symmetry} \cite{AR12} 
\begin{equation}
\label{eq:CS}
\{x_{i}-x_{j}= 1\} \longleftrightarrow \{ x_{1}-x_{j} = i\} \quad (1 \leq i < j \leq \ell).
\end{equation}

This correspondence is simply a set bijection yet led to many common properties of the Shi and Ish arrangements from different aspects, among others, the perspective of freeness and supersolvability. 
See \S\ref{subsec:free} and \S\ref{subsec:SS} for the definitions of free and supersolvable arrangements. 

We call any property that the Shi and Ish arrangements share in common a ``\tbf{Shi/Ish duality}". 
The following Shi/Ish dualities hold: 

\begin{theorem}[\cite{Headley97, A96, Arms13, AR12}]\label{Shi/Ish characteristic polynomial}
The arrangements $\Shi(A_{\ell-1})$ and $\Ish(A_{\ell-1})$ have the same characteristic polynomial $t(t-\ell)^{\ell-1}$. 
\end{theorem}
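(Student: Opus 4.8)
The plan is to compute the characteristic polynomial of each arrangement separately and verify they agree. For a rational (or integral) arrangement, the cleanest route is the finite field method of Athanasiadis: for a prime $p$ large enough, the number of points in $\mathbb{F}_p^\ell$ lying off every hyperplane equals $\chi(p)$, where $\chi$ is the characteristic polynomial. So I would fix a large prime $p$ and count, in each case, the tuples $(a_1,\dots,a_\ell)\in\mathbb{F}_p^\ell$ that avoid all the defining hyperplanes.

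For $\Ish(A_{\ell-1})$ the count is the most direct. The point must satisfy $a_i\neq a_j$ for all $i<j$ (the Coxeter conditions) together with $a_1-a_j\neq i$ for all $1\le i<j\le\ell$. The key observation is that the extra ``Ish'' conditions only constrain coordinates through their relation to the single coordinate $a_1$. So I would choose $a_1$ freely ($p$ ways), and then count the choices for $(a_2,\dots,a_\ell)$ subject to being pairwise distinct, distinct from $a_1$, and avoiding the forbidden values $a_1-i$ for the appropriate range of $i$. Processing the coordinates $a_j$ in a suitable order and tracking how many values each is forbidden from taking yields a product of the form $\prod_{k=1}^{\ell-1}(p-k)$, giving $\chi(t)=t(t-\ell)^{\ell-1}$ after reindexing; the combinatorial bookkeeping here is where one must be careful to avoid double-counting the overlaps between a Coxeter forbidden value and an Ish forbidden value.

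For $\Shi(A_{\ell-1})$ I would similarly count tuples with $a_i\neq a_j$ and $a_i-a_j\neq 1$ for all $i<j$. A standard and clean way is to interpret such a tuple modulo $p$ as a placement of the values $a_1,\dots,a_\ell$ on a cycle $\mathbb{Z}/p\mathbb{Z}$ so that no two coincide and, reading indices in increasing order, no later value sits immediately after an earlier one; this is the classical ``Pak--Stanley / parking function'' style count that also evaluates to $p(p-\ell)^{\ell-1}$. Alternatively one can cite the fact (already available in the literature, Headley and Athanasiadis) that $\Shi(A_{\ell-1})$ has characteristic polynomial $t(t-\ell)^{\ell-1}$ and simply reproduce the finite-field count for completeness.

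The main obstacle is the inclusion–exclusion bookkeeping in the Ish count: the forbidden set for a given coordinate $a_j$ comes from two sources (the Coxeter hyperplanes $x_i=x_j$ and the Ish hyperplanes $x_1-x_j=i$), and these forbidden values can coincide, so a naive product over-counts. The resolution is to organize the count around $a_1$ first and then argue that, for each remaining coordinate introduced in the right order, exactly one new forbidden value is contributed net, so that the number of available values decreases by exactly one at each step regardless of overlaps. Making this ``exactly one new forbidden value'' claim precise—rather than treating it as heuristic—is the crux of the argument, and once it is established both counts collapse to the same polynomial, proving the duality.
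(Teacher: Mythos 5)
Your overall strategy is the standard one (this theorem is quoted in the paper from \cite{Headley97, A96, Arms13, AR12} without proof, and the finite field method is exactly how Athanasiadis and Armstrong--Rhoades establish it), but the core counting claim in your Ish computation is wrong in two ways. First, the product you write, $\prod_{k=1}^{\ell-1}(p-k)$, gives $p(p-1)(p-2)\cdots(p-\ell+1)$, which is the point count for the braid arrangement $\Cox(A_{\ell-1})$, not $p(p-\ell)^{\ell-1}$; no ``reindexing'' reconciles these, as they already differ for $\ell=2$, where the Ish count is $p(p-2)$, not $p(p-1)$. Second, your proposed resolution of the overlap problem --- that each coordinate, processed in a suitable order, loses ``exactly one new forbidden value net,'' so the number of available values decreases by one at each step --- is false. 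Normalize $a_1=0$ (translation invariance contributes the factor $p$); for $\ell=3$ the choices for $a_2\notin\{0,-1\}$ number $p-2$, and then $a_3\notin\{0,-1,-2,a_2\}$ has $p-3$ options if $a_2=-2$ but $p-4$ otherwise. So the count does not factor step-by-step in the increasing order, and no decrease-by-one pattern holds; the total $1\cdot(p-3)+(p-3)(p-4)=(p-3)^2$ only factors after summation.

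The fix is a different ordering with a \emph{constant}, not decreasing, forbidden count. Process the coordinates in decreasing index order $j=\ell,\ell-1,\dots,2$: the coordinate $a_j$ must avoid the window $\{0,-1,\dots,-(j-1)\}$ (of size $j$) together with the already chosen $a_{j+1},\dots,a_\ell$ (of size $\ell-j$), and since each $a_k$ with $k>j$ was required to avoid the larger window $\{0,-1,\dots,-(k-1)\}\supseteq\{0,-1,\dots,-(j-1)\}$, these two forbidden sets are disjoint. Hence at every step there are exactly $\ell$ forbidden values and $p-\ell$ choices, yielding $p(p-\ell)^{\ell-1}$ with no inclusion--exclusion at all. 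With this correction, and with the Shi count (your cycle/Pak--Stanley sketch, or simply the citation to Headley/Athanasiadis, both of which are legitimate since the Shi side is classical), the argument becomes complete. As written, however, the crux step you yourself identified is exactly the step that fails.
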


\begin{theorem}[\cite{Atha98, AST17,Yo04}]\label{Shi/Ish freeness}
The cones over $\Shi(A_{\ell-1})$ and $\Ish(A_{\ell-1})$ are both free. 
Moreover, the cone over $\Ish(A_{\ell-1})$ is supersolvable. 
\end{theorem}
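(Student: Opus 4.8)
The plan is to cone both arrangements and treat them separately, proving the stronger supersolvability for $\Ish(A_{\ell-1})$ (which yields its freeness for free) and a direct freeness statement for $\Shi(A_{\ell-1})$. Write $c\A$ for the cone of an affine arrangement $\A\subseteq\R^{\ell}$, formed by homogenizing with a new variable $z$ and adjoining $\{z=0\}$; then $\chi_{c\A}(t)=(t-1)\chi_{\A}(t)$, so by Theorem~\ref{Shi/Ish characteristic polynomial} both cones have characteristic polynomial $(t-1)\,t\,(t-\ell)^{\ell-1}$. Hence, should either cone be free, Terao's factorization theorem forces its exponents to be $\{0,1,\ell,\dots,\ell\}$ (with $\ell-1$ copies of $\ell$); note the sum $\ell^{2}-\ell+1$ equals the number of hyperplanes of either cone, a useful consistency check. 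Recalling that supersolvable arrangements are free, with exponents read off from the level sizes of a modular chain, establishing supersolvability of $c\Ish(A_{\ell-1})$ settles that case completely.

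For $c\Ish(A_{\ell-1})$, in coordinates $z,x_{1},\dots,x_{\ell}$, I would build a modular chain by peeling off the coordinates $x_{2},x_{3},\dots,x_{\ell-1}$ in this order. The first modular coatom is the flat $\{x_{1}=x_{3}=x_{4}=\dots=x_{\ell},\ z=0\}$, obtained by merging every coordinate except the distinguished $x_{1}$ and the least-incident $x_{2}$; through it pass all hyperplanes except the $\ell$ ones meeting $x_{2}$, namely $x_{1}-x_{2}=0$, the walls $x_{2}-x_{j}=0$ for $j\ge 3$, and the single deformation $x_{1}-x_{2}=z$. The decisive combinatorial fact is that at each stage the peeled coordinate $x_{m}$ is incident to exactly $\ell$ hyperplanes (the count $(\ell-m+1)$ Coxeter walls plus $(m-1)$ deformations is independent of $m$), so every peel contributes the exponent $\ell$; the terminal rank-two pencil $\{z=0\}\cup\{x_{1}-x_{\ell}=kz:0\le k\le\ell-1\}$ contributes the remaining exponents $\ell$ and $1$. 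I would then verify modularity of each flat in the chain via the rank identity $\rk(X\wedge Y)+\rk(X\vee Y)=\rk X+\rk Y$, and read off the exponents $\{0,1,\ell,\dots,\ell\}$, matching the factorization above; this proves $c\Ish(A_{\ell-1})$ supersolvable, hence free.

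For $c\Shi(A_{\ell-1})$ supersolvability is not available, so I would prove freeness by Yoshinaga's criterion: a central arrangement is free precisely when its characteristic polynomial factors over the nonnegative integers and its Ziegler restriction to some hyperplane is a free multiarrangement with exponents compatible with that factorization. The factorization is handed to us by Theorem~\ref{Shi/Ish characteristic polynomial}. I would take the Ziegler restriction to the infinity hyperplane $\{z=0\}$: every other hyperplane $x_{i}-x_{j}=cz$ traces to the Coxeter wall $x_{i}-x_{j}=0$, and exactly two of them ($x_{i}-x_{j}=0$ and $x_{i}-x_{j}=z$) do so along each wall, whence the restriction is $\Cox(A_{\ell-1})$ carrying the constant multiplicity $2$. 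By Terao's theorem that Coxeter multiarrangements of constant multiplicity are free, this multiarrangement is free with exponents $\{0,\ell,\dots,\ell\}$, which is exactly $\{0,1,\ell,\dots,\ell\}$ with the Euler exponent removed. Feeding both inputs into Yoshinaga's criterion yields freeness of $c\Shi(A_{\ell-1})$ with the predicted exponents; small $\ell$ can be checked by hand. As a fallback I would run an inductive-freeness argument through Terao's Addition--Deletion theorem, adding the deformation hyperplanes $x_{i}-x_{j}=z$ in a suitable order and controlling the characteristic polynomials of the intermediate restrictions.

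The main obstacle is the Shi case: since supersolvability genuinely fails, the whole argument rests on the multiarrangement input, i.e.\ on proving (or invoking) freeness of the constant-multiplicity Coxeter multiarrangement that appears as the Ziegler restriction, together with the nonformal passage from that freeness plus the characteristic-polynomial factorization back to freeness of the cone. By contrast, the Ish case is comparatively mechanical once the distinguished role of $x_{1}$ and the least-incident coordinate $x_{2}$ are exploited to locate the modular chain; there the only real care is in checking modularity of the chosen flats.
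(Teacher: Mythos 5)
Your proposal cannot be checked against an internal proof because the paper never proves Theorem~\ref{Shi/Ish freeness}: it is imported as background with citations to \cite{Atha98, AST17, Yo04}. That said, your plan is mathematically correct and in fact faithfully reconstructs exactly those cited arguments. For the Ish cone, your peeling chain is the modular-coatom/nest argument of \cite{AST17}, which in the paper's own framework is the special case of Theorem~\ref{thm:AN-Ish} with the nested weights $N_j=[0,j-1]$; I checked your chain in detail: at the stage peeling $x_m$ the removed hyperplanes are $x_1-x_m=az$ $(0\le a\le m-1)$ and $x_m-x_j=0$ $(j>m)$, and any two of them intersect inside $z=0$, inside $x_j-x_k=0$, or inside $x_1-x_m=az$ composed to $x_1-x_j=az$ with $a\le m-1<j$, all of which are retained, so Definition~\ref{def:modco}(2) holds, your incidence count $(\ell-m+1)+(m-1)=\ell$ is right, and Theorem~\ref{thm:exp-ss} yields the exponents $\{0,1,\ell^{\ell-1}\}$. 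For the Shi cone your route is precisely Yoshinaga's \cite{Yo04}: the Ziegler restriction to $z=0$ is $\Cox(A_{\ell-1})$ with constant multiplicity $2$, free with all nonzero exponents equal to $\ell$ by Terao's multi-Coxeter theorem, and your addition--deletion fallback is Athanasiadis's original inductive proof \cite{Atha98}. The one point requiring care is your formulation of the criterion: in ambient dimension at least $4$, the statement ``$\chi$ factors over the nonnegative integers and the Ziegler restriction is free with matching exponents, hence the cone is free'' is not Yoshinaga's 2004 theorem but the later Abe--Yoshinaga refinement via comparison of second coefficients of characteristic polynomials; Yoshinaga's original criterion additionally demands local freeness of the cone along the hyperplane at infinity (which he verified for the Shi case by induction on localizations), and the paper itself quotes only the $3$-dimensional version (Theorem~\ref{thm:Yo-3arr}). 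Either invoke the Abe--Yoshinaga version explicitly or supply the local-freeness verification; with that citation repaired, your argument is complete, and it buys the same content as the cited literature with no genuinely new route.
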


\begin{theorem}[\cite{AR12,LRW14}]\label{Shi/Ish chamber}
The arrangements $\Shi(A_{\ell-1})$ and $\Ish(A_{\ell-1})$ have the same number of regions (connected components of the arrangement's complement) with $c$ ceilings and $d$ degrees of freedom for any nonnegative integers $c,d$. 
\end{theorem}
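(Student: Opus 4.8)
The plan is to reduce the joint equidistribution to a statement about parking functions and then exhibit a statistic-preserving bijection. By Theorem~\ref{Shi/Ish characteristic polynomial} together with Zaslavsky's formula, each arrangement has exactly $(\ell+1)^{\ell-1}$ regions, which is the number of parking functions of length $\ell$. I would first set up the two standard labelings: the Pak--Stanley labeling of the regions of $\Shi(A_{\ell-1})$ by parking functions, and Armstrong's labeling of the regions of $\Ish(A_{\ell-1})$ by the same set. Composing one with the inverse of the other yields a canonical bijection $\Phi$ between the two sets of regions, and the theorem becomes the assertion that a suitable such bijection preserves the pair $(c,d)=(\#\text{ceilings}, \#\text{degrees of freedom})$.

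The core of the argument is to translate both geometric statistics into combinatorial statistics on parking functions, separately for each arrangement. For a region $R$ I would fix the convention for a ceiling (a non-Coxeter wall of $R$ lying on the prescribed side) and for the degrees of freedom, and then read these off the labeling data: on the Shi side the ceilings and degrees of freedom of $R$ should be expressible through the comparisons recorded by the Pak--Stanley label, while on the Ish side the distinguished role of the coordinate $x_1$ lets one express them through Armstrong's label. Once both pairs of statistics are written as functions $(c_{\mathrm{Shi}}, d_{\mathrm{Shi}})$ and $(c_{\mathrm{Ish}}, d_{\mathrm{Ish}})$ on $\mathrm{PF}_\ell$, it remains to prove the identity
\[
\sum_{p \in \mathrm{PF}_\ell} t^{\,c_{\mathrm{Shi}}(p)}\, u^{\,d_{\mathrm{Shi}}(p)} \;=\; \sum_{p \in \mathrm{PF}_\ell} t^{\,c_{\mathrm{Ish}}(p)}\, u^{\,d_{\mathrm{Ish}}(p)},
\]
either by showing that the two statistic pairs coincide after a suitable relabeling of $\mathrm{PF}_\ell$, or by an explicit bijection on parking functions that intertwines them.

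I expect the main obstacle to be precisely this translation step. The combinatorial symmetry~\eqref{eq:CS} is only a bijection of hyperplanes and carries no information about which side of each wall a region sits on, so it does not by itself induce a region bijection, let alone a statistic-preserving one; the ceiling and degrees-of-freedom statistics are genuinely geometric and must be recovered from the labelings by a careful analysis of the facet-supporting hyperplanes. The finite-field and deletion--restriction machinery that underlies Theorem~\ref{Shi/Ish characteristic polynomial} is of no help here, since it computes only the total region count through $\chi(t)$ and is blind to the refined data. A cleaner alternative I would pursue in parallel is to construct the region bijection inductively, deleting the hyperplanes meeting a chosen coordinate, matching the regions of the restricted arrangements, and tracking how each deletion changes $c$ and $d$ on both sides simultaneously. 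The success of either route hinges on finding the right combinatorial counterpart of ``degrees of freedom'', which is the least transparent of the three statistics.
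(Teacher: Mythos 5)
This theorem is not proved in the paper at all: it is quoted from Armstrong--Rhoades \cite{AR12} and Leven--Rhoades--Warrington \cite{LRW14}, and the known proof (Theorem 6 of \cite{LRW14}) does exactly what your plan gestures at, namely constructs an explicit bijection between the regions of $\Shi(A_{\ell-1})$ and $\Ish(A_{\ell-1})$ preserving ceilings and degrees of freedom. So your high-level strategy points in the right direction. However, as written your proposal is a research program rather than a proof, and the gap is exactly where you yourself locate it. The entire content of the theorem is the existence of a $(c,d)$-preserving correspondence; your text reduces it to ``prove the identity $\sum_p t^{c_{\mathrm{Shi}}(p)} u^{d_{\mathrm{Shi}}(p)} = \sum_p t^{c_{\mathrm{Ish}}(p)} u^{d_{\mathrm{Ish}}(p)}$ \ldots either by a relabeling or by an explicit bijection,'' which is a restatement of the theorem, not an argument. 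You never define $c_{\mathrm{Shi}}, d_{\mathrm{Shi}}, c_{\mathrm{Ish}}, d_{\mathrm{Ish}}$ on parking functions, and without those definitions neither of your two proposed closing moves can even be begun.

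Two more concrete problems. First, the ``canonical bijection $\Phi$'' obtained by composing the Pak--Stanley labeling with the inverse of Armstrong's Ish labeling is not known to preserve $(c,d)$, and there is no reason offered that it should; the point of \cite{LRW14} is precisely that one must design new, compatible labelings (their construction sends ceilings to certain entries of the label and degrees of freedom to a count of distinguished coordinates, simultaneously on both sides) rather than compose the two standard ones. Your sentence ``the theorem becomes the assertion that a suitable such bijection preserves $(c,d)$'' quietly replaces the specific $\Phi$ by an unspecified ``suitable'' bijection, which is circular. Second, ``degrees of freedom'' needs a precise geometric definition before any translation is possible (in \cite{AR12} it is the dimension of the recession cone of the region, equivalently the number of independent unbounded directions); you correctly flag this as the least transparent statistic, but flagging the obstacle does not remove it. Your correct observations --- that the combinatorial symmetry \eqref{eq:CS} is only a hyperplane bijection carrying no side information, and that finite-field/deletion--restriction methods compute only $\chi(t)$ and are blind to the refined statistics --- show you understand why the easy routes fail, but the proposal stops exactly at the step that constitutes the proof.
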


It was questioned by Armstrong and Rhoades  \cite[\S5.3(6)]{AR12} to define and study Ish arrangements for root systems of other types. 
The main purpose of this paper is to introduce, for the first time, an Ish arrangement for  type $B$ root system which has  properties similar to the ones in Theorems \ref{Shi/Ish characteristic polynomial}, \ref{Shi/Ish freeness} and \ref{Shi/Ish chamber}. 
Here is our main definition. 

\begin{definition}
\label{def:Shi-Ish-B}
The \tbf{Shi arrangement $\Shi(B_\ell)$} and the \tbf{Ish arrangement $\Ish(B_\ell)$ of type $B_{\ell}$} are defined by 
\begin{equation*}
\label{eq:BSI}
\begin{aligned}
\Shi(B_\ell) \coloneqq&\Cox(B_\ell)  \cup  \Set{ x_{i} = 1 | 1 \leq i  \leq \ell } \cup  \Set{ x_{i}- x_{j}= 1 | 1 \leq i < j \leq \ell} \\ 
& \qquad\cup  \Set{x_{i}+x_{j}= 1 | 1 \leq i < j \leq \ell}, \\[2mm]
\Ish(B_\ell) \coloneqq& \Set{x_{i}\pm x_{j} = 0 | 1 \leq i < j \leq \ell}  \cup \Set{x_{i} = a | 1 \leq i \leq \ell, \,i-\ell \leq a \leq \ell-i+1 }\\
=& \Cox(B_\ell)  \cup  \Set{ x_{i} = 1 |1 \leq i  \leq \ell} \cup  \Set{ x_{i} = \ell+2-j  | 1 \leq i < j \leq \ell}\\ 
& \qquad\cup  \Set{ x_{i} =  -(\ell+1-j )| 1 \leq i < j \leq \ell},
\end{aligned}
\end{equation*}
where $\Cox(B_\ell) \coloneqq \Set{ x_{i}\pm x_{j}=0 | 1 \leq i < j \leq \ell} \cup \Set{ x_{i} = 0 |  1 \leq i  \leq \ell} $ is the  \textbf{Coxeter arrangement of type $B_\ell$}. 
(The case when $\ell=2$ is depicted in Figure \ref{fig:Shi Ish B2}.)
\end{definition}

  \begin{figure}[htbp!]
\centering
\begin{subfigure}{.5\textwidth}
  \centering
\begin{tikzpicture}[scale=.8]
\draw[very thick] (-3,0)--(3,0);
\draw[very thick] (0,-3)--(0,3);
\draw[very thick] (-3,-3)--(3,3);
\draw[very thick] (-3,3)--(3,-3);
\draw (-3,1)--(3,1);
\draw (1,-3)--(1,3);
\draw(-2,-3)--(3,2);
\draw (-2,3)--(3,-2);
\end{tikzpicture}
 \end{subfigure}%
\begin{subfigure}{.5\textwidth}
  \centering
\begin{tikzpicture}[scale=.8]
\draw[very thick] (-3,0)--(3,0);
\draw[very thick] (0,-3)--(0,3);
\draw[very thick] (-3,-3)--(3,3);
\draw[very thick] (-3,3)--(3,-3);
\draw (-1,-3)--(-1,3);
\draw (1,-3)--(1,3);
\draw (2,-3)--(2,3);
\draw (-3,1)--(3,1);
\end{tikzpicture}
 \end{subfigure}%
\caption{$\Shi(B_2)$ and $\Ish(B_2)$}
\label{fig:Shi Ish B2}
\end{figure}

Let us describe a relation between the Coxeter, Shi and Ish arrangements of types $A$ and $B$. 
For an arrangement $\A$, let $ \mathcal{A}^{\mathrm{ess}} $  denote its \textbf{essentialization} (see \S\ref{subsec:free}). 
Apply the transformation $x_1 \mapsto x_1$, $x_i \mapsto x_1-x_{\ell+2-i}$ $(2 \le i \le \ell +1)$ we know that
\begin{align*}
\Cox(A_{\ell})^{\mathrm{ess}} &\simeq \Set{x_{i}-x_{j} = 0 | 1 \leq i < j \leq \ell} \cup \Set{x_{i} = 0 | 1 \leq i \leq \ell}, \\
\Shi(A_{\ell})^{\mathrm{ess}} &\simeq \Set{x_{i}-x_{j} = 0,1 | 1 \leq i < j \leq \ell} \cup \Set{x_{i} = 0,1 | 1 \leq i \leq \ell}, \\
\Ish(A_{\ell})^{\mathrm{ess}} &\simeq \Set{x_{i}-x_{j} = 0 | 1 \leq i < j \leq \ell} \cup \Set{x_{i} = \ell+2-j | 1 \leq i < j \leq \ell+2},
\end{align*}
where ``$\simeq$" means \tbf{affine equivalence} (see \S\ref{subsec:SS}). 
The combinatorial symmetry \eqref{eq:CS} between $\Shi(A_{\ell})^{\mathrm{ess}}$ and $\Ish(A_{\ell})^{\mathrm{ess}}$ can now be written as
\begin{equation}
\label{eq:RCS}
\begin{aligned}
\{x_{i} = 1\} &\longleftrightarrow \{x_{i} = 1\} \quad (1 \leq i \leq \ell), \\
\{x_{i}-x_{j}= 1\} &\longleftrightarrow \{ x_{i} = \ell+2-j\}  \quad (1 \leq i < j \leq \ell). 
\end{aligned}
\end{equation}

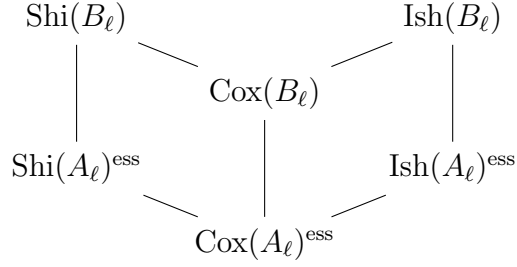
\begin{figure}[htbp!]
\centering
\begin{tikzpicture}
\draw (0,0) node(CA){$\Cox(A_{\ell})^{\mathrm{ess}}$};
\draw (0,2) node(CB){$\Cox(B_{\ell})$};
\draw (-2.5,1) node(SA){$\Shi(A_{\ell})^{\mathrm{ess}}$};
\draw (-2.5,3) node(SB){$\Shi(B_{\ell})$};
\draw ( 2.5,1) node(IA){$\Ish(A_{\ell})^{\mathrm{ess}}$};
\draw ( 2.5,3) node(IB){$\Ish(B_{\ell})$};
\draw (CA)--(CB);
\draw (SA)--(SB);
\draw (IA)--(IB);
\draw (SA)--(CA)--(IA);
\draw (SB)--(CB)--(IB);
\end{tikzpicture}
\caption{The Coxeter, Shi and Ish arrangements of types $A$ and $B$ ordered by inclusion.}
\label{fig:Hasse shi ish}
\end{figure}

By this way, $\Cox(B_{\ell})$ and $\Ish(A_{\ell})^{\mathrm{ess}}$ can be regarded as subarrangements of $\Ish(B_{\ell})$ in the same way that  $\Cox(B_{\ell})$ and $\Shi(A_{\ell})^{\mathrm{ess}}$ do for $\Shi(B_{\ell})$. 
Figure \ref{fig:Hasse shi ish} shows the Hasse diagram of these arrangements ordered by inclusion.  

In this paper, we extend Theorems \ref{Shi/Ish characteristic polynomial} and \ref{Shi/Ish freeness} to $\Shi(B_{\ell})$ and $\Ish(B_{\ell})$. 
More precisely, we show that their associated cones share the characteristic polynomial and freeness, and the cone over  $\Ish(B_{\ell})$ is supersolvable (Corollary \ref{cor:Shi/Ish freeness and char poly}).  
An extension of Theorem \ref{Shi/Ish chamber} to the regions of $\Shi(B_\ell)$ and $\Ish(B_\ell)$ will be given in the upcoming paper  \cite{NTY} of Numata, Yazawa and the second author.

More generally, we introduce three families of arrangements related to $\Shi(B_{\ell})$ and $\Ish(B_{\ell})$ in analogy to the case of type $A$, and investigate their freeness and supersolvability:
\begin{itemize}
\item \tbf{type $B$ Shi descendants} which generalize both Shi and Ish arrangements (\S\ref{subsec:BShi-des}), 
\item \tbf{type $B$ $N$-Ish arrangements} which generalize Ish arrangement (\S\ref{subsec:BN-Ish}), 
\item \tbf{type $B$ deleted Shi and Ish arrangements} which provide ``partial" interpolation between Coxeter, Shi and Ish arrangements (\S\ref{subsec:Bdelete}). 
\end{itemize}

Finally, in \S\ref{subsec:period collapse} we give a result on \tbf{period collapse} in the \tbf{characteristic quasi-polynomials} of the arrangements in the first and last columns of the type $B$ Shi descendant matrix.

\subsection{Type $B$ Shi  descendants}
\label{subsec:BShi-des}
\quad

We begin by recalling the definition of arrangements interpolating between $\Shi(A_{\ell-1})$ and $\Ish(A_{\ell-1})$ due to Duarte and Guedes de Oliveira \cite{DG18}.
Let $2 \leq k \leq \ell$. Define 
$$
\Hh_{\ell}^{k}  \coloneqq \Cox(A_{\ell-1}) 
\cup \Set{ x_{1}-x_{j}=i  | 1 \leq i < j \leq \ell, i < k}
\cup \Set{ x_{i}-x_{j}=1  | k \leq i < j \leq \ell}. 
$$
These arrangements  interpolate between $\Shi(A_{\ell-1}) = \Hh_{\ell}^{2} $ and  $ \Ish(A_{\ell-1}) = \Hh_{\ell}^{\ell} $ as $k$ varies. 
A notable property of these arrangements is that they share the same characteristic polynomial $t(t-\ell)^{\ell-1}$ \cite[Theorem 2.2]{DG18}. 
 
 The relationship of the arrangements $\Hh_{\ell}^{k}$ is studied more closely when they can be defined as \tbf{$\psi$-digraphic arrangements} (Definition \ref{def:A-psi-digraphic}) in a recent work of Abe and the authors \cite{ATT21}. 
 It is shown that the sequence 
$$  \Shi(A_{\ell-1})=\Hh_{\ell}^{2} \longrightarrow\Hh_{\ell}^{3}\longrightarrow\cdots\longrightarrow \Hh_{\ell}^{\ell} =  \Ish(A_{\ell-1}) $$
can be derived by certain operations on digraphs, the so-called \tbf{coking elimination}   \cite[Definition 2.15 and Theorem 3.13]{ATT21}.
Compared to the combinatorial symmetry \eqref{eq:CS}, this gives another way to understand the Ish arrangement $\Ish(A_{\ell-1}) $ as the final arrangement in the digraphic sequence above starting from $ \Shi(A_{\ell-1})$.

Furthermore, it was proved that the coking elimination operations mentioned previously preserve characteristic polynomial and freeness  \cite[Theorems 3.1 and 4.1]{ATT21}. 
In particular, the supersolvability (hence freeness) of the cone $\cc\Ish(A_{\ell-1}) $ implies the freeness of all other cones in the sequence.

 If an element $e$ appears $d\ge0$ times  in a multiset $M$, we  write $e^d \in M$.

\begin{theorem}[{\cite[Theorem 2.2]{DG18}}, {\cite[Theorem 1.6]{ATT21}}]\label{Shi/Ish interpolation}
The cones $\cc\Hh_{\ell}^{k}$ over the arrangements $\Hh_{\ell}^{k}$ $(2 \leq k \leq \ell)$ all are free with the same multiset of exponents 
$$\exp(\cc\Hh_{\ell}^{k}) = \{0,1, \ell^{\ell-1} \}.$$ 
\end{theorem}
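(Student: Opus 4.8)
The plan is to assemble three facts already available---the characteristic polynomial of each $\Hh_\ell^k$, the supersolvability of the Ish cone (Theorem~\ref{Shi/Ish freeness}), and the invariance of freeness under coking elimination---and then let Terao's factorization theorem read off the exponents.

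First I would reduce the exponent statement to freeness alone. By Theorem~\ref{Shi/Ish characteristic polynomial} together with \cite[Theorem 2.2]{DG18}, every $\Hh_\ell^k$ has characteristic polynomial $t(t-\ell)^{\ell-1}$; since coning multiplies the characteristic polynomial by $(t-1)$, we obtain $\chi_{\cc\Hh_\ell^k}(t)=t(t-1)(t-\ell)^{\ell-1}$, whose roots counted with multiplicity are exactly $0,1,\ell^{\ell-1}$. Hence, by Terao's factorization theorem, as soon as a cone $\cc\Hh_\ell^k$ is shown to be free its exponents are forced to be $\{0,1,\ell^{\ell-1}\}$, and the whole statement reduces to proving that each cone is free.

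Next I would install the digraphic dictionary. I would exhibit each $\Hh_\ell^k$ as a $\psi$-digraphic arrangement (Definition~\ref{def:A-psi-digraphic}) and note that the sole difference between $\Hh_\ell^k$ and $\Hh_\ell^{k+1}$ is that the hyperplanes $\Set{x_k-x_j=1 | k<j\le\ell}$ are traded for $\Set{x_1-x_j=k | k<j\le\ell}$---precisely the combinatorial symmetry \eqref{eq:CS} specialized to $i=k$. I would then verify that this trade is realized by a single coking elimination at the index $k$, so that
$$\Shi(A_{\ell-1})=\Hh_\ell^2\longrightarrow\Hh_\ell^3\longrightarrow\cdots\longrightarrow\Hh_\ell^\ell=\Ish(A_{\ell-1})$$
becomes a chain of coking eliminations.

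Finally I would anchor and propagate. By Theorem~\ref{Shi/Ish freeness} the cone $\cc\Ish(A_{\ell-1})=\cc\Hh_\ell^\ell$ is supersolvable, hence free; and since coking elimination preserves freeness \cite[Theorem 4.1]{ATT21}, the freeness of this final cone forces the freeness of every cone in the chain. Combined with the first step, each $\cc\Hh_\ell^k$ is then free with exponents $\{0,1,\ell^{\ell-1}\}$. I expect the main obstacle to be the middle step: checking that the Shi-to-Ish replacement is a genuine coking elimination in the precise sense of \cite{ATT21}---that at each stage the index $k$ is an admissible coking and that eliminating it produces exactly the $\psi$-digraphic arrangement $\Hh_\ell^{k+1}$. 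Once this combinatorial bookkeeping is settled, the propagation of freeness and the readout of exponents are purely formal.
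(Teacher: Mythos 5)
Your proposal is correct and follows essentially the same route the paper attributes to \cite{ATT21}: realize $\Hh_{\ell}^{2}\to\cdots\to\Hh_{\ell}^{\ell}$ as a chain of coking eliminations on $\psi$-digraphic arrangements, anchor freeness at the supersolvable cone $\cc\Ish(A_{\ell-1})$, propagate it back through the chain via the freeness-preservation of coking elimination \cite[Theorems 3.1 and 4.1]{ATT21}, and read off $\exp(\cc\Hh_{\ell}^{k})=\{0,1,\ell^{\ell-1}\}$ from the common characteristic polynomial $t(t-\ell)^{\ell-1}$ of \cite{DG18} via Terao's factorization theorem. The ``combinatorial bookkeeping'' you flag as the main obstacle is precisely the content of \cite[Definition 2.15 and Theorem 3.13]{ATT21}, so your outline matches the cited proof step for step.
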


\textbf{Notation}. For subsequent discussion, we need to fix some notation. 
 For integers $a\le b$ and $\ell\ge1$, denote $[a,b] \coloneqq \{n\in\Z\mid a \le n\le b\}$ and $[\ell] \coloneqq [1,\ell]$. 
For a subset $N\subseteq \Z$, denote $-N \coloneqq \{-n\mid n\in N\}.$ 
For notational convenience,  when writing the defining equation of a hyperplane, e.g., by $x_{i} =N$ and  $x_{i} =  Nz$ we mean the affine coordinate hyperplanes $x_{i}=n$ and its homogenizations $x_{i} =nz$ for all  $ n \in N$. 

More recently, M\"ucksch, R\"ohrle and the first author  \cite{MRT23} defined the \tbf{Shi descendants} as generalization of the arrangements $\Hh_{\ell}^{k}$ in the study of \tbf{flag-accurate arrangements}  (Definition \ref{def:FA-arr}), a subclass of free arrangements involving freeness and exponents of restrictions.

Let  $m\ge 0$, $1 \le k \le \ell$, $0 \le p \le \ell$ be integers. 
The \textbf{Shi descendant} $\A^{p,k}_\ell(m)  $ (see \cite[Definition 7.15]{MRT23} when $d=0$) is the arrangement consisting of the following hyperplanes:
\begin{align*}
x_{i} - x_{j} &= 0\quad  (1 \leq i < j \leq \ell), \\
x_{i} - x_{j} &= 1 \quad (1 \leq i < j \leq \ell+1-k), \\
x_i&= [1- m-\min\{\ell-i+1, k\}  , 0 ]  \quad (1 \leq   i \leq p), \\
x_i&= [- m-\min\{\ell-i+1, k\}   , 0]  \quad (p<  i \leq \ell).
\end{align*}

We may regard $\A^{p,k}_\ell (m)$ as the entry $a_{p,k}$ in an $(\ell+1)\times \ell$ matrix $(a_{p,k})_{0 \le p \le \ell, 1 \le k \le \ell}$. 
We call this matrix the  \tbf{Shi descendant matrix}. 
We use the same name to call the matrix consisting of the cones over the Shi descendants.

The motivation to define the Shi descendants is that each row of the matrix can also be constructed by coking elimination starting from a Shi-like arrangement \cite[Proposition 7.16]{MRT23}. 
These Shi arrangements (or $\A^{p,1}_\ell(m)$ for $0 \le p \le \ell$ in the first column of our matrix) were considered earlier by Athanasiadis \cite[Theorem 3.1]{Atha98} with a nice property that they contain sufficient deletions and restrictions  in order to apply the addition-deletion theorem  \ref{thm:AD} to guarantee their (inductive) freeness. 

The essentialization of the arrangement $\Hh_{\ell+1}^{k+1}$ for each $ 1 \leq k \leq \ell $ is affinely equivalent to $\A^{\ell,k}_\ell (1) = \A^{0,k}_\ell (0)$  \cite[Proposition 2.11]{ATT21}. 
Thus $\Hh_{\ell+1}^{k+1}$ can be found in the $\ell$-th row of the Shi descendant  matrix when $m=1$, or in the $0$-th row when $m=0$. 
In this way the Shi descendants can be viewed as ``vertical" generalization of the arrangements $\Hh_{\ell}^{k}$. 
 
 \begin{theorem}[{\cite[Theorem 7.22]{MRT23}}]
\label{thm:MRT-main1}
Let $m\ge 0$, $1 \le k \le \ell$ and $0 \le p \le \ell$. 
The cone $\cc\A^{p,k}_\ell(m)$ in the Shi descendant matrix is flag-accurate with exponents 
$$\exp(\cc\A^{p,k}_\ell(m)) = \{1, (\ell + m)^{p}, (\ell + m+1)^{\ell-p} \}.$$ 
\end{theorem}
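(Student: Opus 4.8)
The plan is to establish the two ingredients of flag-accuracy separately: (i) freeness of $\cc\A^{p,k}_\ell(m)$ with the asserted exponents, and (ii) a complete flag of flats along which the restrictions are free with the accurately decreasing sub-multisets of those exponents. For (i) I would first treat the first column $k=1$, where $\A^{p,1}_\ell(m)$ is exactly one of the Shi-like arrangements of Athanasiadis: here every pairwise hyperplane $x_i-x_j=1$ is present, so the deletion/restriction pattern of \cite[Theorem 3.1]{Atha98} applies and the addition-deletion Theorem \ref{thm:AD} yields inductive freeness, while a direct count of the coordinate hyperplanes pins the exponents down to $\{1,(\ell+m)^{p},(\ell+m+1)^{\ell-p}\}$. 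To move across a fixed row I would invoke \cite[Proposition 7.16]{MRT23}, which realizes $\A^{p,1}_\ell(m)\to\A^{p,2}_\ell(m)\to\cdots\to\A^{p,\ell}_\ell(m)$ as a sequence of coking eliminations of $\psi$-digraphic arrangements; since these operations preserve both freeness and the characteristic polynomial \cite[Theorems 3.1 and 4.1]{ATT21}, and hence the exponents, freeness with the claimed exponents propagates to every entry $(p,k)$ of the matrix.

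For (ii) I would argue by induction on $\ell$, producing one flat at a time that strips off the current largest exponent. The natural candidate is the restriction to the coordinate flat $\{x_\ell=0\}$, which lies in $L(\cc\A^{p,k}_\ell(m))$ because $0$ always belongs to the range of heights prescribed for the index $\ell$. Setting $x_\ell=0$ turns each braid hyperplane $x_i-x_\ell=0$ into $x_i=0$ (already a member of every height range), makes the parallel hyperplanes $x_\ell=a$ with $a\neq0$ disappear, and leaves the remaining coordinate hyperplanes intact; a short computation using $\min\{a+1,b\}-1=\min\{a,b-1\}$ identifies the result, after renaming, with the cone over a smaller Shi descendant with shifted parameters $(\ell-1,m+1,k-1)$, namely $\cc\A^{p,k-1}_{\ell-1}(m+1)$ when $p<\ell$ and $\cc\A^{p-1,k-1}_{\ell-1}(m+1)$ when $p=\ell$. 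In either case the inductive hypothesis (Theorem \ref{thm:MRT-main1} for $\ell-1$) gives that this restriction is free with exponents $\{1,(\ell+m)^{p},(\ell+m+1)^{\ell-1-p}\}$, respectively $\{1,(\ell+m)^{p-1}\}$, which is obtained from $\{1,(\ell+m)^{p},(\ell+m+1)^{\ell-p}\}$ by deleting exactly one copy of the largest exponent. Iterating $\{x_\ell=0\},\{x_{\ell-1}=0\},\dots$ thus builds the required maximal flag; the inductive hypothesis supplies flag-accuracy of each restriction, while (i) supplies freeness of $\cc\A^{p,k}_\ell(m)$ itself with the full exponent multiset, so together they certify the flag as accurate all the way down.

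The main obstacle is the identification step in (ii): one must check, with genuine care over the boundary cases, that the restriction to $\{x_\ell=0\}$ is affinely equivalent to the asserted smaller Shi descendant. This requires tracking how the two height families reindex under $\ell\mapsto\ell-1$, $m\mapsto m+1$, $k\mapsto k-1$, and how the extra hyperplanes $x_i=1$ arising from $x_i-x_\ell=1$ (which only occur in the excluded case $k=1$) merge into the existing ranges; the split between $p<\ell$ and $p=\ell$ is exactly what produces the two phases of the flag, in which one first removes all $\ell-p$ copies of $\ell+m+1$ and then the copies of $\ell+m$. A cleaner, unified alternative worth attempting is to exhibit $\cc\A^{p,k}_\ell(m)$ as \emph{MAT-free} by adding the coordinate hyperplanes in batches keyed to the multiplicities of the exponents; this would deliver freeness, the exponents, and an accurate flag simultaneously, at the cost of verifying the rank conditions of the Multiple Addition Theorem directly.
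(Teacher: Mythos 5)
Your argument is essentially correct, but note first that this paper never proves Theorem \ref{thm:MRT-main1}: it imports it from \cite[Theorem 7.22]{MRT23} and only sketches that proof's ``reverse'' strategy, which it then replicates for type $B$ in Theorems \ref{thm:k=1,l,m,p} and \ref{thm:main-BSI-free-SS}. Measured against that template, your part (i) matches it: first-column freeness via Athanasiadis-style addition-deletion induction, then horizontal propagation along rows (the paper's route transfers both freeness \emph{and} flag-accuracy horizontally via the modular coatom technique, Proposition \ref{prop:modular coatom}, whose part (2) needs the coatom's exponents not to exceed $|\A\setminus\B|$ --- exactly what the constancy of exponents along a row supplies). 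Your part (ii) is where you genuinely diverge, and it works: instead of transferring flag-accuracy horizontally from the first column, you build the flag for every entry at once by induction on $\ell$, restricting to $x_\ell=0$ (available since $0$ lies in every weight interval) and identifying the restriction with a descendant along the diagonal $(\ell,k,m)\mapsto(\ell-1,k-1,m+1)$; the identity $\min\{a+1,b+1\}=\min\{a,b\}+1$ does validate this, the invariance of $\ell+m$ along the diagonal explains why the exponent values $\ell+m$ and $\ell+m+1$ persist, and the split $p<\ell$ versus $p=\ell$ correctly yields the two phases of the flag (first stripping the $\ell-p$ copies of $\ell+m+1$, then the copies of $\ell+m$). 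What your route buys is a self-contained, uniform flag built from coordinate flats, bypassing the horizontal flag-accuracy transfer entirely; what the paper's route buys is that only the first column needs the restriction analysis, with everything else following from one soft lemma.

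Two caveats, neither fatal. First, $k=1$ is \emph{not} excluded from the theorem ($1\le k\le\ell$), and your induction unavoidably reaches first-column entries (each diagonal step decreases $k$), so the $k=1$ step is mandatory: there the restriction to $x_\ell=0$ acquires the hyperplanes $x_i=1$ from $x_i-x_\ell=1$, and only after the translation $x_i\mapsto x_i-1$ --- legitimate, since flag-accuracy is invariant under the induced linear equivalence of cones and flats pull back through it --- does one recover $\cc\A^{p,1}_{\ell-1}(m+1)$, resp.\ $\cc\A^{\ell-1,1}_{\ell-1}(m+1)$ when $p=\ell$. You flag this merging, but you should present it as the $k=1$ case of the induction rather than an ``excluded case.'' Second, your freeness propagation in (i) leans on \cite[Theorems 3.1 and 4.1]{ATT21}, whose freeness transfer is, per this paper's own discussion, anchored at the supersolvable last column; a cleaner patch within your own framework is to use only the characteristic-polynomial preservation of coking elimination to compute $\chi_{\cc\A^{p,k}_\ell(m)}(t)$, and then deduce freeness of each entry from your restriction identification together with the division theorem (Theorem \ref{thm:DT}), since the restriction is free by your induction and its characteristic polynomial visibly divides that of the ambient cone.
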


We remark that the freeness of the Shi descendants can be shown by using the method in \cite{ATT21} which relies on the supersolvability of the Ish-like arrangements in the last column (see \cite[Remark 7.19]{MRT23}). 
In particular, the arrangements in the same row have the same multiset of exponents hence characteristic polynomial. 
The Shi/Ish duality for flag-accuracy in Theorem \ref{thm:MRT-main1} is proved by a different ``reverse'' approach by first employing the flag-accuracy of the Shi-like arrangements in the first column, then extending to all other arrangements in the matrix. 

All concepts discussed above are defined based on a root system of type $A$. 
We now introduce a type $B$ analog of the Shi  descendants.
\begin{definition}
\label{def:B-SI}
Let $m\ge 1, 1 \le k \le \ell$ and $0 \le p \le \ell$. 
The   \textbf{type $B$ Shi  descendant} $\B^{p,k}_\ell(m)$ is the arrangement  in $ \mathbb{R}^{\ell} $
consisting of the following hyperplanes
\begin{align*}
x_{i}\pm x_{j} &= 0\quad  (1 \leq i < j \leq \ell), \\
x_{i}\pm x_{j} &= 1 \quad (1 \leq i < j \leq \ell+1-k), \\
x_i&=  [2-m-\min\{\ell-i+1, k\},\,\min\{\ell-i+1, k\}+m-1]  \quad (1 \leq   i \leq p), \\
x_i&=  [1-m-\min\{\ell-i+1, k\},\,\min\{\ell-i+1, k\}+m-1] \quad (p<  i \leq \ell).
\end{align*}
\end{definition}

We will  define further concepts for type $B$. 
For simplicity, sometimes we omit the term ``type $B$" if no confusion arises. 
For example, we will call the arrangements in Definition \ref{def:B-SI} the Shi  descendants $\B^{p,k}_\ell(m)$. 

We may view each $\A^{p,k}_\ell(m)$ as a subarrangement of  $\B^{p,k}_\ell(m+1)$.
In a similar way we define the  \tbf{type $B$ Shi descendant matrix} as the matrix consisting of the  Shi  descendants $\B^{p,k}_\ell(m)$:
$$
\begin{pmatrix}
    \B^{0,1}_\ell(m)& \overset{\small{k\, \text{varies}}}{\longrightarrow} & \B^{0,\ell}_\ell(m)\\
\downarrow    \scriptsize{\text{$p$ varies}}  & \vdots  & \vdots  \\
\B^{\ell,1}_\ell(m) & \cdots & \B^{\ell,\ell}_\ell(m) 
          \end{pmatrix}.
       $$
      
  Similar to the type $A$ case, we will describe the relation between the arrangements in the same row when we introduce the  \tbf{type $B$ $\psi$-digraphic arrangement} and \tbf{coking elimination} in \S\ref{subsec:BDA} (see Theorem \ref{thm:BSI-seq}). 
  This will also justify the definition of  $\B^{p,k}_\ell(m)$ as a counterpart of $\A^{p,k}_\ell(m)$.

The arrangements $\B^{p,1}_\ell(m)$ in the first and  $\B^{p,\ell}_\ell(m)$ in the last column play a role of Shi-like and Ish-like arrangements, respectively. 
These arrangements are of our main interest, and we give their descriptions explicitly below. 
For $0 \le p \le \ell$,
\begin{equation}
\label{eq:1stcol}
  \B^{p,1}_\ell(m): 
\begin{cases}
x_{i}\pm x_{j} &= [0 , 1]\quad  (1 \leq i < j \leq \ell), \\
x_i&=  [1-m , m] \quad (1 \leq   i \leq p), \\
x_i&=[-m , m]  \quad (p<  i \leq \ell).
\end{cases}
\end{equation}

\begin{equation}
\label{eq:lastcol}
  \B^{p,\ell}_\ell(m):
  \begin{cases}
x_{i}\pm x_{j} &= 0\quad  (1 \leq i < j \leq \ell), \\
x_i&=  [1-m+i-\ell , m+\ell-i] \quad (1 \leq   i \leq p), \\
x_i&=  [-m+i-\ell , m+\ell-i]   \quad (p<  i \leq \ell).
\end{cases}
\end{equation}

In particular, take $m=1$, we obtain $\Shi(B_\ell) = \B^{\ell,1}_\ell(1)$ at bottom-left corner, and $\Ish(B_\ell) = \B^{\ell,\ell}_\ell(1)$ at bottom-right corner. 
See Figure \ref{fig:BSI-arr} for the Shi descendant  matrix for $\ell=3$.

Our first main result in the present paper is an analog of Theorem \ref{thm:MRT-main1}.
\begin{theorem}
 \label{thm:main-BSI-free-SS}
Let $m\ge 1, 1 \le k \le \ell$ and $0 \le p \le \ell$. 
The cone $\cc\B^{p,k}_\ell(m)$ in the Shi descendant matrix is flag-accurate with exponents 
$$\exp(\cc\B^{p,k}_\ell(m)) = \{1, (2m+2\ell-2)^{p}, (2m+2\ell-1)^{\ell-p}\} .$$ 
Moreover, if $\ell \ge 2$ then $\textbf{c} \B^{p,k}_\ell(m)$ is not supersolvable except the arrangements in the last column of the  matrix.
 \end{theorem}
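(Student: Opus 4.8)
The plan is to treat the two assertions in turn, reusing the type $B$ $\psi$-digraphic machinery of Section~\ref{subsec:BDA} and following the architecture of the type $A$ proof of Theorem~\ref{thm:MRT-main1}. Write $\A=\cc\B^{p,k}_\ell(m)$ and set $e=2m+2\ell-2$, so the target exponent multiset is $\{1,e^{p},(e+1)^{\ell-p}\}$. The backbone is: (i) prove directly that the last-column cones $\cc\B^{p,\ell}_\ell(m)$ are supersolvable and read off their exponents; (ii) move across each row by type $B$ coking elimination to obtain freeness and exponents for every $\cc\B^{p,k}_\ell(m)$; (iii) upgrade freeness to flag-accuracy; and (iv) prove non-supersolvability for $k\le\ell-1$.

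For step (i), I would build a modular flag for the Ish-like arrangement \eqref{eq:lastcol} by adjoining the coordinate directions one at a time in the order $x_1,x_2,\dots,x_\ell$, in the spirit of the type $A$ supersolvability of $\cc\Ish(A_{\ell-1})$ recalled in Theorem~\ref{Shi/Ish freeness}. Since every non-Coxeter hyperplane of $\B^{p,\ell}_\ell(m)$ is a genuine coordinate hyperplane $x_i=\mathrm{const}$, there are no translated braid hyperplanes to spoil concurrency, and one checks that each successive flat is modular (the hyperplanes missing it meet it only at the center). At the step adjoining $x_i$ one removes the $x_i$-coordinate hyperplanes together with the $2(i-1)$ Coxeter braid hyperplanes $x_i\pm x_j=0$ $(j<i)$; as the $x_i$-interval of Definition~\ref{def:B-SI} has length $2m+2\ell-2i$ for $i\le p$ and $2m+2\ell-2i+1$ for $i>p$, the exponent contributed is $(2m+2\ell-2i)+2(i-1)=e$ for $i\le p$ and $e+1$ for $i>p$. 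Together with the cone/Euler exponent $1$ this yields $\exp(\cc\B^{p,\ell}_\ell(m))=\{1,e^{p},(e+1)^{\ell-p}\}$. Step (ii) is then immediate: the type $B$ coking elimination of Theorem~\ref{thm:BSI-seq} connects every $\B^{p,k}_\ell(m)$ in a fixed row to the last-column member $\B^{p,\ell}_\ell(m)$ and, as the type $B$ analog of \cite[Theorems 3.1 and 4.1]{ATT21}, preserves freeness and characteristic polynomial; hence the supersolvability of the last column forces all cones in that row to be free with the same exponents.

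Step (iii), the passage from freeness to flag-accuracy (Definition~\ref{def:FA-arr}), is the most delicate point and I expect it to be the main obstacle. Following the ``reverse'' strategy of \cite{MRT23}, I would first establish flag-accuracy for the Shi-like first column \eqref{eq:1stcol}, where the deletion and restriction of a suitable coordinate hyperplane both remain in the family and one can invoke the addition-deletion theorem~\ref{thm:AD} in the manner of Athanasiadis~\cite{Atha98}. The heart of the matter is to fix a complete flag $X_0\subsetneq X_1\subsetneq\cdots$ along which each restriction $\A^{X_i}$ is again (a product involving) a type $B$ Shi descendant of smaller rank, so that its freeness and exponents are known from the first part and form the required nested subsequence of $\exp(\A)$. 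Verifying that the restrictions never leave this controllable class and that the exponents nest correctly is where the real work lies; flag-accuracy of the remaining columns is then transported from the first column by the same coking-elimination comparison as in step (ii).

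Finally, for step (iv) I would prove non-supersolvability whenever $\ell\ge2$ and $k\le\ell-1$ by localization. Since supersolvability is inherited by localizations $\A_X$ (whose intersection lattice is the lower interval $[\hat0,X]$, and lower intervals of supersolvable geometric lattices are supersolvable), it suffices to produce one flat $X$ with $\A_X$ not supersolvable. Fixing a pair $i<j\le\ell+1-k$ (which exists precisely because $k\le\ell-1$), take $X=\{x_i=x_j=0\}\cap\{z=0\}$ in the cone; then $\A_X$ is a rank-$3$ arrangement in $x_i,x_j,z$ consisting of the coned coordinate lines $x_i=\mathrm{const}$ and $x_j=\mathrm{const}$, the Coxeter braid lines $x_i\pm x_j=0$, the translated braid lines $x_i\pm x_j=1$, and the line at infinity. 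One checks by the projective modular-point criterion that $\A_X$ has no modular point: for instance the pencil-center $[0:1:0]$ fails because the translated line $x_i-x_j=1$ meets the Coxeter line $x_i+x_j=0$ at the half-integral point $x_i=\tfrac12,\ x_j=-\tfrac12$, whose join with that center is the non-integral line $x_i=\tfrac12$, absent from the (integrally defined) arrangement; the symmetric candidate and the finitely many remaining high-multiplicity centers are ruled out the same way. Hence $\A_X$, and therefore $\cc\B^{p,k}_\ell(m)$, is not supersolvable, while the explicit chain of step (i) shows the last-column arrangements are the genuine exceptions.
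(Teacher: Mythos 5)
Your outline gets several components right --- the first-column flag-accuracy via addition--deletion (this is the paper's Theorem \ref{thm:k=1,l,m,p}), the direct supersolvability of the last column, and the choice of rank-$3$ flat $x_i=x_j=z=0$ with $i<j\le\ell+1-k$ for non-supersolvability --- but your step (ii), which carries the whole theorem, rests on a result that does not exist. You assert that type $B$ coking elimination ``as the type $B$ analog of \cite[Theorems 3.1 and 4.1]{ATT21}, preserves freeness and characteristic polynomial.'' The paper proves no such thing: Theorem \ref{thm:BSI-seq} supplies only a \emph{set bijection} between consecutive row members plus the statement that the vertices $n\in[\ell-k+2,\ell]$ are B-simplicial, and the paper explicitly flags the preservation question as open at the end of \S\ref{subsec:BDA} (``It would be interesting to find an analog of this result for the type $B$ coking elimination''). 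Indeed your own step (iv) shows supersolvability is \emph{not} preserved along a row, so any preservation claim needs genuine proof. The paper's actual mechanism is different in both tool and direction: for fixed $k$, it peels off the B-simplicial vertices $\ell,\ell-1,\dots,\ell-k+2$ via Proposition \ref{prop:B-simplicial} (the modular coatom technique), reducing $\cc\B^{p,k}_\ell(m)$ not to the last-column member but to the smaller \emph{first-column} cone $\cc\B^{p,1}_{\ell-k+1}(m+k-1)$; flag-accuracy and exponents then flow upward from Theorem \ref{thm:k=1,l,m,p}, with each peeled vertex contributing the exponent $|\psi_\ell^{p,k}(n)|+2n-2=2m+2\ell-1$, and the last column is supersolvable precisely because its core has rank $2$. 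There is no modular-coatom relation linking $\B^{p,k}_\ell(m)$ to $\B^{p,\ell}_\ell(m)$ directly, so your ``transfer from the last column'' cannot be repaired by substituting the modular-coatom tool; the same unproved preservation claim also undermines your transport of flag-accuracy in step (iii), for which the paper instead uses Proposition \ref{prop:B-simplicial}(ii) (flag-accurate coatom with exponents bounded by $|\A\setminus\B|$).

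Two smaller points. First, your first-column induction needs the extra deformation parameter $a$ as in the paper's $\B^p_\ell(m,a)$: restricting to a coordinate hyperplane enlarges the coordinate weights by $a$, and without this parameter the family is not closed under the restrictions you invoke. Second, your non-supersolvability argument is incomplete as stated: you rule out two candidate modular points explicitly and dismiss ``the finitely many remaining high-multiplicity centers'' without argument, whereas the paper's Proposition \ref{prop:nonSS} closes this cleanly by counting --- an M-chain would require a modular coatom with at least $2m'+3$ hyperplanes, while every coatom localization of $\cc\B^{p,1}_2(m')$ has at most $2m'+2$ --- and then applies Theorem \ref{thm:localization-ss-free} using the edge $(1,2)\in E_{T_\ell^k}$ for $k<\ell$; note that the localized weights have length parameter $m'=m+k-1$, not $m$, which your sketch should track.
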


\begin{corollary}\label{cor:Shi/Ish freeness and char poly}
The cones over $\Shi(B_{\ell})$ and $\Ish(B_{\ell})$ are free with the same multiset of exponents $\{1,(2\ell)^{\ell}\}$.
In particular, $\Shi(B_{\ell})$ and $\Ish(B_{\ell})$ have the same characteristic polynomial $(t-2\ell)^{\ell}$. 
Moreover, the cone over $\Ish(B_{\ell})$ is supersolvable. 
\end{corollary}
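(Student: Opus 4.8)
The plan is to derive Corollary \ref{cor:Shi/Ish freeness and char poly} as the special case $m=1$, $p=\ell$ of Theorem \ref{thm:main-BSI-free-SS}, which is the central machinery already set up. First I would recall from the discussion following \eqref{eq:lastcol} that we have the two identifications at the bottom row of the matrix: $\Shi(B_\ell) = \B^{\ell,1}_\ell(1)$ sits at the bottom-left corner (first column, $k=1$) and $\Ish(B_\ell) = \B^{\ell,\ell}_\ell(1)$ sits at the bottom-right corner (last column, $k=\ell$). Since both are obtained by taking $m=1$ and $p=\ell$, Theorem \ref{thm:main-BSI-free-SS} immediately yields that both cones $\cc\Shi(B_\ell)$ and $\cc\Ish(B_\ell)$ are free.

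Next I would compute the exponents by substituting $m=1$, $p=\ell$ into the exponent multiset from Theorem \ref{thm:main-BSI-free-SS}. We get $2m+2\ell-2 = 2\ell$ and $2m+2\ell-1 = 2\ell+1$, and with $p=\ell$ the block of size $\ell-p$ is empty, so the multiset collapses to $\exp(\cc\B^{\ell,k}_\ell(1)) = \{1, (2\ell)^{\ell}\}$, independent of $k$. This is exactly the claimed multiset $\{1, (2\ell)^\ell\}$ and it is shared by both arrangements.

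For the characteristic polynomial, I would invoke the standard Terao factorization together with the relationship between the characteristic polynomial of an arrangement and that of its cone. The factorization theorem gives $\chi(\cc\B^{\ell,k}_\ell(1), t) = \prod (t - e_i)$ over the exponents, namely $(t-1)(t-2\ell)^\ell$; dehomogenizing (dividing out the factor $(t-1)$ coming from the cone coordinate) yields the characteristic polynomial $(t-2\ell)^\ell$ for the affine arrangement. Since $\Shi(B_\ell)$ and $\Ish(B_\ell)$ have the same exponents, they have the same characteristic polynomial $(t-2\ell)^\ell$, establishing the type $B$ Shi/Ish duality for the characteristic polynomial.

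Finally, for supersolvability I would again appeal to the ``moreover'' clause of Theorem \ref{thm:main-BSI-free-SS}: the cones $\cc\B^{p,k}_\ell(m)$ are supersolvable precisely for the arrangements in the last column of the matrix. Since $\Ish(B_\ell) = \B^{\ell,\ell}_\ell(1)$ lies in the last column ($k=\ell$), its cone is supersolvable. The main obstacle here is essentially nonexistent at the level of this corollary, since all the real work is packaged into Theorem \ref{thm:main-BSI-free-SS}; the only care needed is to verify the index bookkeeping, i.e. that the substitution $m=1$, $p=\ell$, $k \in \{1,\ell\}$ correctly recovers $\Shi(B_\ell)$ and $\Ish(B_\ell)$ from Definition \ref{def:B-SI} via \eqref{eq:1stcol} and \eqref{eq:lastcol}, and that the empty $(\ell-p)$-block is handled correctly in the exponent count.
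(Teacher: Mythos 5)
Your proposal is correct and follows exactly the paper's (implicit) route: the corollary is a direct specialization of Theorem \ref{thm:main-BSI-free-SS} at $m=1$, $p=\ell$, $k\in\{1,\ell\}$, using the identifications $\Shi(B_\ell)=\B^{\ell,1}_\ell(1)$ and $\Ish(B_\ell)=\B^{\ell,\ell}_\ell(1)$, Terao's factorization theorem together with the standard relation $\chi_{\cc\A}(t)=(t-1)\chi_\A(t)$, and the last-column supersolvability established in the proof of that theorem (equivalently, via Theorem \ref{thm:N-Ish-B-free-SS}, since the last-column arrangements are type $B$ $N$-Ish arrangements with signed-nest tuples). Your index bookkeeping, including the empty $(\ell-p)$-block when $p=\ell$, is handled correctly.
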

 
 The proof of Theorem \ref{thm:main-BSI-free-SS} is similar in spirit to Theorem \ref{thm:MRT-main1}. 
 The argument for the Ish arrangements in the last column is easiest which follows directly from the modular coatom technique \ref{prop:modular coatom}. 
 For the remaining arrangements, we first need to show the flag-accuracy of the Shi arrangements in the first column, then use the modular coatom technique to transfer this property to the other arrangements along each row in the matrix.
 
It was observed in \cite[\S1]{AR12} that the Ish arrangement $ \Ish(A_{\ell-1}) $ is sort of a ``toy model" for the Shi arrangement $ \Shi(A_{\ell-1}) $ in the sense that for any property $P$ that they share, the proof that $ \Ish(A_{\ell-1}) $ satisfies $P$ is easier than the proof that $ \Shi(A_{\ell-1}) $ satisfies $P$. 
 As the preceding paragraph suggests, this observation continues to be the case for type $B$ regarding the flag-accuracy in Theorem \ref{thm:main-BSI-free-SS}.

\subsection{Type $B$ $N$-Ish arrangements}
\label{subsec:BN-Ish}
\quad
Let $N = (N_{2}, \dots, N_{\ell})$ be a tuple of finite sets $N_{i} \subseteq \mathbb{Z}$ (not necessarily of the form $[a_i,b_i]$). 
Abe, Suyama, and the second author \cite{AST17} defined the \tbf{$N$-Ish arrangement} $\A(N)$ as a generalization of the Ish arrangement $\Ish(A_{\ell-1}) $ (or more generally, of the Ish arrangements $  \A^{p,\ell}_\ell(m)$ for $0 \le p \le \ell$ in the last column of the type $A$ Shi descendant matrix).
The $N$-Ish arrangement $\A(N)$ consists of the following hyperplanes: 
\begin{align*}
x_{i}-x_{j} &= 0 \quad (1 \leq i < j \leq \ell), \\
x_{1}-x_{i} &= N_{i} \quad (2 \leq i \leq \ell).  
\end{align*}

It is shown that freeness and supersolvability of the cone over $\A(N)$ are synonyms. 

\begin{theorem}[{\cite[Theorems 1.3 and 1.4]{AST17}}]\label{thm:AN-Ish}
The following are equivalent: 
\begin{enumerate}[(1)]
\item $N$ is a \tbf{nest}. 
Namely, there exists a permutation $w$ of $\{2, \dots, \ell\}$ such that $N_{w(i)} \subseteq N_{w(i-1)}$ for every $3 \le  i \leq \ell$. 
\item  The cone $\tbf{c}\A(N)$ is supersolvable.
\item The cone $\tbf{c}\A(N)$  is  free.
\end{enumerate}
In this case, the exponents of  $\tbf{c}\A(N)$ are given by 
$$\exp(\tbf{c}\A(N)) = \{0,1\} \cup \{| N_{w(i)}| + i-2\}_{i=2}^\ell,$$ 
where  $w$ is any permutation of $\{2, \dots, \ell\}$ such that $N_{w(i)} \subseteq N_{w(i-1)}$ for every $3 \le  i \leq \ell$.
\end{theorem}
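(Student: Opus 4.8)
The plan is to establish the cyclic chain of implications $(1)\Rightarrow(2)\Rightarrow(3)\Rightarrow(1)$, recording the exponents along the way. The step $(2)\Rightarrow(3)$ is the classical fact that every supersolvable arrangement is free, so nothing is needed there; the content lies in $(1)\Rightarrow(2)$ (a construction) and in $(3)\Rightarrow(1)$ (a freeness obstruction).

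For $(1)\Rightarrow(2)$ I first pass to a convenient coordinate system on the cone. Writing $y_i=x_1-x_i$ for $2\le i\le\ell$ and keeping $x_1,z$, one sees that all defining forms of $\cc\A(N)$ involve only $y_2,\dots,y_\ell,z$: the braid forms become $y_i=0$ and $y_i-y_j=0$, the Ish forms become $y_i=nz$ $(n\in N_i)$, and the cone hyperplane is $z=0$. Hence $\cc\A(N)$ is the product of the $x_1$-line (contributing the exponent $0$) with an essential arrangement $\D$ in the remaining variables, and it suffices to show $\D$ is supersolvable. Relabelling $2,\dots,\ell$ by the permutation $w$, I may assume $N_2\supseteq N_3\supseteq\cdots\supseteq N_\ell$; this is a symmetry of $\D$ since only $x_1$ is distinguished. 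I would then build a modular chain by adjoining the coordinates in the order $z,y_2,\dots,y_\ell$ and applying the modular coatom technique (Proposition~\ref{prop:modular coatom}) at each stage. The key point is that when $y_j$ is adjoined, every coincidence among the forms $y_j=0$, $y_j=y_i$ $(i<j)$ and $y_j=nz$ $(n\in N_j)$ already occurs on a hyperplane present at the previous stage, precisely because the nest condition gives $N_j\subseteq N_i$ for $i<j$, so each needed form $y_i=nz$ with $n\in N_j\subseteq N_i$ is already in $\D$; this is exactly the modularity (equivalently, fibration) condition. Counting, at each stage, the hyperplanes through the new modular coatom then yields the stated exponents.

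The implication $(3)\Rightarrow(1)$ is the crux, and I would prove its contrapositive. If $N$ is not a nest then two of the sets, say $N_i,N_j$, are incomparable, so I may pick $a\in N_i\setminus N_j$ and $b\in N_j\setminus N_i$ (necessarily $a\neq b$, and $a,b\neq 0$ since we may assume $0\notin N_i$, the hyperplane $x_1-x_i=0$ being the braid hyperplane). Consider the rank-$3$ flat $X_0=\{z=0,\ y_i=0,\ y_j=0\}$ of $\D$. A direct check shows that the localization $\D_{X_0}$ consists exactly of $z=0$, the lines $y_i=nz$ $(n\in A)$, the lines $y_j=nz$ $(n\in B)$, and the diagonal $y_i=y_j$, where $A=N_i\cup\{0\}$ and $B=N_j\cup\{0\}$ remain incomparable; that is, $\D_{X_0}$ is the cone over a grid-with-diagonal arrangement $\G$ in two affine variables. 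Since localizations of free arrangements are free, it suffices to show $\G$ is not free.

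Here lies the main obstacle: the characteristic polynomial of $\G$ can factor over $\Z$ even when $A,B$ are incomparable (e.g. $A=\{0,1\}$, $B=\{0,2,3,4,5\}$ gives $(t-1)(t-4)^2$), so freeness cannot be excluded by Terao factorization and a finer invariant is required. My plan is to apply Yoshinaga's rank-three freeness criterion: the Ziegler restriction of $\G$ along $z=0$ is the rank-$2$ multiarrangement supported on the three concurrent lines $u=0$, $v=0$, $u=v$ with multiplicities $(|A|,|B|,1)$, whose exponents are given by the known formula for triples of lines. A short computation comparing the product of these Ziegler exponents with $b_2(\G)=|A||B|+|A|+|B|-|A\cap B|$ (the quadratic coefficient of $\chi(\G,t)$) shows that the two agree if and only if $A\subseteq B$ or $B\subseteq A$. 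Incomparability therefore forces $\G$, hence $\D$ and $\cc\A(N)$, to be non-free, completing the contrapositive; the exponent formula in the free (equivalently supersolvable) case is the one read off in $(1)\Rightarrow(2)$.
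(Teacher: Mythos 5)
The paper contains no proof of this statement --- it is quoted verbatim from \cite[Theorems 1.3 and 1.4]{AST17} --- so there is no internal argument to compare against line by line. Your proposal is, however, sound, and it is essentially the type-$A$ mirror of the strategy the paper itself uses for its type-$B$ analogue (Theorem \ref{thm:N-Ish-B-free-SS} via Proposition \ref{prop:N-l=2}): sufficiency of the nest condition by exhibiting a chain of modular coatoms, $(2)\Rightarrow(3)$ by Theorem \ref{thm:exp-ss}, and necessity by localizing at a rank-$3$ flat (legitimate by Theorem \ref{thm:localization-ss-free}) and comparing the characteristic polynomial with the Ziegler restriction exponents through Yoshinaga's criterion (Theorem \ref{thm:Yo-3arr}). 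Your computations check out: for the localized cone with hyperplanes $y_i=nz$ $(n\in A)$, $y_j=nz$ $(n\in B)$, $y_i=y_j$, $z=0$, the quadratic coefficient of the deconed characteristic polynomial is $|A||B|+|A|+|B|-|A\cap B|$; the Ziegler restriction along $z=0$ carries multiplicities $(|A|,|B|,1)$ on three concurrent lines, with exponents $\{\max(|A|,|B|),\,\min(|A|,|B|)+1\}$ by the standard rank-$2$ formula; and $\max\cdot(\min+1)$ equals the quadratic coefficient precisely when $|A\cap B|=\min(|A|,|B|)$, i.e.\ when $A,B$ are comparable. Your diagnosis that Terao factorization alone cannot finish the job is also correct (your example $A=\{0,1\}$, $B=\{0,2,3,4,5\}$ with $\chi=(t-1)(t-4)^2$ checks out, while the Ziegler exponents are $\{5,3\}$). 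The one genuine difference from the cited source is that \cite{AST17} additionally produces an explicit derivation basis in the free case (their Theorem 1.4), which your argument does not and need not supply.

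One caveat deserves attention, inherited from the conventions of the quoted statement. As printed, the definition of $\A(N)$ includes the braid hyperplanes $x_1-x_j=0$ for all $j$, so the arrangement depends only on the augmented sets $N_j\cup\{0\}$, whereas condition (1) does not: for $N=(\{0,1\},\{1,2\})$ the tuple is not a nest, yet $\A(N)=\A((\{1\},\{1,2\}))$ and the latter tuple is a nest, so the cone is supersolvable --- the literal implication $(3)\Rightarrow(1)$ fails. The statement is correct under the convention of \cite{AST17}, or after normalizing so that all $N_j$ contain $0$ (compare the ``centered'' hypothesis the paper imposes in Theorem \ref{thm:N-Ish-B-free-SS} for exactly this reason). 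Your step ``we may assume $0\notin N_i$'' silently performs such a normalization: it leaves the arrangement unchanged but can change whether (1) holds, so strictly speaking your cycle of implications proves the normalized statement (freeness $\Leftrightarrow$ supersolvability $\Leftrightarrow$ the tuple $(N_j\cup\{0\})_j$ is a nest). You should flag this in one sentence; with that convention made explicit, your argument closes correctly, and under the \cite{AST17} convention it goes through verbatim with $A=N_i$, $B=N_j$ and no augmentation needed in the localization.
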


We now introduce the notion of \tbf{type $B$ $N$-Ish arrangement} as a generalization of the Ish arrangements $  \B^{p,\ell}_\ell(m)$ for $0 \le p \le \ell$ in the last column of the type $B$ Shi descendant matrix (see \eqref{eq:lastcol}) by extending the weights of the coordinate hyperplanes.
\begin{definition}
\label{def:N-Ish-B}
Let $N = (N_1, \ldots, N_\ell)$ be an $\ell$-tuple of finite sets $N_i\subseteq\Z$. 
The  \tbf{type $B$ $N$-Ish arrangement} $\B(N)$ is the arrangement  in $ \mathbb{R}^{\ell} $
consisting of the following hyperplanes:
\begin{align*}
x_{i}\pm x_{j} &= 0\quad  (1 \leq i < j \leq \ell), \\
x_i&=  N_i \quad (1 \le  i \leq \ell).
\end{align*}
\end{definition}

Some example of the type $B$ $N$-Ish arrangements already appeared in literature, e.g., it is used by Ziegler \cite[Proposition 10]{Z89} to show that the exponents of \tbf{free multiarrangements} (see \S\ref{subsec:multi}) are in general not combinatorial.
To study the freeness and supersolvability of  $\B(N)$, we consider some special conditions on tuples.
\begin{definition}
\label{def:N-cond}
Let $N = (N_1, \ldots, N_\ell)$ be an $\ell$-tuple of finite sets $N_i\subseteq\Z$. The tuple $N$ is said to be
 \begin{enumerate}[(1)]
\item  \tbf{centered} if $0 \in N_i$ for each $1 \le  i \leq \ell$, 
\item  \tbf{uneven} if for any $1 \leq i < j \leq \ell$ either  $|N_i| \ne |N_j|$ or $|N_i| = |N_j|$ is an odd integer, otherwise it is  called \tbf{even} (i.e., there exist $1 \leq i < j \leq \ell$ such that $|N_i| = |N_j|$ is an even number), 
\item  \tbf{nonnegative} if $N_i \subseteq \Z_{\ge0}$ consists of only nonnegative integers  for each $1 \le  i \leq \ell$, 
\item  a \tbf{signed nest} if there exists a permutation $w$ of $[\ell]$ such that $\pm N_{w(i)} \subseteq N_{w(i-1)}$ (equivalently, $N_{w(i)} \cup (-N_{w(i)} )\subseteq N_{w(i-1)} $) for every $2 \le  i \leq \ell$. 
Thus $N$ is a signed nest if and only if the $2$-tuple $(N_i, N_j)$ is a signed nest  for all $1 \leq i < j \leq \ell$.
\end{enumerate}
\end{definition}

Our second main result is an analog of Theorem \ref{thm:AN-Ish}.

\begin{theorem}
 \label{thm:N-Ish-B-free-SS}
 Let $N = (N_1, \ldots, N_\ell)$ be  an $\ell$-tuple  of finite sets $N_i\subseteq \Z$ satisfying two conditions: (a)  $N$ is centered and (b) $N$ is uneven.
 The following are equivalent:
 \begin{enumerate}[(1)]
\item $N$ is a signed nest.
\item  The cone $\tbf{c}\B(N)$ is supersolvable.
\item The cone $\tbf{c}\B(N)$  is  free.
\end{enumerate}
In this case, the exponents of  $\tbf{c}\B(N)$ are given by 
$$\exp(\tbf{c}\B(N)) = \{1\} \cup \{| N_{w(i)}| + 2(i-1)\}_{i=1}^\ell,$$ 
where  $w$ is any permutation of $[\ell]$ such that $\pm N_{w(i)} \subseteq N_{w(i-1)}$ for every $2 \le  i \leq \ell$.
 \end{theorem}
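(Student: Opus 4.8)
The plan is to prove the cycle $(1)\Rightarrow(2)\Rightarrow(3)\Rightarrow(1)$, in the spirit of the type $A$ result (Theorem \ref{thm:AN-Ish}) but with the signed structure of $\B(N)$ doing the essential work. The implication $(2)\Rightarrow(3)$ is the classical theorem that supersolvable arrangements are free, so nothing is needed there; as with the Ish arrangements in the main theorem, the ``toy-model'' direction $(1)\Rightarrow(2)$ will be clean, while the converse $(3)\Rightarrow(1)$ is the hard part.

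For $(1)\Rightarrow(2)$ I would induct on $\ell$ using the modular coatom technique (Proposition \ref{prop:modular coatom}). After relabeling by the permutation $w$ witnessing the signed nest, assume $\pm N_i\subseteq N_{i-1}$ for $2\le i\le\ell$, so that $N_\ell\cup(-N_\ell)\subseteq N_i$ for every $i<\ell$. Take $X$ to be the $x_\ell$-axis $\{x_1=\cdots=x_{\ell-1}=z=0\}$ in $\cc\B(N)\subseteq\R^{\ell+1}$, which is an intersection of the hyperplanes $x_i=0$ $(i<\ell)$ and $z=0$ (available since $N$ is centered), hence a coatom. Its localization $(\cc\B(N))_X$ is exactly the set of hyperplanes not involving $x_\ell$, namely $\cc\B(N')$ with $N'=(N_1,\dots,N_{\ell-1})$ — again centered, uneven, and a signed nest — while the $2(\ell-1)+|N_\ell|$ hyperplanes $x_i\pm x_\ell=0$ and $x_\ell=N_\ell z$ are precisely those missing $X$. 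To see $X$ is modular I would check the pencil criterion: for any two hyperplanes $H_1\neq H_2$ missing $X$ there is $H_3\in(\cc\B(N))_X$ with $H_1\cap H_2\subseteq H_3$. Pairing two of the $x_i\pm x_\ell=0$ produces some $x_i\pm x_j=0$, or (using centeredness) some $x_i=0$; the decisive cases pair $x_i\pm x_\ell=0$ with $x_\ell=nz$ and give $x_i=\pm nz$, which lies in $(\cc\B(N))_X$ exactly because $\pm n\in\pm N_\ell\subseteq N_i$ — this is where the signed nest hypothesis enters. By induction $(\cc\B(N))_X$ is supersolvable with $\exp=\{1\}\cup\{|N_i|+2(i-1)\}_{i=1}^{\ell-1}$, and the modular coatom technique appends the exponent $2(\ell-1)+|N_\ell|=|N_\ell|+2(\ell-1)$, yielding supersolvability and the stated exponents; the base case $\ell=1$ is a pencil.

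For $(3)\Rightarrow(1)$ I argue contrapositively. If $N$ is not a signed nest then some pair $(N_i,N_j)$ fails, and localizing $\cc\B(N)$ at the flat $\{x_k=0:k\neq i,j\}$ (a flat because $N$ is centered) realizes $(\cc\B(N))_Y$ as the product of $\cc\B(N_i,N_j)$ with a Boolean arrangement in the remaining coordinates. Since localizations of free arrangements are free and a product is free iff its factors are, it suffices to show the rank-three arrangement $\cc\B(N_i,N_j)$ is not free. Here I would compute, by the finite field method,
\[
\chi_{\B(N_i,N_j)}(t)=t^2-(|N_i|+|N_j|+2)\,t+|N_i||N_j|+|N_i\cup N_j|+|N_i\cup(-N_j)|,
\]
and then invoke Yoshinaga's rank-three criterion: $\cc\B(N_i,N_j)$ is free iff this quadratic factors over $\Z_{\ge0}$ with roots equal to the exponents of the Ziegler restriction of $\cc\B(N_i,N_j)$ to $\{z=0\}$. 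That restriction is the $B_2$ multiarrangement on $x_i=\pm x_j$, $x_i=0$, $x_j=0$ with multiplicities $(1,1,|N_i|,|N_j|)$, whose exponents are determined by $(|N_i|,|N_j|)$ for this fixed line configuration. Since the linear coefficient already equals the sum of those exponents, freeness reduces to the constant term above equalling their product, and a direct comparison (using that $0\in N_i\cap N_j$ and $0\in N_i\cap(-N_j)$, again by centeredness) shows this happens precisely when $(N_i,N_j)$ is a signed nest, provided $(N_i,N_j)$ is uneven.

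The main obstacle is exactly this rank-three step: one must pin down the exponents of the $B_2$ Ziegler multiarrangement with multiplicities $(1,1,|N_i|,|N_j|)$ and track how the quantities $|N_i\cup N_j|$ and $|N_i\cup(-N_j)|$ in the constant term encode the signed nest condition. The even case $|N_i|=|N_j|$ even is a genuine exception — it underlies Ziegler's example of a free multiarrangement whose exponents are not combinatorial — and is precisely what hypothesis (b) excludes; centeredness is used both to realize the localization as a product and to control the intersections $N_i\cap(\pm N_j)$ in the count, so both hypotheses (a) and (b) are essential exactly at this final rank-three analysis.
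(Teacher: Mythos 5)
Your architecture coincides with the paper's own proof: $(1)\Rightarrow(2)$ by peeling off the innermost set of the signed nest as a modular coatom (this is exactly Proposition \ref{prop:isolated} applied to an isolated vertex, followed by Proposition \ref{prop:modular coatom}), and $(3)\Rightarrow(1)$ by localizing to a single pair $(N_i,N_j)$ and settling the resulting rank-three arrangement via Yoshinaga's criterion (Theorem \ref{thm:Yo-3arr}) applied to the Ziegler restriction to $z=0$; your characteristic polynomial $t^2-(n_i+n_j+2)t+n_in_j+|N_i\cup N_j|+|N_i\cup(-N_j)|$ agrees with the paper's deletion--restriction computation in Proposition \ref{prop:N-l=2}. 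However, two things need repair. First, your localizing flat is wrong: the subspace $\{x_k=0 : k\neq i,j\}$ leaves $x_i$, $x_j$ \emph{and} $z$ unconstrained, so a hyperplane $x_k=nz$ with $n\neq 0$ does not contain it; the localization there is just a copy of $\Cox(B_{\ell-2})$ in the complementary coordinates and carries no information about $(N_i,N_j)$ at all, so your claimed product decomposition with $\cc\B(N_i,N_j)$ as a factor fails. The correct flat is $X=\{z=x_i=x_j=0\}$, as in the paper, and then $(\cc\B(N))_X$ is literally $\cc\B(N_i,N_j)$ (up to an empty-arrangement factor), with no Boolean factor needed.

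Second, and more seriously, the step you yourself label ``the main obstacle'' is the actual mathematical content of $(3)\Rightarrow(1)$, and you never carry it out: one must \emph{prove} that the Ziegler restriction, i.e.\ the $B_2$ multiarrangement with defining polynomial $x_i^{n_i}x_j^{n_j}(x_i-x_j)(x_i+x_j)$, has exponents $\{n_i,\,n_j+2\}$ when $n_i\ge n_j$ and the pair is uneven. Merely asserting the exponents are ``determined by $(n_i,n_j)$'' does not allow you to evaluate the product $d_2d_3$ against your constant term. The paper supplies this by exhibiting the explicit basis $\theta_1=x_i^{n_i}\,\partial/\partial x_i+x_i^{\,n_i-n_j-\tau(n_j)}x_j^{\,n_j+\tau(n_j)}\,\partial/\partial x_j$ and $\theta_2=x_j^{n_j}(x_i^2-x_j^2)\,\partial/\partial x_j$, where $\tau(n_j)\in\{0,1\}$ corrects parity; unevenness is precisely what guarantees $n_i-n_j-\tau(n_j)\ge 0$, and when $n_i=n_j=n$ is even the exponents jump to $\{n+1,n+1\}$ (Proposition \ref{prop:N-l=2-relax}), which is why the argument breaks there -- so your diagnosis of where hypothesis (b) enters is correct, but the computation must be given, not flagged. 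Once $\{d_2,d_3\}=\{n_i,n_j+2\}$ is established, freeness forces $2n_i=|N_i\cup N_j|+|N_i\cup(-N_j)|$, i.e.\ $\pm N_j\subseteq N_i$, exactly as in the paper. (A minor omission in your $(1)\Rightarrow(2)$ check: the pair $x_\ell=nz$, $x_\ell=n'z$ with $n\neq n'$ must also be handled; its intersection lies in $z=0$, which belongs to the coatom.)
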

 
 In particular, the arrangements $\tbf{c}\B^{p,\ell}_\ell(m)$ for $0 \le p \le \ell$ in the last column of the Shi descendant matrix given in \eqref{eq:lastcol} are  supersolvable since they are $N$-Ish arrangements whose associated tuples are signed nests. 
 
 We remark that without centeredness or unevenness the tuple $N$ being a signed nest is not enough to guarantee the freeness or supersolvability or equivalence between these properties of $\tbf{c}\B(N)$ (see Example \ref{ex:NI-no}). 
 Thus unlike type $A$, freeness and supersolvability of $\tbf{c}\B(N)$ are not synonyms. 
 A full characterization of freeness or supersolvability for a type $B$ $N$-Ish arrangement remains open to us.
 
If a tuple $N$ was supposed to be centered and nonnegative, we would give a characterization for freeness and supersolvability of $\tbf{c}\B(N)$ without the need of unevenness (see Theorem \ref{thm:N-Ish-B-free-SS-relax}). 
This characterization will play an important role in the proof of Theorem \ref{thm:SI-sym-G} in the next subsection.

 In the type $A$ case, an explicit basis for the \textbf{module of logarithmic derivations} (Definition \ref{def:free-arr}) of a free (= supersolvable) $N$-Ish arrangement is known \cite[Theorem 1.4]{AST17}. 
 The situation is more complicated in type $B$.
 We could give an explicit basis for $\tbf{c}\B(N)$ when $N_i=[-m_i,m_i]$ with $ m_1 \ge m_2 \ge \cdots \ge m_\ell \ge 0$ (Theorem \ref{thm:basis}). 
This $\tbf{c}\B(N)$ also satisfies all conditions in Theorem \ref{thm:N-Ish-B-free-SS}, and has the top-right descendant $ \tbf{c}\B^{0,\ell}_\ell(m)$ and $\Cox(B_\ell)$ as its specializations. 
It would be interesting to find a basis for other free (or just supersolvable) type $B$ $N$-Ish arrangements.

\subsection{Type $B$ deleted Shi and Ish arrangements}
\label{subsec:Bdelete}
\quad

For a loopless digraph $G=([\ell],E_G)$ on $[\ell]$ with edge set $E_G \subseteq \{(i,j) \mid 1 \leq i < j \leq \ell\}$, Athanasiadis \cite{A96, Atha98} defined the \tbf{deleted Shi arrangement} $\Shi(G)$ that interpolates between $\Cox(A_{\ell-1})$ and  $\Shi(A_{\ell-1})$, and characterized completely its freeness and supersolvability. 
Using the following digraphic version of the combinatorial symmetry  \eqref{eq:CS}
\begin{equation*}
\label{eq:digraph-CS}
\{x_{i}-x_{j}= 1\} \longleftrightarrow \{ x_{1}-x_{j} = i\} \quad ((i,j) \in E_G),
\end{equation*}
Armstrong and Rhoades  \cite{AR12} defined the \tbf{deleted Ish arrangement} $\Ish(G)$ that interpolates between  $\Cox(A_{\ell-1})$ and $\Ish(A_{\ell-1})$:
\begin{equation}
\label{eq:GSI}
\begin{aligned}
  \Shi(G) & \coloneqq \Cox(A_\ell) \cup  \Set{ x_{i}-x_{j}= 1 | (i,j) \in E_G }, \\
 \Ish(G) & \coloneqq \Cox(A_\ell) \cup \Set{ x_{1}-x_{j} = i | (i,j) \in E_G }.
\end{aligned}
\end{equation}

The following analog of Theorems \ref{Shi/Ish characteristic polynomial}, \ref{Shi/Ish freeness} and \ref{Shi/Ish chamber} for the deleted Shi and Ish arrangements holds.

\begin{theorem}[{\cite[Main Theorem]{AR12}, \cite[Theorem 6]{LRW14}, \cite[Corollary 5.3]{AST17}}]
\label{thm:GSI}
The following Shi/Ish dualities hold: 
\begin{enumerate}[(1)]
\item The arrangements $\Shi(G)$ and $\Ish(G)$ have the same characteristic polynomial. 
\item $\tbf{c} \Shi(G)$ is free $\Leftrightarrow \cc\Ish(G)$ is free $\Leftrightarrow \cc\Ish(G)$ is supersolvable. 
\item The arrangements $\Shi(G)$ and $\Ish(G)$ have the same number of regions with $c$ ceilings and $d$ degrees of freedom for any nonnegative integers $c,d$.
\end{enumerate}
\end{theorem}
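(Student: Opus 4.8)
The plan is to prove the three dualities in turn, with the Ish side playing the role of the easy toy model. The key reduction is to identify the digraph $G=([\ell],E_G)$ with the tuple $N=(N_2,\dots,N_\ell)$ defined by $N_j\coloneqq\Set{i | (i,j)\in E_G}$; under this identification $\Ish(G)$ is exactly the $N$-Ish arrangement $\A(N)$ of \S\ref{subsec:BN-Ish}, whose freeness, supersolvability and exponents are governed by Theorem \ref{thm:AN-Ish}. I would use the finite field method for (1), this identification together with Athanasiadis' graph criterion for (2), and an explicit statistic-preserving bijection on regions for (3).

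For (1) I would apply the finite field method. As both arrangements are defined over $\Z$, for all large primes $q$ the value $\chi_{\Shi(G)}(q)$ (resp.\ $\chi_{\Ish(G)}(q)$) equals the number of $(a_1,\dots,a_{\ell+1})\in\mathbb{F}_q^{\,\ell+1}$ with pairwise distinct coordinates satisfying in addition $a_i-a_j\neq 1$ for all $(i,j)\in E_G$ (resp.\ $a_1-a_j\neq i$ for all $(i,j)\in E_G$). On the Ish side the added conditions involve only $a_1$, so fixing $a_1$ decouples them and the count is easy to assemble; the real work is the identity that this equals the Shi count. I would establish it either by a direct bijection between the two point sets guided by the symmetry \eqref{eq:CS}, or by induction on $|E_G|$: delete an edge and compare the resulting deletion and restriction contributions on the two sides. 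By Zaslavsky's theorem the numbers of regions and of bounded regions are determined by $\chi$, so (1) already gives the $x=y=1$ specialization of (3).

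For (2), the equivalence $\cc\Ish(G)$ free $\Leftrightarrow$ $\cc\Ish(G)$ supersolvable, and the criterion that this holds iff $N$ is a nest, is immediate from Theorem \ref{thm:AN-Ish} applied to $\A(N)$. It remains to show that $\cc\Shi(G)$ is free iff $\cc\Ish(G)$ is. I would invoke Athanasiadis' characterization of the freeness (equivalently supersolvability) of $\cc\Shi(G)$ \cite{Atha98} as a condition on the digraph $G$, and check that it coincides with the nest condition on $N$; this is a purely combinatorial translation between the edge set of $G$ and the level sets $N_j$, amounting to matching the respective forbidden configurations. A second, more structural route---closer to the viewpoint of this paper---is to present $\Shi(G)$ and $\Ish(G)$ as $\psi$-digraphic arrangements joined by a chain of coking eliminations \cite{ATT21}: since coking elimination preserves freeness, the free $\Leftrightarrow$ free equivalence follows, and supersolvability of the Ish end is again supplied by Theorem \ref{thm:AN-Ish}.

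Part (3) is the most demanding and I expect it to be the main obstacle. Equality of the plain region counts is free from (1), but matching the full bivariate distribution of the statistics ``$c$ ceilings'' and ``$d$ degrees of freedom'' requires an explicit bijection $\Phi$ between the regions of $\Shi(G)$ and those of $\Ish(G)$ that transports the pair $(c,d)$. I would construct $\Phi$ by encoding a Shi region through the relative order of its coordinates together with the side of each hyperplane $x_i-x_j=1$ on which it lies, and translating this data, via the correspondence \eqref{eq:GSI} dictated by \eqref{eq:CS}, into the ceiling-and-position data of an Ish region. The delicate point is to verify at once that $\Phi$ is bijective, that ceilings correspond to ceilings, and that the degrees of freedom are preserved; the last is the subtlest, since this statistic is invisible to the characteristic polynomial and must be followed through the geometry of the recession cones. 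This refined bijection is the content of \cite{LRW14}, and is the step I would budget the most careful bookkeeping for.
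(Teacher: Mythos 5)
First, note that the paper contains no proof of Theorem~\ref{thm:GSI} at all: it is quoted background, with part (1) cited to \cite{AR12}, part (2) to \cite[Corollary 5.3]{AST17}, and part (3) to \cite[Theorem 6]{LRW14}. So your proposal can only be measured against those sources and against internal consistency. Your treatment of (2) is essentially the genuine argument of \cite{AST17}: identify $\Ish(G)$ with the $N$-Ish arrangement $\A(N)$ for $N_j=\{i\mid (i,j)\in E_G\}$, get free $\Leftrightarrow$ supersolvable $\Leftrightarrow$ nest from Theorem~\ref{thm:AN-Ish}, and match the nest condition against Athanasiadis' criterion for freeness of $\cc\Shi(G)$; this is also precisely the template the present paper reuses in type $B$ (Proposition~\ref{prop:IG-NI} and Theorems~\ref{thm:IG}, \ref{thm:SG}). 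Two caveats there. Your parenthetical ``freeness (equivalently supersolvability) of $\cc\Shi(G)$'' is false: for $G$ the complete digraph, $\cc\Shi(A_{\ell-1})$ is free but not supersolvable for $\ell\ge 3$ (this asymmetry is the whole point of the Shi/Ish contrast in Theorem~\ref{Shi/Ish freeness}); only the freeness characterization may be invoked. And your ``more structural route'' via coking elimination does not exist for general $G$: the eliminations of \cite{ATT21} require the eliminated vertex to be a coking with interval weights, which holds along the interpolation $\Hh_\ell^k$ but fails for an arbitrary edge set $E_G$, so there is no elimination chain joining $\Shi(G)$ to $\Ish(G)$ in general.

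The genuine gap is in (1). The finite field framing is correct, and the Ish-side count does decouple after fixing $a_1$, but neither of your proposed completions of the key identity survives scrutiny. The induction on $|E_G|$ via the deletion--restriction formula~\ref{thm:DR} does not close within the class: deletion behaves well ($\Shi(G)\setminus\{H_e\}=\Shi(G\setminus e)$, likewise for Ish), but the two restrictions --- $\Shi(G)$ restricted to $x_i-x_j=1$ versus $\Ish(G)$ restricted to $x_1-x_j=i$ --- are deformations of a smaller braid arrangement with new translation patterns (hyperplanes collide, constants $0,1,2,\dots$ appear) and are not deleted Shi or Ish arrangements of any digraph; the inductive hypothesis says nothing about them, and proving that these two restrictions have equal characteristic polynomials is exactly as hard as the original claim. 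Likewise, no simple bijection between the two $\mathbb{F}_q$-point sets ``guided by \eqref{eq:CS}'' is available; the combinatorial symmetry is a set bijection of hyperplanes, not of complements, and \cite{AR12} in fact establishes (1) by computing and comparing the two counts rather than by such a bijection. Finally, for (3) you correctly identify the bivariate refinement as the hard content and attribute it to \cite{LRW14}, but your paragraph is a plan rather than a proof: the bijection $\Phi$ preserving ceilings and degrees of freedom is never constructed, and the degrees-of-freedom statistic is precisely the part that no characteristic-polynomial or naive order-type encoding controls. As it stands, your proposal proves nothing beyond what it cites for (1) and (3), while (2) is sound modulo the two corrections above.
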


Recall $\Shi(B_{\ell})$ and $\Ish(B_{\ell})$  from Definition \ref{def:Shi-Ish-B}. 
One may wonder whether an analog of Theorem \ref{thm:GSI} holds for type $B$. 
Unfortunately, it is not always the case. 
The subarrangement $\Shi(B_{3})\setminus\{x_{1}=1\}$ of $\Shi(B_{3})$ has characteristic polynomial  $t^{3}-17t^{2}+98t-191$. 
However, there is no subarrangement of $\Ish(B_{3})$ containing $\Cox(B_{3})$ such that the characteristic polynomial coincides with this polynomial. 

In order to have a valid analogy, we need to consider ``partial" deleted versions of $ \Shi(B_\ell)$ and $ \Ish(B_\ell)$. 
A natural idea is to consider an analogous digraphic version of \eqref{eq:RCS}.
In that case it is necessary to consider digraphs with loops.

In the remainder of this subsection, assume $G=([\ell],E_G,L_G)$ is a digraph on $[\ell]$ with loop set $L_G\subseteq[\ell]$ and edge set $E_G \subseteq \{(i,j) \mid 1 \leq i < j \leq \ell\}$. 
\begin{definition}
\label{def:Del-SI}
Define arrangements $\mcS(G)$
 and $\I(G)$ in $\R^\ell$ by
\begin{align*}
\mcS(G) & \coloneqq \Cox(B_\ell)  \cup  \Set{ x_{i} = 1 | i \in L_G} \cup  \Set{ x_{i}-x_{j}= 1 | (i,j) \in E_G}, \\
\I(G) & \coloneqq \Cox(B_\ell)  \cup  \Set{ x_{i} = 1 | i \in L_G} \cup  \Set{ x_{i} = \ell+2-j | (i,j) \in E_G}.
\end{align*}
\end{definition}

The arrangements above are ``partial" deleted versions of $ \Shi(B_\ell)$ and $ \Ish(B_\ell)$ in the sense that 
$\mcS(G)$ (resp., $ \I(G) $) interpolates between $\Cox(B_\ell)$ and  $\Shi^-(B_\ell)$ (resp., $\Ish^-(B_\ell)$) defined as below
\begin{align*}
 & \Cox(B_\ell) \subseteq  \mcS(G)   \subseteq \Shi^-(B_\ell) \coloneqq \Shi(B_\ell) \setminus \Set{ x_{i}+ x_{j}= 1 | 1 \leq i < j \leq \ell}, \\
 & \Cox(B_\ell) \subseteq \I(G)   \subseteq \Ish^-(B_\ell) \coloneqq \Ish(B_\ell)  \setminus \Set{ x_{i} =  -(\ell+1-j )| 1 \leq i < j \leq \ell}.
\end{align*}

Our third main result is an analog of Theorem \ref{thm:GSI}(2). 
\begin{theorem}
 \label{thm:SI-sym-G}
The following Shi/Ish duality between the cones over $\mcS(G)$ and $\I(G)$ holds:
$$\tbf{c} \mcS(G) \text{ is free } \Leftrightarrow \tbf{c}\mcS(G) \text{ is supersolvable } \Leftrightarrow \tbf{c} \I(G) \text{ is free } \Leftrightarrow \tbf{c}\I(G) \text{ is supersolvable}.$$
Furthermore, any of the above conditions occurs if and only if $G$ has one of the following forms: 
 \begin{enumerate}[(a)]
\item $|E_G|\ge1$ and  all the edges in $G$ have the same initial vertex with possible loop at this vertex, and there are no loops at any other vertices, 
\item $|E_G|\ge2$ and all the edges in $G$ have the same terminal vertex, and there are no loops at any vertices (including  the terminal vertex),
\item $G$ has no edges but loops at some vertices. 
\end{enumerate}
In this case, the exponents of  $\tbf{c} \mcS(G)$ and $\tbf{c}\I(G)$ are both given by 
$$\exp(\tbf{c} \mcS(G)) = \exp(\tbf{c}\I(G)) = \{|E_G|+|L_G|+1\} \cup \{2i-1\}_{i=1}^\ell.$$
 \end{theorem}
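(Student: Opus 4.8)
The plan is to reduce the Ish side to the type $B$ $N$-Ish machinery, translate the resulting nest criterion into the digraph forms (a)--(c), and then transport both the freeness equivalence and the exponents to the Shi side.

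\emph{Ish side.} First I would observe that $\I(G)$ is literally a type $B$ $N$-Ish arrangement $\B(N)$ in the sense of Definition \ref{def:N-Ish-B}: setting
\[
N_i = \{0\} \cup \Set{ 1 | i \in L_G} \cup \Set{ \ell+2-j | (i,j) \in E_G},
\]
one has $\I(G) = \B(N)$. Every deposited value is nonnegative and $0 \in N_i$, so $N$ is centered and nonnegative, and the relaxed characterization for such tuples (Theorem \ref{thm:N-Ish-B-free-SS-relax}) applies directly. This yields at once the equivalence ``$\cc\I(G)$ free $\Leftrightarrow \cc\I(G)$ supersolvable'', a combinatorial criterion on $N$, and the exponent formula. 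I would stress that the relaxed version is genuinely needed: in cases (b) and (c) the tuple contains two \emph{equal} sets of size two, hence is \emph{even}, so the unevenness hypothesis of Theorem \ref{thm:N-Ish-B-free-SS} is unavailable.

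\emph{Combinatorial translation.} The core of the Ish side is to match the nest criterion on $N$ with the trichotomy (a)--(c). An edge $(i,j)$ deposits the value $\ell+2-j \ge 2$ into $N_i$, a loop at $i$ deposits $1$, and every $N_i$ contains $0$. Two edges with distinct initial \emph{and} distinct terminal vertices therefore produce two incomparable sets $\{0,\text{value}\}$ with different values $\ge 2$, which the criterion forbids; likewise an edge together with a loop at a vertex carrying no edge-value produces an incomparable pair involving $\{0,1\}$. Excluding these obstructions forces either all edges to share an initial vertex with a loop allowed only there (case (a)), or $\ge 2$ edges to share a terminal vertex with no loops at all (case (b)), or no edges with only loops (case (c)); the threshold $|E_G|\ge 2$ in (b) and $|E_G|\ge 1$ in (a) fall out because a single edge is simultaneously ``same initial'' and ``same terminal'' and is absorbed into (a). Conversely each of (a)--(c) makes the nontrivial $N_i$ nest, and reading off $|N_{w(i)}|$ from the ordered tuple gives $\exp(\cc\I(G)) = \{|E_G|+|L_G|+1\} \cup \{2i-1\}_{i=1}^\ell$.

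\emph{Shi side.} To pass to $\mcS(G)$ I would realise both $\mcS(G)$ and $\I(G)$ as type $B$ $\psi$-digraphic arrangements attached to the loop-decorated digraph $G$, the loop hyperplanes $x_i=1$ being the fixed points of the symmetry \eqref{eq:RCS}, and invoke the freeness- and exponent-preserving coking-elimination mechanism of \S\ref{subsec:BDA} (Theorem \ref{thm:BSI-seq}). This supplies ``$\cc\mcS(G)$ free $\Leftrightarrow \cc\I(G)$ free'' together with identical exponents, so the non-freeness direction is inherited from the Ish side. It then remains to upgrade \emph{free} to \emph{supersolvable} for $\mcS(G)$ in the good cases, which I would do by an explicit modular filtration: in each of (a)--(c) one peels the coordinates in a suitable order so that the coordinate and braid hyperplanes added at each step form a modular coatom in the sense of Proposition \ref{prop:modular coatom}, the affine hyperplanes interacting with $\Cox(B_\ell)$ only through a single distinguished vertex (the common initial vertex, the common terminal vertex, or the loop-set). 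Since supersolvable trivially implies free, all four conditions then coincide.

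\emph{Main obstacle.} The delicate step is the Shi side, specifically verifying that coking elimination remains a valid addition--deletion sequence in the presence of loops and arbitrary deletions, so that the link between $\mcS(G)$ and $\I(G)$ is legitimate. Should that formalism not cover these arrangements cleanly, I would instead prove the hard direction by contraposition: when $G$ avoids (a)--(c) I would localise $\cc\mcS(G)$ at the flat determined by a forbidden pair of edges with distinct initial and terminal vertices and show the resulting small arrangement is not free, for instance by exhibiting a characteristic polynomial that fails to factor over $\Z$, contradicting Terao's factorization theorem. Either route reduces the theorem to the $N$-Ish characterization of the first paragraph plus the bookkeeping of the middle two.
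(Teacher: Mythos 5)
Your Ish-side reduction and the combinatorial translation coincide with the paper's own proof (Proposition \ref{prop:IG-NI} and Theorem \ref{thm:IG}), including your correct observation that the relaxed characterization of Theorem \ref{thm:N-Ish-B-free-SS-relax} is genuinely needed because cases (b) and (c) produce even tuples. The genuine problem is your primary route for the Shi side: there is no ``freeness- and exponent-preserving coking-elimination mechanism'' in type $B$ to invoke. Theorem \ref{thm:BSI-seq} only constructs set bijections via BCE along the rows of the Shi descendant matrix, with the actual freeness transfer carried by B-simplicial vertices and modular coatoms; the paper explicitly leaves open whether type-$B$ coking elimination preserves freeness and characteristic polynomial (end of \S\ref{sec:F-SS-BSI}). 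Moreover, BCE is only defined at a coking vertex with nonempty interval weights, while a general digraph $G$ with a partial edge set and loops admits no such vertex, and the correspondence \eqref{eq:RCS} relating $\mcS(G)$ to $\I(G)$ acts edge by edge rather than as a BCE at a coking. So that step, as stated, would fail.

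Your declared fallback, however, is exactly the paper's proof (Theorem \ref{thm:SG}). For the good cases (a)--(c) one shows that $\Cox(B_\ell)$ is a \emph{single} modular coatom of $\cc\mcS(G)$ --- no iterated peeling is needed, since $\Cox(B_\ell)$ is itself supersolvable and Proposition \ref{prop:modular coatom} then yields supersolvability together with the exponent $|E_G|+|L_G|+1$ on top of $\{2i-1\}_{i=1}^\ell$. For the converse one localizes $\cc\mcS(G)$ at coordinate flats of the form $z=x_{i_1}=\cdots=x_{i_k}=0$ realizing a forbidden induced subgraph from Figure \ref{fig:Obstructions}, and verifies (the paper does this by a SageMath computation, all obstructions having rank at most $4$) that the characteristic polynomial has a noninteger root; non-freeness of the localization then follows from Theorem \ref{thm:Factorization}, and non-freeness of $\cc\mcS(G)$ from Theorem \ref{thm:localization-ss-free}. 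Be aware that this finite non-freeness check must actually be carried out: it does not follow from the Ish-side analysis, since no intrinsic duality between $\mcS(G)$ and $\I(G)$ is established --- the theorem's four-way equivalence is obtained precisely by proving the same digraph characterization independently on each side. With the fallback substituted for the coking-elimination step, your outline matches the paper.
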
 
 
 In particular, the deleted arrangements $\mcS(G)$ and $\I(G)$ have the same characteristic polynomial when their associated cones are free. 
 Our calculation so far suggests the following analog of Theorem \ref{thm:GSI}(1). 
 \begin{conjecture}
\label{conj:same-CP}
 $\mcS(G)$ and $\I(G)$ have the same characteristic polynomial for any digraph $G$.
 \end{conjecture}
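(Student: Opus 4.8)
The plan is to prove the conjecture by the finite field method of Athanasiadis \cite{Atha98}: for every sufficiently large prime $q$, both $\chi(\mcS(G),q)$ and $\chi(\I(G),q)$ count the $\mathbb{F}_q$-points in the respective complements, so it suffices to show that the two point sets have equal cardinality. Both arrangements contain $\Cox(B_\ell)$ together with the common loop hyperplanes $\{x_i=1 : i\in L_G\}$, and differ only in their edge hyperplanes, which are interchanged by the symmetry $\{x_i-x_j=1\}\leftrightarrow\{x_i=\ell+2-j\}$ underlying Definition \ref{def:Del-SI}. I would treat the condition that $v=(v_1,\dots,v_\ell)$ avoids $\Cox(B_\ell)$, namely that the $v_i$ are nonzero and signed-distinct ($v_a\neq\pm v_b$ for $a\neq b$), as the ambient count, and expand the remaining loop/edge constraints by inclusion--exclusion. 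This reduces the conjecture to the following term-by-term statement: for every loop subset $L\subseteq L_G$ and edge subset $F\subseteq E_G$, the number of signed-distinct nonzero tuples satisfying the $\mcS(G)$-constraints $\{v_i=1:i\in L\}\cup\{v_i-v_j=1:(i,j)\in F\}$ equals the number satisfying the $\I(G)$-constraints $\{v_i=1:i\in L\}\cup\{v_i=\ell+2-j:(i,j)\in F\}$. Since the inclusion--exclusion signs $(-1)^{|L|+|F|}$ agree on both sides, these equalities sum to the desired identity.

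The next step is to verify these term-by-term equalities, where the type $B$ signed-distinctness is decisive. First I would show that the \emph{degenerate} configurations vanish simultaneously on both sides: a repeated loop, two $F$-edges sharing an initial or a terminal vertex, a loop coinciding with an edge's initial vertex, or any configuration forcing two coordinates to be equal or opposite, all violate $\Cox(B_\ell)$. On the $\I(G)$-side this is transparent, since the constraint values $1$ and $\ell+2-j\in[2,\ell]$ are all positive and pairwise non-opposite, so any clash pins one coordinate to two values or two coordinates to a common value; on the $\mcS(G)$-side it follows by propagating the offset relations $v_i=v_j+1$. For the surviving \emph{nondegenerate} configurations, both counts should equal the same falling product $\prod_{t}(q-(2t+1))$ obtained by extending the constrained coordinates to a full signed-distinct nonzero tuple. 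For a single edge this is immediate, as both $\{v_i=\ell+2-j\}$ and $\{v_i-v_j=1\}$ consume exactly one new signed line, leaving the common product $(q-3)(q-5)\cdots(q-2\ell+1)$.

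The hard part will be establishing the nondegenerate equality for arbitrary $F$, because the two sides organize the constrained coordinates very differently: on the $\I(G)$-side each selected edge simply pins one coordinate to a fixed value, whereas on the $\mcS(G)$-side the relations $v_i=v_j+1$ chain coordinates into forest components carrying integer offsets, and one must check both that the induced values stay signed-distinct and, more delicately, that the boundary coincidences cancel uniformly. Already for $f$ pairwise disjoint edges the $\I(G)$-count is $\prod_{t=f}^{\ell-1}(q-2t-1)$, while the $\mcS(G)$-count factors as $D_f\cdot\prod_{t=2f}^{\ell-1}(q-2t-1)$, where $D_f$ counts ordered $f$-tuples of pairwise signed-disjoint ``dominoes'' $(a,a+1)$; matching the two forces the clean identity $D_f=\prod_{t=f}^{2f-1}(q-2t-1)$, which is already nontrivial, since for $f=2$ the naive count $(q-3)(q-9)$ acquires a $+8$ boundary correction to become $(q-5)(q-7)$. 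Rather than a configuration-by-configuration correction analysis, I expect the robust route is to construct an explicit statistic-preserving bijection on $\mathbb{F}_q$-points that converts each offset-linked component on the $\mcS(G)$-side into independently pinned coordinates on the $\I(G)$-side, realizing the symmetry of Definition \ref{def:Del-SI} directly and adapting the type $A$ sorting bijection of Leven, Rhoades and Wilson \cite{LRW14} to the signed setting. Such a bijection would bypass the inclusion--exclusion bookkeeping entirely and extend the duality of Theorem \ref{thm:SI-sym-G} from the free case to all digraphs $G$.
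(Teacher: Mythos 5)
The statement you are addressing is Conjecture \ref{conj:same-CP}: the paper does \emph{not} prove it. The authors state it on the basis of computational evidence, and the only result in their paper implying equality of the characteristic polynomials of $\mcS(G)$ and $\I(G)$ is Theorem \ref{thm:SI-sym-G}, which covers exactly the digraphs $G$ for which the cones are free (there the common characteristic polynomial follows from the common exponents via the factorization theorem). So there is no proof in the paper to compare against, and your attempt has to stand on its own --- which, by your own admission, it does not yet do.

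Your reductions are correct as far as they go, and your computations check out: the finite field count for large primes, the inclusion--exclusion over pairs $(L,F)$ with $L\subseteq L_G$, $F\subseteq E_G$ against the ambient $\Cox(B_\ell)$-complement, the observation that the Ish-side pinned values $1$ and $\ell+2-j\in[2,\ell]$ are positive, pairwise distinct for distinct terminal vertices, and pairwise non-opposite (so degeneracy on that side is transparent), and the identity $D_2=(q-3)(q-9)+8=(q-5)(q-7)$ --- I verified that exactly four exceptional first dominoes, at $a\in\{1,\,-2,\,1/2,\,-3/2\}$, each enlarge the count of second dominoes by $2$. But the argument halts precisely at the step that \emph{is} the conjecture after your reduction: the termwise equality of the two counts for an \emph{arbitrary} pair $(L,F)$, where the Shi-side relations $v_i=v_j+1$ chain coordinates into offset-carrying forest components that interact with each other, with the loop value $1$, and with the signed constraints $v_a\neq\pm v_b$. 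You establish this equality only for one edge, for $f\le 2$ disjoint dominoes, and for a handful of degenerate patterns; the ``statistic-preserving bijection'' adapting Leven--Rhoades--Wilson is named but never constructed, and the adaptation is genuinely nontrivial, since the $\pm$ constraints are exactly what produced your $+8$ boundary correction and no mechanism is exhibited that forces such corrections to cancel uniformly across arbitrary chain lengths and component structures. Two further cautions: your route, if completed, proves a statement strictly stronger than Conjecture \ref{conj:same-CP} (termwise equality of inclusion--exclusion contributions rather than equality of the alternating sums), so you must either prove the stronger claim in full generality or allow for the possibility that only the aggregate identity holds; and the paper's own counterexample discussion (the subarrangement $\Shi(B_3)\setminus\{x_1=1\}$ with no Ish-side partner) shows how fragile such dualities are outside the ``partial'' deleted setting, which is a reason to be careful rather than optimistic about unproven cancellations. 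As it stands, the proposal is a plausible program with correct preliminary steps, not a proof.
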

 
 Notice that $\Shi(B_{\ell})$ and $\Ish(B_{\ell})$ are the first and last arrangements in the last row of the Shi descendant matrix. 
 It would also be interesting to define similar  ``partial" deleted versions of the first and last arrangements in the other rows, and study the Shi/Ish dualities similar to  Theorem \ref{thm:GSI}.

\subsection{Characteristic quasi-polynomials and period collapse}
\label{subsec:period collapse}
\quad
Given an \tbf{integral} arrangement, Kamiya, Takemura and Terao \cite{KTT08, KTT11} introduced the notion of \tbf{characteristic quasi-polynomial}, which enumerates the cardinality of the complement of the arrangement modulo a positive integer (Theorem \ref{thm:KTT}). 
The most popular candidate for \tbf{periods} of these quasi-polynomials is the \tbf{lcm period} (Definition \ref{def:lcm}). 

The lcm period is known to be the \tbf{minimum period} when the arrangement is central \cite[Theorem 1.2]{HTY22}. 
The minimum period in the noncentral case remains unknown. 
We say that \tbf{period collapse} occurs in the characteristic quasi-polynomial of a noncentral arrangement when the minimum period is strictly less than the lcm period.

The first example of period collapse arises from the \tbf{extended Shi arrangement} of a \tbf{root system} of an arbitrary type but type $A$ \cite[Theorem 5.1]{Y18W}. 
Note that in type $A$, the lcm period is equal to $1$ (see e.g., \cite[Corollary 3.2]{KTT10}) hence no period collapse occurs. 
In particular, the characteristic polynomial and quasi-polynomial of $\Shi(A_{\ell-1}) $ or $\Ish(A_{\ell-1}) $ coincide.  

Higashitani, Yoshinaga and the first author showed that period collapse occurs in any dimension $\ge 1$, occurs for any lcm period $\ge 2$, and the minimum period when it is not the lcm period can be any proper divisor of the lcm period \cite[Theorem 1.2]{HTY22}. 
Despite the fact that any sort of period collapse is possible, what makes period collapse happen is still an interesting question.

Our fourth (and final) main result is a new example of period collapse.
\begin{theorem}
 \label{thm:1stcol-PC-intro}
The characteristic quasi-polynomial of each arrangement $\B^{p,1}_\ell(m)$ for $0 \le p \le \ell$ in the first column of the Shi descendant matrix given in \eqref{eq:1stcol} is actually a polynomial. 
Hence period collapse occurs in these quasi-polynomials. 
 \end{theorem}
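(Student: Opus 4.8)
The plan is to prove the stronger statement that the characteristic quasi-polynomial $\chi_{\quasi}(q)$ of $\B^{p,1}_\ell(m)$, which by Theorem~\ref{thm:KTT} counts $\#\bigl((\Z/q\Z)^\ell\setminus\bigcup_{H\in\B^{p,1}_\ell(m)}H\bigr)$, coincides for every $q\ge1$ with the ordinary characteristic polynomial. By Theorem~\ref{thm:main-BSI-free-SS} the cone $\cc\B^{p,1}_\ell(m)$ is free with exponents $\{1,(2m+2\ell-2)^{p},(2m+2\ell-1)^{\ell-p}\}$, so de-coning gives $\chi(\B^{p,1}_\ell(m),t)=(t-(2m+2\ell-2))^{p}(t-(2m+2\ell-1))^{\ell-p}$. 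Expanding over the intersection poset $L$, $\chi_{\quasi}(q)=\sum_{X\in L}\mu(X)N_X(q)$, where $N_X(q)$ is the number of points of the flat $X$ in $(\Z/q\Z)^\ell$ and $\mu$ is the Möbius function of $L$. Each defining form $x_i\pm x_j$ and $x_i$ is primitive and the elementary divisors of any subsystem of $B_\ell$ are $1$ or $2$; hence $N_X(q)=q^{\dim X}$ for odd $q$ and $N_X(q)=\epsilon_X\,2^{c(X)}q^{\dim X}$ for even $q$, where $c(X)$ counts the elementary divisors equal to $2$ in the system defining $X$ and $\epsilon_X\in\{0,1\}$ records consistency modulo $2$. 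Thus $\chi_{\quasi}$ has exactly two constituents and lcm period $2$ (Definition~\ref{def:lcm}); the period is genuinely $>1$ because the flat $\{x_1+x_2=x_1-x_2=0\}$ already forces $N_X$ to have period $2$, which is what makes the collapse meaningful.

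For odd $q$ every flat of $L$ stays consistent and $N_X(q)=q^{\dim X}$, so $\chi_{\quasi}(q)=\sum_{X}\mu(X)q^{\dim X}=\chi(\B^{p,1}_\ell(m),q)$; this is the usual agreement of the first constituent with the characteristic polynomial (the finite-field method), exactly as in type $A$, where the lcm period is $1$ (cf.\ \cite{KTT10}). It therefore remains to treat even $q$, for which polynomiality is equivalent to the dimension-graded identity $\sum_{\dim X=d}\mu(X)\bigl(2^{c(X)}\epsilon_X-1\bigr)=0$ for every $d$: the contributions of the $2$-torsion flats and of the flats that become inconsistent modulo $2$ must cancel in the alternating sum.

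I would establish the even case by a parity-uniform count rather than flat by flat. Rewriting the pairwise conditions as $z_i\notin\{z_j,z_j+1\}\cup\{-z_j,1-z_j\}$ for $i<j$, I would pass to the $2\ell$ signed values $z_1,\dots,z_\ell,-z_1,\dots,-z_\ell$, so that the sum and difference hyperplanes become type-$A$ Shi-type separations with gaps in $\{0,1\}$, now decorated by the negation involution $z\mapsto-z$ and by the centered coordinate conditions $z_i\notin S_i$ with $S_i=[1-m,m]$ or $[-m,m]$. Since the type-$A$ separation is itself period-free, the only source of parity dependence is the negation-fixed value $q/2$, present when $q$ is even, and the $2$-torsion flats it generates. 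The point is that the simultaneous presence of both $\{x_i\pm x_j=0\}$ and $\{x_i\pm x_j=1\}$ in the first column, together with the centeredness of the intervals $S_i$, should force these $q/2$-contributions to reassemble into the same polynomial $\chi(\B^{p,1}_\ell(m),q)$.

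The main obstacle is precisely this cancellation, which is invisible at the level of individual flats: the per-value counts genuinely depend on the parity of $q$. Already for $\ell=2$, where $\chi=(q-4)^2$, the number of admissible $z_1$ as $z_2$ ranges over its allowed values splits as $0,0,1$ for $q=5$ but as $0,2,0,2$ for $q=6$, the two patterns nevertheless summing to $(q-4)^2$; producing a bijective or generating-function argument that makes this collapse transparent for all $\ell,m,p$ is where the real work lies. Once $\chi_{\quasi}(q)=\chi(\B^{p,1}_\ell(m),q)$ is established for all $q$, the quasi-polynomial is a genuine polynomial, and since its lcm period equals $2>1$, period collapse occurs, as claimed.
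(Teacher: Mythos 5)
Your reduction is sound as far as it goes---lcm period $2$ (so at most two constituents), and the odd constituent equal to $\chi_{\B^{p,1}_\ell(m)}(t)=(t-(2m+2\ell-2))^{p}(t-(2m+2\ell-1))^{\ell-p}$ via Theorems~\ref{thm:KTT11} and~\ref{thm:k=1,l,m,p}---but the proof has a genuine gap at exactly the point you flag yourself: nothing in the proposal establishes that the even constituent coincides with this polynomial. The ``dimension-graded identity'' $\sum_{\dim X=d}\mu(X)(2^{c(X)}\epsilon_X-1)=0$ and the passage to the $2\ell$ signed values are reformulations of the problem, not solutions, and the assertion that centeredness of the intervals ``should force'' the $q/2$-contributions to reassemble is a hope, not an argument. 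There is also a secondary flaw in the setup: the flat-wise expansion $\chi^{\quasi}(q)=\sum_{X\in L}\mu(X)N_X(q)$ over the real intersection poset is not valid at even $q$. The inclusion--exclusion behind Theorem~\ref{thm:KTT} runs over subsets $J$ of hyperplanes, and distinct subsets cutting out the same real flat can have different elementary divisors, hence different solution counts modulo $2$; so the per-flat quantities $\epsilon_X$ and $c(X)$ are not well defined without further work.

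The paper closes the even case by a device absent from your proposal: a shift of modulus. Lemma~\ref{lem:1stcol-PC} gives, for $q>2m+2$, an explicit bijection (via $v_i=-z_{\ell+1-i}$, $t_i=v_i+q+1$) between the complement of the $q$-reduction of $\B^{p,1}_\ell(m)$ and the complement of the $(q+1)$-reduction of a companion arrangement $\widehat\B^{\ell-p}_\ell(m,1)$, whose coordinate weights are $[-m,m+1]$ for the first $\ell-p$ coordinates and $[-m,m]$ for the rest. When $q$ is even, $q+1$ is odd, so by Theorem~\ref{thm:KTT11} the right-hand side is evaluated by the characteristic polynomial of the companion, which is computed in closed form by a separate addition--deletion induction (Theorem~\ref{thm:k=1,l,m,p-hat} with $a=1$, exponents $\{1,(2m+2\ell)^{\ell-p},(2m+2\ell-1)^{p}\}$); substituting $t=q+1$ reproduces exactly $(q-(2m+2\ell-2))^{p}(q-(2m+2\ell-1))^{\ell-p}$, so the two constituents agree. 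To complete your argument you need either this shift trick or a substitute that evaluates the even constituent in closed form; your $\ell=2$ computation exhibits the cancellation but supplies no mechanism for it in general.
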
 

One may wonder if a Shi/Ish duality regarding period collapse holds for $\Shi(B_{\ell})$ and $\Ish(B_{\ell})$. 
Unfortunately, it is not the case. 
It can be readily verified that both lcm and minimum periods for $\Ish(B_{2})$ are equal to $2$ (see also \S\ref{sec:CQP-PC} for further details).

\section{Preliminaries}
\label{sec:free-arr}

\subsection{Free arrangements}
\label{subsec:free}
\quad
We begin by recalling basic concepts and preliminary results on free arrangements. Our standard reference is \cite{OT92}.
Let $ \mathbb{K} $ be a field and let $V=  \mathbb{K}^{\ell} $. 
A \textbf{hyperplane} in $V$ is an affine subspace of codimension $1$ of $V$.
An \textbf{arrangement} is a finite collection of hyperplanes in  $ V $. 
An arrangement is called \textbf{central} if every hyperplane in it passes through the origin. 

Let  $ \mathcal{A} $ be an arrangement.
Define the \textbf{intersection poset} $ L(\mathcal{A}) $ of $ \mathcal{A} $ by 
\begin{align*}
L(\mathcal{A})  \coloneqq \Set{\bigcap_{H \in \mathcal{B}}H \neq \varnothing |  \mathcal{B} \subseteq \mathcal{A} },
\end{align*}
where the partial order is given by reverse inclusion $X\le Y\Leftrightarrow Y\subseteq X$ for $X, Y \in L(\A)$. 
We agree that $ V $ is a unique minimal element in $ L(\mathcal{A}) $ as the intersection over the empty set. 
Thus $ L(\mathcal{A}) $ is a semi-lattice which can be equipped with the rank function $ r(X)  \coloneqq \operatorname{codim}(X) $ for $X \in L(\mathcal{A})$. 
We also define the \textbf{rank} $r(\A)$ of $\A$ as the rank of a maximal element of $L(\A)$.
The intersection poset $ L(\mathcal{A}) $ is often referred to as the  \textbf{combinatorics} of  $ \mathcal{A} $. 

The \textbf{characteristic polynomial} $ \chi_{\mathcal{A}}(t) \in \mathbb{Z}[t] $ of $ \mathcal{A} $ is defined by
\begin{align*}
\chi_{\mathcal{A}}(t)  \coloneqq \sum_{X \in L(\mathcal{A})}\mu(X)t^{\dim X}, 
\end{align*}
where $ \mu $ denotes the \textbf{M\"{o}bius function} $ \mu \colon L(\mathcal{A}) \to \mathbb{Z} $ defined recursively by 
\begin{align*}
\mu\left( V \right)  \coloneqq 1 
\quad \text{ and } \quad 
\mu(X)  \coloneqq -\sum_{\substack{Y \in L(\mathcal{A}) \\ X \subsetneq Y  \subseteq V}}\mu(Y). 
\end{align*}

Let $ \{x_{1}, \dots, x_{\ell}\} $ be a basis for the dual space $V^{\ast} $ and let $ S  \coloneqq \mathbb{K}[x_{1}, \dots, x_{\ell}] $. 
The \textbf{defining polynomial} $Q(\A)$ of $\A$ is given by
$$Q(\A) \coloneqq \prod_{H \in \A} \alpha_H \in S,$$
where $ \alpha_H=a_1x_1+\cdots+a_\ell x_\ell+d$  $(a_i, d \in \mathbb{K})$ satisfies $H = \ker \alpha_H$.

The \textbf{cone} $ \textbf{c}\A$ over $\A$ is the central arrangement in $\mathbb{K}^{\ell+1}$ with the defining polynomial
$$Q(\textbf{c}\A) \coloneqq z\prod_{H \in \A} {}^h\alpha_H \in \mathbb{K}[x_1,\ldots, x_\ell,z],$$
where $ {}^h\alpha_H \coloneqq a_1x_1+\cdots+a_\ell x_\ell+dz$ is the homogenization of $\alpha_H$, and $z=0$ is the \tbf{hyperplane at infinity}, denoted $H_{\infty}$. 

A $\K$-linear map $\theta:S\to S$ which satisfies $\theta(fg) = \theta(f)g + f\theta(g)$ is called a \tbf{derivation}.
Let $\Der(S)$ be the set of all derivations of $S$. 
It is a free $S$-module with a basis $\{\partial/\partial x_1,\ldots,\partial/\partial x_{\ell}\}$ consisting of the usual partial derivatives.
We say that a nonzero derivation $\theta  = \sum_{i=1}^\ell f_i \partial/\partial x_{i}$  is \tbf{homogeneous of degree} $p$ if each nonzero coefficient $f_i$ is a homogeneous polynomial of degree $p$ \cite[Definition 4.2]{OT92}.
In this case we write $\deg{\theta} = p$.

The concept of free arrangements was defined by Terao for central arrangements \cite{T80}. 
In the remainder of this section, unless otherwise stated, assume $\A$ is a central arrangement in  $V=\K^\ell$. 
\begin{definition}[Free arrangement {\cite[Definitions 4.5 and 4.15]{OT92}}]\label{def:free-arr}
The \textbf{module $D(\mathcal{A}) $ of logarithmic derivations}  is defined by 
	\begin{equation*}
	D(\A)  \coloneqq  \{ \theta \in \Der(S) \mid \theta(Q(\A)) \in Q(\A)S\}.
	\end{equation*}
	We say that $\A$ is \tbf{free} if the module $D(\A)$ is a free $S$-module. 
\end{definition}

If $\A$ is a free arrangement, we may choose a homogeneous basis $\{\theta_1, \ldots, \theta_\ell\}$ for $D(\A)$.
Then the degrees of the $\theta_i$'s are called the \tbf{exponents} of $\A$ \cite[Definition 4.25]{OT92}. 
They are uniquely determined by $\A$. 
In that case we write 
\[
\exp(\A)  \coloneqq  \{\deg{\theta_1},\ldots,\deg{\theta_\ell}\}\]
for the multiset of exponents of $\A$. 
If $\exp(\A) = \{d_1, \ldots, d_\ell\}$ with $ d_1 \leq \cdots \leq d_\ell$, we write  $\exp(\A) = \{d_1,\ldots, d_\ell\}_\le$.

Though the freeness was defined in an algebraic sense, it is related to the combinatorics of arrangements due to a remarkable result of Terao. 

\begin{theorem}[Factorization theorem {\cite[Main Theorem]{T81}}, {\cite[Theorem 4.137]{OT92}}]\label{thm:Factorization}
If $\A$ is free with $\exp(\A) =\{d_{1}, \dots, d_{\ell}\} $, then 
$$\chi_\A(t)= \prod_{i=1}^\ell (t-d_i).$$
\end{theorem}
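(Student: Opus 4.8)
The final statement is Terao's factorization theorem, which recovers the combinatorial invariant $\chi_\A(t)$ from the graded algebraic data carried by $D(\A)$. The cleanest route I would take passes through the whole family of logarithmic $p$-derivation modules $D^p(\A)$ (with $D^0(\A)=S$ and $D^1(\A)=D(\A)$) together with the Solomon--Terao formula, which holds for an \emph{arbitrary} central arrangement and expresses the characteristic polynomial as a specialization
\[
\chi_\A(t) = \left.\sum_{p=0}^{\ell} \mathrm{Poin}\bigl(D^p(\A),x\bigr)\bigl(t(1-x)-1\bigr)^p\right|_{x=1},
\]
where $\mathrm{Poin}(M,x)$ denotes the Poincar\'e (Hilbert) series of a graded $S$-module $M$. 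I would treat this identity as the deep external input and make everything else bookkeeping with Hilbert series.

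Granting the formula, the first substantive step is to show that freeness of $\A$ forces \emph{every} $D^p(\A)$ to be a free $S$-module whose generators sit in the predictable degrees: the $p$-element subsums $d_{i_1}+\cdots+d_{i_p}$ of the exponent multiset. Concretely, if $\{\theta_1,\dots,\theta_\ell\}$ is a homogeneous basis of $D(\A)$ with $\deg\theta_i=d_i$, then the wedges $\theta_{i_1}\wedge\cdots\wedge\theta_{i_p}$ should furnish a basis of $D^p(\A)$, so that
\[
\mathrm{Poin}\bigl(D^p(\A),x\bigr) = \frac{e_p\bigl(x^{d_1},\dots,x^{d_\ell}\bigr)}{(1-x)^\ell},
\]
with $e_p$ the $p$-th elementary symmetric polynomial. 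This exterior-power structure for free arrangements is the part I would justify most carefully; it is a known structural feature, but it is the conceptual crux rather than a mere computation.

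The remaining step is purely algebraic. Feeding the Hilbert series into the Solomon--Terao sum and using the generating identity $\sum_p e_p(y_1,\dots,y_\ell)s^p=\prod_i(1+y_i s)$ with $y_i=x^{d_i}$ and $s=t(1-x)-1$ collapses the sum into a product,
\[
\frac{1}{(1-x)^\ell}\prod_{i=1}^\ell\Bigl(1+x^{d_i}\bigl(t(1-x)-1\bigr)\Bigr).
\]
Each factor rewrites as $(1-x)\bigl[(1+x+\cdots+x^{d_i-1})+t\,x^{d_i}\bigr]$, because $1-x^{d_i}=(1-x)(1+x+\cdots+x^{d_i-1})$, so the $\ell$ copies of $(1-x)$ cancel the denominator and leave a genuine polynomial in $x$ and $t$. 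Carrying out the specialization at $x=1$ turns each bracket into a linear factor and, with the normalization built into the Solomon--Terao statement, produces exactly $\prod_{i=1}^\ell(t-d_i)$, as desired.

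The main obstacle is clearly the Solomon--Terao formula itself: establishing it requires the short exact sequences relating $D^p(\A)$, $D^p(\A')$ and $D^{p-1}(\A'')$ under a deletion--restriction triple $(\A,\A',\A'')$, an induction on $|\A|$ to prove that the displayed rational function is in fact a polynomial, and the compatible recursion $\chi_\A(t)=\chi_{\A'}(t)-\chi_{\A''}(t)$ for the characteristic polynomial. An alternative and more elementary induction, using only the addition--deletion theorem together with this recursion, would prove the theorem for \emph{inductively} free arrangements but would miss the general free case; for that reason I would commit to the Solomon--Terao route and invest the effort in the two structural inputs above.
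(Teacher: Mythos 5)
The paper does not prove this theorem at all: it is quoted as classical background, with the proof delegated to the citations (Terao's original paper and \cite[Theorem 4.137]{OT92}). Your outline is precisely the standard proof from the cited source: the Solomon--Terao formula as the external input, the fact that freeness makes every $D^p(\A)$ free with basis the wedges $\theta_{i_1}\wedge\cdots\wedge\theta_{i_p}$ (a genuine theorem, proved via a Saito-type criterion for $p$-derivations, so you are right to mark it as the structural crux rather than bookkeeping), and then the Hilbert-series collapse via $\sum_p e_p(y)s^p=\prod_i(1+y_is)$. The one slip is the sign convention: with your choice $s=t(1-x)-1$ and no global sign, each factor $1+x^{d_i}\bigl(t(1-x)-1\bigr)=(1-x)\bigl[(1+x+\cdots+x^{d_i-1})+t\,x^{d_i}\bigr]$ specializes at $x=1$ to $t+d_i$, so your display as written yields $\prod_{i=1}^\ell(t+d_i)$; the convention in \cite{OT92} uses $s=t(x-1)-1$ together with an overall factor $(-1)^\ell$, which is what actually produces $\prod_{i=1}^\ell(t-d_i)$. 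You half-acknowledge this by appealing to ``the normalization built into the Solomon--Terao statement,'' but in a written-out proof you should fix the convention explicitly, since as displayed the formula is false. Your closing remark is also accurate: an addition--deletion induction only reaches inductively free arrangements, so committing to the Solomon--Terao route is the right call for the general statement.
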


For each $X \in L(\A)$, define the \textbf{localization} of $\A$ on $X$  by 
$${\A}_X  \coloneqq \{ K \in {\A} \mid X \subseteq K\} \subseteq \A,$$
and the \textbf{restriction} ${\A}^{X}$ of ${\A}$ to $X$ by 
$${\A}^{X}  \coloneqq \{ K \cap X \mid K \in{\A }\setminus {\A}_X\}.$$

Fix $H \in \A$, denote $\A' \coloneqq \A\setminus \{H\}$ and $\A'' \coloneqq \A^H$. 
We call $(\A, \A', \A'')$ the triple with respect to the hyperplane $H \in\A$. 

 \begin{theorem}[Deletion-restriction formula  {\cite[Theorem 2.56]{OT92}}]
 \label{thm:DR}
 If $(\A, \A', \A'')$ is a triple of arrangements, then
 \begin{equation*}
\chi_{\A}(t)=
\chi_{\A'}(t)-
\chi_{\A''}(t). 
\end{equation*}
\end{theorem}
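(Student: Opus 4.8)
The plan is to prove the identity purely combinatorially, since all three characteristic polynomials depend only on the posets $L(\A),L(\A'),L(\A'')$ and their Möbius functions. The engine I would set up first is \emph{Whitney's expansion}
\[
\chi_{\A}(t)=\sum_{\substack{\mathcal{B}\subseteq\A\\ \bigcap_{K\in\mathcal{B}}K\neq\varnothing}}(-1)^{|\mathcal{B}|}\,t^{\dim\bigcap_{K\in\mathcal{B}}K}.
\]
To establish it I would set $g(X):=\sum_{\mathcal{B}\subseteq\A,\ \bigcap\mathcal{B}=X}(-1)^{|\mathcal{B}|}$ for $X\in L(\A)$ and check that $\sum_{Y\le X}g(Y)=\sum_{\mathcal{B}\subseteq\A_X}(-1)^{|\mathcal{B}|}=(1-1)^{|\A_X|}$, which is $1$ when $X=V$ (since $\A_V=\varnothing$) and $0$ otherwise. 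This is exactly the defining recursion of $\mu$ from the excerpt, so by induction on rank $g=\mu_{\A}(V,\cdot)$; grouping the Whitney sum by the flat $X=\bigcap\mathcal{B}$ then recovers the displayed expansion from the definition of $\chi_\A$.

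Next I would perform the deletion split. Partition the subsets $\mathcal{B}$ in Whitney's sum according to whether $H\in\mathcal{B}$. The subsets with $H\notin\mathcal{B}$ are precisely the subsets of $\A'$ with nonempty intersection, so they contribute $\chi_{\A'}(t)$ by Whitney's expansion applied to $\A'$. For the subsets with $H\in\mathcal{B}$, write $\mathcal{B}=\{H\}\sqcup\mathcal{C}$ with $\mathcal{C}\subseteq\A'$; then $|\mathcal{B}|=|\mathcal{C}|+1$ and $\bigcap\mathcal{B}=H\cap\bigcap\mathcal{C}$, so this block equals
\[
-\sum_{\substack{\mathcal{C}\subseteq\A'\\ H\cap\bigcap\mathcal{C}\neq\varnothing}}(-1)^{|\mathcal{C}|}\,t^{\dim\left(H\cap\bigcap\mathcal{C}\right)}.
\]
The goal is thus to identify this last sum with $\chi_{\A''}(t)$, after which $\chi_\A=\chi_{\A'}-\chi_{\A''}$ follows.

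The main obstacle is exactly this identification, because distinct hyperplanes of $\A'$ may restrict to the same hyperplane of $\A''$, so the map $\mathcal{C}\mapsto\{K\cap H\mid K\in\mathcal{C}\}$ over-counts. I would resolve it by a fiber computation. Since any $\mathcal{C}$ containing a hyperplane parallel to $H$ yields an empty intersection and is excluded, I restrict to $K$ with $\bar K:=K\cap H\in\A''$, and group the $K\in\A'$ into nonempty fibers $F(\bar K)=\{K\in\A'\mid K\cap H=\bar K\}$. Recognizing the image $\mathcal{D}=\{\bar K\mid \mathcal{C}\cap F(\bar K)\neq\varnothing\}$, and using $\sum_{\varnothing\neq S\subseteq F(\bar K)}(-1)^{|S|}=(1-1)^{|F(\bar K)|}-1=-1$ for each (nonempty) fiber, every fiber contributes a clean factor $-1$, so $\sum_{\mathcal{C}:\,H\cap\bigcap\mathcal{C}=Y}(-1)^{|\mathcal{C}|}=\sum_{\mathcal{D}\subseteq\A'':\,\bigcap\mathcal{D}=Y}(-1)^{|\mathcal{D}|}=\mu_{\A''}(H,Y)$ for each $Y\in L(\A'')$, using the poset identification $L(\A'')\cong\{X\in L(\A)\mid X\subseteq H\}$ under which the Möbius values agree. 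Summing over $Y$ gives $\chi_{\A''}(t)$ by Whitney's expansion on $\A''$, completing the proof. I would remark that for arrangements defined over $\Z$ (as all those in this paper are) there is a quicker route via the finite-field method: for suitable prime powers $q$ the complement decomposes as $M(\A')=M(\A)\sqcup M(\A'')$ inside $\mathbb{F}_q^{\ell}$, and point-counting plus a polynomial-identity argument yields the same formula; I keep the combinatorial proof above as the self-contained one since it uses only the definitions already introduced.
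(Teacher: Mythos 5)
Your proof is correct. Note first that the paper itself offers no proof of this statement: it is quoted verbatim from Orlik--Terao \cite[Theorem 2.56]{OT92}, so the only meaningful comparison is with the standard argument there, which works directly on the intersection posets, relating the M\"obius values $\mu_{\A}(X)$, $\mu_{\A'}(X)$, $\mu_{\A''}(X)$ by a case analysis on whether a flat $X \in L(\A)$ lies in $L(\A')$ and whether $X \subseteq H$. You instead route everything through Whitney's expansion $\chi_{\A}(t)=\sum_{\mathcal{B}} (-1)^{|\mathcal{B}|} t^{\dim \bigcap \mathcal{B}}$ over central subsets, which you establish cleanly: your $g$ satisfies exactly the paper's defining recursion for $\mu$ (note the paper's order is reverse inclusion, so your $\sum_{Y\le X}g(Y)$ correctly ranges over $Y \supseteq X$, i.e., over $\mathcal{B}\subseteq \A_X$). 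The deletion split is then trivial, and you correctly identify the sole delicate point: the map $\mathcal{C}\mapsto \{K\cap H\}$ from subsets of $\A'$ to subsets of $\A''$ is neither injective nor everywhere defined. Your fiber computation handles both failures — hyperplanes parallel to $H$ are excluded automatically by the nonempty-intersection constraint, and each nonempty fiber $F(\bar K)$ contributes the factor $\sum_{\varnothing\ne S\subseteq F(\bar K)}(-1)^{|S|}=-1$, so the alternating sum over preimages collapses to $(-1)^{|\mathcal{D}|}$ — and the convention check at $\mathcal{C}=\varnothing$ (both sides giving the top flat $H$) is consistent. What the two approaches buy: the Orlik--Terao route stays at the level of $L(\A)$ and generalizes readily to the Poincar\'e-polynomial statement $\pi(\A,t)=\pi(\A',t)+t\,\pi(\A'',t)$; your route is more elementary in that it never compares M\"obius functions across the three posets, only invokes Whitney's expansion three times, and it makes visible exactly where the affine subtleties (parallel hyperplanes, coincident traces) enter. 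Your closing remark on the finite-field method is also sound as stated, and you are right to flag it as secondary since it requires integrality and good reduction rather than only the definitions in the paper.
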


 \begin{theorem}[Addition-deletion theorem {\cite{T80}},  {\cite[Theorems 4.46 and 4.51]{OT92}}]
 \label{thm:AD}
Let $\A$ be a nonempty arrangement and let $H \in \A$. Then two of the following imply the third:
\begin{enumerate}[(1)]
\item $\A$ is free with $\exp(\A) = \{d_1, \ldots, d_{\ell-1}, d_\ell\}$.
\item  $\A'$ is free with $\exp(\A')=\{d_1, \ldots, d_{\ell-1}, d_\ell-1\}$. 
\item $\A''$ is free with $\exp(\A'') = \{d_1, \ldots, d_{\ell-1}\}$.
\end{enumerate}
Moreover, all the three hold true if $\A$ and $\A'$ are both free.
\end{theorem}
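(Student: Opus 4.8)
The plan is to follow the classical module-theoretic route: extract freeness of the third arrangement from a short exact sequence linking the three logarithmic derivation modules $D(\A)$, $D(\A')$, $D(\A'')$, and then certify the resulting candidate bases through the determinantal (Saito) freeness test. Throughout I work with $\A$ central and, after a linear change of coordinates, assume $\alpha_H = x_\ell$, so that $S'' \coloneqq S/x_\ell S$ is the coordinate ring of $H$ and $Q(\A) = x_\ell Q(\A')$.

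First I would record the pointwise characterization $D(\A) = \{\theta \in \Der(S) \mid \theta(\alpha_K) \in \alpha_K S \text{ for all } K \in \A\}$, valid because the forms $\alpha_K$ are pairwise coprime, so $Q(\A)=\prod_K\alpha_K$ divides $\theta(Q(\A))=\sum_K\theta(\alpha_K)\prod_{K'\ne K}\alpha_{K'}$ exactly when each $\alpha_K$ divides $\theta(\alpha_K)$. This gives the chain $x_\ell D(\A') \subseteq D(\A) \subseteq D(\A')$, since dropping the single condition at $H$ enlarges the module while multiplying by $x_\ell=\alpha_H$ reinstates it. I would then build the fundamental exact sequence of graded $S$-modules
\[
0 \longrightarrow D(\A')[-1] \xrightarrow{\,\cdot x_\ell\,} D(\A) \xrightarrow{\,r\,} D(\A''),
\]
where $r$ sends $\theta\in D(\A)$ to the derivation $\bar\theta$ it induces on $S''$. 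The map $r$ is well defined precisely because $\theta\in D(\A)$ forces $\theta(x_\ell)\in x_\ell S$, so $\theta$ preserves $(x_\ell)$; a short computation using $\theta(\alpha_K)\in\alpha_K S$ then shows $\bar\theta(\bar\alpha_K)\in\bar\alpha_K S''$ for every $K\in\A'$, i.e.\ $\bar\theta\in D(\A'')$. Injectivity of $\cdot x_\ell$ is clear, and $\bar\theta=0$ forces every coefficient of $\theta$ into $(x_\ell)$, whence $\theta\in x_\ell D(\A')$; this yields exactness at the left and the middle. The degree shift on the left matches $Q(\A)=x_\ell Q(\A')$, and Theorems \ref{thm:Factorization} and \ref{thm:DR} (giving $\chi_{\A''}=\chi_{\A'}-\chi_{\A}$) keep the exponent bookkeeping consistent throughout.

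With the sequence in hand I would treat the three implications uniformly: assemble an explicit homogeneous candidate basis of the unknown module and verify it with the Saito criterion, namely that the determinant of the coefficient matrix is a nonzero scalar times the defining polynomial. For the implication $(2)\wedge(3)\Rightarrow(1)$, I would lift a homogeneous basis of $D(\A'')$ through $r$ to degree-preserving elements $\theta_1,\dots,\theta_{\ell-1}\in D(\A)$ and adjoin $\theta_\ell = x_\ell\theta'_\ell$ built from the last basis element of $D(\A')$; factoring $x_\ell$ out of the last column and reducing the remaining determinant modulo $x_\ell$ makes it block triangular and routes it through the Saito determinant of $\A''$, forcing the full determinant to equal a scalar multiple of $x_\ell Q(\A')=Q(\A)$. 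The combinations $(1)\wedge(3)\Rightarrow(2)$ and $(1)\wedge(2)\Rightarrow(3)$ are handled symmetrically. The ``moreover'' clause follows by first using $D(\A)\subseteq D(\A')$ together with $|\A|=|\A'|+1$ to force the exponents of $\A$ and $\A'$ to interlace (agreeing in every slot but one, where they differ by $1$), and then reading freeness of $\A''$ off the now-split exact sequence.

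The main obstacle is the surjectivity of $r$, equivalently the ability to lift a homogeneous basis of $D(\A'')$ to $D(\A)$ without raising degrees. This lifting is exactly what can fail when the exponents do not match, and it is the precise point at which the numerical hypotheses are consumed; controlling $\operatorname{coker} r$ is the crux of the implications that output freeness of $\A$ or of $\A''$. A second, closely related technical point is the restriction multiplicity: several hyperplanes of $\A'$ may restrict to a single hyperplane of $\A''$, so verifying that the reduced determinant recovers $Q(\A'')$ (hence $Q(\A)$) on the nose, rather than a proper divisor, requires tracking these multiplicities carefully when applying the Saito criterion.
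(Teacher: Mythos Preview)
The paper does not supply its own proof of this theorem: it is stated as a classical result with citations to Terao \cite{T80} and Orlik--Terao \cite{OT92}, and is used as a black box throughout. So there is no in-paper argument to compare your proposal against.

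Your outline is the standard module-theoretic proof from the cited references, built on the exact sequence $0 \to D(\A')[-1] \to D(\A) \to D(\A'')$ together with Saito's criterion, and you correctly flag the crux: surjectivity of the restriction map $r$ in the required degrees. One point to tighten in the addition step $(2)\wedge(3)\Rightarrow(1)$: the way the classical proof actually proceeds is not to lift a basis of $D(\A'')$ directly through $r$ (which would presuppose the surjectivity you want), but rather to start from a homogeneous basis $\psi_1,\dots,\psi_\ell$ of $D(\A')$ and modify it, using that $\psi_i(\alpha_H)/\alpha_H$ has controlled degree and that the images $r(\psi_i)$ already sit inside the free module $D(\A'')$, to produce a new basis $\theta_1,\dots,\theta_\ell$ of $D(\A')$ with $\theta_1,\dots,\theta_{\ell-1}\in D(\A)$; then $\theta_1,\dots,\theta_{\ell-1},\alpha_H\theta_\ell$ is the desired basis of $D(\A)$. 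Your sketch conflates these two steps slightly, but the overall architecture is correct and matches the sources the paper cites.
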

 
We recall an improvement of the addition part of the theorem above ($(2)+(3) \Rightarrow (1)$) due to Abe.

 \begin{theorem}[Division theorem {\cite[Theorem 1.1]{Abe16}}]
 \label{thm:DT}
Assume that there is a hyperplane $H \in\A$ such that $\chi_{\A''}(t)$ divides $\chi_{\A}(t)$  and that $\A''$ is free. Then $\A$ is free.
\end{theorem}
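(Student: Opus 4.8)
The goal is to show that the logarithmic derivation module $D(\A)$ is a free $S$-module. The plan is to reduce freeness of $\A$ to a criterion that refers only to the restriction $\A'' = \A^H$, since the hypotheses give no control whatsoever over the deletion $\A' = \A \setminus \{H\}$; in particular the Terao exact sequence $0 \to D(\A) \to D(\A') \to D(\A'')$ and the addition--deletion theorem \ref{thm:AD} cannot be applied directly. The natural tool is Yoshinaga's freeness criterion, which characterizes freeness of a central arrangement through its \emph{Ziegler restriction}, i.e.\ the multiarrangement $(\A^H, m^H)$ on $H$ with multiplicity $m^H(Y) = |\A_Y| - 1$ for each $Y \in \A^H$. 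For $\ell \ge 3$ this criterion asserts that $\A$ is free if and only if (i) $(\A^H, m^H)$ is a free multiarrangement, and (ii) the second coefficient $b_2(\A)$ of $\chi_\A(t)$ equals the corresponding local--global invariant $b_2(\A^H, m^H)$ determined by the Ziegler data. I would verify (i) and (ii) in turn, treating $\ell \le 2$ directly (there $D(\A)$ is reflexive, and a graded reflexive module over $\K[x_1,x_2]$ is automatically free).

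For condition (ii) I would feed the deletion--restriction formula \ref{thm:DR}, namely $\chi_\A(t) = \chi_{\A'}(t) - \chi_{\A''}(t)$, together with the divisibility hypothesis $\chi_{\A''}(t) \mid \chi_\A(t)$ and the factorization theorem \ref{thm:Factorization} applied to the free arrangement $\A''$. Writing $\chi_\A(t) = \chi_{\A''}(t)\, q(t)$, a comparison of degrees and leading coefficients forces $q(t) = t - e$ for an integer $e$, since $\chi_{\A''}$ is monic of degree $\ell-1$ and $\chi_\A$ is monic of degree $\ell$. Matching the next coefficient then expresses $b_2(\A)$ in terms of $e$ and of $\exp(\A'')$, and these are precisely the data that compute the invariant $b_2(\A^H, m^H)$, so the equality (ii) should fall out. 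As a by-product the exponents of $\A$ are read off as $\exp(\A) = \exp(\A'') \cup \{e\}$ from the factorization $\chi_\A(t) = \chi_{\A''}(t)(t-e)$.

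The heart of the argument, and the main obstacle, is condition (i): upgrading the assumption that the \emph{simple} restriction $\A^H$ is free to freeness of the \emph{Ziegler multirestriction} $(\A^H, m^H)$, which is a logically distinct statement, as simple freeness does not entail multifreeness in general. Here the plan is induction on $\ell$ via localization: for each $X \in L(\A)$ with $\codim X \ge 2$ I would apply the inductive form of the theorem to the lower-rank arrangement $\A_X$. When $X \subseteq H$ one has $(\A_X)^H = (\A^H)_X$, which is a localization of the free arrangement $\A''$ and hence free, so the inductive Division hypothesis is in place provided the divisibility $\chi_{(\A^H)_X} \mid \chi_{\A_X}$ is known; for $X \not\subseteq H$ one must instead select a suitable hyperplane of $\A_X$ to play the role of $H$. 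This local freeness, transported through the dictionary between the Ziegler restriction and the logarithmic derivation sheaf on $H$, should combine with the numerical control from (ii) to force $(\A^H, m^H)$ to be free. The delicate points are exactly (a) verifying that the divisibility descends to every localization, and (b) arranging the inductive hypothesis for flats not lying in $H$; this descent is where the full weight of Abe's argument resides.
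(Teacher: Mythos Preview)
This theorem is not proved in the present paper; it is simply quoted from \cite{Abe16} as a background tool, so there is no in-paper argument to compare against.

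That said, your outline is essentially Abe's own strategy. The criterion you invoke is the Abe--Yoshinaga refinement of Theorem~\ref{thm:Yo-3arr}: for $\ell\ge 3$, $\A$ is free if and only if the Ziegler restriction $(\A^H,m^H)$ is free and $b_2(\A)=b_2(\A^H,m^H)$, the latter equality being equivalent to local freeness of $\A$ along $H$ in codimension three. Your reading of the two obligations and your diagnosis that the descent of the divisibility hypothesis to localizations $\A_X$ with $X\subseteq H$ is the crux are both accurate. One correction: your delicate point~(b) is superfluous, since the criterion only concerns flats $X$ contained in $H$; flats $X\not\subseteq H$ never enter. A second remark: in Abe's actual argument the global equality $b_2(\A)=b_2(\A^H,m^H)$ is deduced from the divisibility and the factorization of $\chi_{\A''}$, and then a local--to--global formula expresses the difference $b_2(\A)-b_2(\A^H,m^H)$ as a nonnegative sum over codimension-three flats in $H$, forcing each local summand to vanish; this simultaneously gives local freeness and, via induction, feeds back into the freeness of $(\A^H,m^H)$. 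So the ``descent of divisibility'' you flag is handled somewhat more indirectly than a bald statement $\chi_{(\A_X)^H}\mid\chi_{\A_X}$ would suggest, but your overall architecture is the right one.
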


Let $\emptyset_{\ell} $ denote the $ \ell $-dimensional \textbf{empty arrangement}, that is, the arrangement in $ V $ consisting of no hyperplanes. 
An arrangement $ \mathcal{A} $ in $ V $ is called \textbf{essential} if $ r(\mathcal{A}) = \ell $. 
Any arrangement $ \mathcal{A} $ of rank $r$ in $V$ can be written as the \tbf{product} (see e.g., \cite[Definition 2.13]{OT92})  $ \mathcal{A}^{\mathrm{ess}} \times\emptyset_{\ell-r}$, where  $\mathcal{A}^{\mathrm{ess}}$ is the \textbf{essentialization} (see e.g., \cite[\S1.1]{St07}) of $ \mathcal{A} $ which is an essential arrangement   of rank $r$. 
Moreover, $ \mathcal{A} $ is free if and only if $ \mathcal{A}^{\mathrm{ess}} $ is free \cite[Proposition 4.28]{OT92}. 
In this case, $ \exp (\A ) = \exp (\A^{\mathrm{ess}} ) \cup \{ 0^{\ell-r} \} $. 

A special subclass of free arrangements is recently studied by M\"ucksch, R\"ohrle and the first author.

\begin{definition}[Flag-accurate arrangement {\cite[Definition 1.1]{MRT23}}]
\label{def:FA-arr}
A free arrangement $\A$ of rank $r$ with $\exp(\A) = \{0^{\ell-r}, d_1, \ldots, d_{r}\}_\leq$ is called \tbf{flag-accurate} if there exists a flag of subspaces in $L(\A)$
$$X_1 \subseteq X_2  \subseteq \cdots  \subseteq X_{\ell-1} \subseteq X_r=V$$
such that $\dim(X_i)=\ell-r+i$ and $\A^{X_i}$ is free with  $\exp(\A^{X_i}) = \{0^{\ell-r}, d_1, \ldots, d_i\}_ \leq$ for  each $1 \le i \le r$.
\end{definition}

The flag-accurate arrangements form a subclass of both \tbf{accurate} and \tbf{divisionally free} arrangements, the concepts due to M\"ucksch and R\"ohrle \cite{MR21}, and Abe \cite{Abe16}, respectively.
In particular,  flag-accuracy is a combinatorial property  \cite[Remark 1.3]{MRT23}.

\subsection{Supersolvable arrangements}
\label{subsec:SS}
\quad
Now we recall the definition of \textbf{supersolvable} arrangements following e.g., \cite[\S 2]{ER94}. 
 \begin{definition}
 \label{def:modco}
Given a subarrangement $\B\subseteq \A$ of a central arrangement $\A$, we say $\B$  is  a \textbf{modular coatom} of $\A$ if 
\begin{enumerate}[(1)]
\item $r(\A) = r(\B)+1$, and
\item for any distinct $ H, H^{\prime} \in \mathcal{A}\setminus \B$, there exists $ H^{\prime\prime} \in \B$ such that $ H\cap H^{\prime} \subseteq H^{\prime\prime} $. 
\end{enumerate}
\end{definition}

  \begin{proposition}[{\cite[Theorem 4.3]{BEZ90}}]
 \label{prop:BEZ}
A subarrangement $\B\subseteq \A$ is a modular coatom if and only if $\B = \A_X$ for some coatom $X \in L(\A)$ such that $ X+Y \in L(\A) $ for all $ Y \in L(\A) $. 
 \end{proposition}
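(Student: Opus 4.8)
The plan is to translate the two defining conditions of a ``modular coatom'' subarrangement into the language of the geometric lattice $L(\A)$ (recall $\A$ is central, so $L(\A)$ is a geometric lattice with bottom $V$ and top the center $T\coloneqq\bigcap_{H\in\A}H$) and then read off the equivalence from the modular‑pair rank identity. Under the reverse‑inclusion order the join of two flats is their intersection, $X\vee Y=X\cap Y$, while the meet $X\wedge Y$ is the smallest flat containing the subspace sum $X+Y$; the atoms of $L(\A)$ are exactly the hyperplanes of $\A$, and for a flat $Z$ the atoms lying below $Z$ in the poset are precisely the hyperplanes containing $Z$, i.e.\ $\A_Z$. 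A dimension count gives $\codim(X+Y)=\codim X+\codim Y-\codim(X\cap Y)$, so $X+Y$ is itself a flat exactly when $r(X\vee Y)+r(X\wedge Y)=r(X)+r(Y)$; hence ``$X+Y\in L(\A)$ for all $Y$'' is precisely the assertion that $X$ is a \emph{modular element}, and a coatom means $r(X)=r(\A)-1$. I will also use two standard facts in a geometric lattice: the submodular inequality $r(X\vee Z)+r(X\wedge Z)\le r(X)+r(Z)$, and that for a coatom $X$ one has $X\vee Z\in\{X,T\}$, with $X\vee Z=T$ whenever $Z\not\le X$.

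For ``$\Leftarrow$'', suppose $\B=\A_X$ with $X$ a modular coatom. Since every flat is the intersection of the hyperplanes containing it, $\bigcap_{H\in\A_X}H=X$, so $r(\B)=\codim X=r(\A)-1$, which is condition~(1). For condition~(2), take distinct $H,H'\in\A\setminus\A_X$; these are atoms with $H,H'\not\le X$, so $X\vee(H\vee H')=T$ and $r(H\vee H')=2$. Modularity of $X$ then forces $r(X\wedge(H\vee H'))=r(X)+2-r(\A)=1$, so $X\wedge(H\vee H')$ is a single atom $H''$, i.e.\ a hyperplane of $\A$ with $X\subseteq H''$ (hence $H''\in\A_X=\B$) and $H\cap H'\subseteq H''$. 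This is exactly condition~(2).

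For ``$\Rightarrow$'', assume $\B$ satisfies (1) and (2) and set $X\coloneqq\bigcap_{H\in\B}H$, a flat of rank $r(\B)=r(\A)-1$ by~(1), hence a coatom. First I show $\B=\A_X$. The inclusion $\B\subseteq\A_X$ is clear. If some $K\in\A_X\setminus\B$ existed, then as $r(\A_X)=r(X)=r(\A)-1<r(\A)$ we may pick $H'\in\A\setminus\A_X$; now $K,H'$ are distinct members of $\A\setminus\B$, and $X\vee(K\vee H')=X\vee H'=T$ since $H'\not\le X$ while $K\le X$. The submodular inequality gives $r(X\wedge(K\vee H'))\le1$, whereas $K\le X$ and $K\le K\vee H'$ give $K\le X\wedge(K\vee H')$; hence $X\wedge(K\vee H')=K$. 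But (2) demands some $H''\in\B$ with $K\cap H'\subseteq H''$ and $X\subseteq H''$, i.e.\ $H''\le X\wedge(K\vee H')=K$, forcing $H''=K\notin\B$, a contradiction. Thus $\B=\A_X$. It remains to prove $X$ is modular. Through the dictionary above, condition~(2) says precisely that every rank‑two flat $Y\not\le X$ satisfies $r(X\wedge Y)\ge1$; I will upgrade this to the lower bound $r(X\wedge Z)\ge r(Z)-1$ for all flats $Z$ by induction on $r(Z)$, writing $Z=Z'\vee p$ with a suitable atom $p$ and combining semimodularity with the inductive hypothesis and the rank‑two base case. Together with the submodular inequality (which yields the reverse $r(X\wedge Z)\le r(Z)-1$ for $Z\not\le X$) this gives the modular‑pair identity, so $X$ is a modular coatom.

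The main obstacle is exactly this last inductive upgrade: deducing the full modular‑pair identity for the coatom $X$ from its validity on rank‑two flats (equivalently, from condition~(2)). This is the genuine combinatorial content of the statement, everything else being a mechanical translation through the join/meet dictionary plus the two rank computations above. The induction must exploit that in a geometric lattice every flat is a join of atoms and that covering relations obey the exchange property, so that adjoining a single atom to $Z'$ raises $r(X\wedge(\,\cdot\,))$ by at most one; the rank‑two hypothesis is what prevents it from stalling. I expect the care here to lie in splitting into the cases according to whether the adjoined atom lies below $X$ or not, and in ensuring the chosen atom keeps $Z'$ off $X$ when possible.
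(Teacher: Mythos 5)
The paper offers no proof of this proposition at all: it is quoted verbatim from \cite[Theorem 4.3]{BEZ90}, so there is no internal argument to compare yours against. Judged on its own merits, your proof is correct everywhere it is actually executed. The dictionary (join $=$ intersection, meet $=$ closure of the subspace sum, and $X+Y\in L(\A)$ iff $r(X\vee Y)+r(X\wedge Y)=r(X)+r(Y)$), the derivation of conditions (1) and (2) from modularity of a coatom, and the identification $\B=\A_X$ in the converse direction via $X\wedge(K\vee H')=K$ are all sound. One small imprecision: your claim that condition (2) says \emph{precisely} that every rank-two flat $Y\not\le X$ has $r(X\wedge Y)\ge 1$ needs the easy remark that if some atom below $Y$ lies in $\A_X$ this is automatic, while otherwise any two distinct atoms below $Y$ lie in $\A\setminus\A_X$ and (2) applies; since every rank-two element of a geometric lattice is the join of any two distinct atoms below it, this is harmless.

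The only genuine issue is the step you yourself flag and defer: upgrading the rank-two hypothesis to $r(X\wedge Z)\ge r(Z)-1$ for all $Z$. The plan is viable, but the mechanism you describe --- that adjoining an atom raises $r(X\wedge(\cdot))$ by \emph{at most} one, by exchange --- points the wrong way: in the critical case you must force it to rise by \emph{at least} one. Here is how the induction actually closes. For $Z\not\le X$ of rank $k\ge 3$, pick an atom $p\le Z$ with $p\not\le X$ (possible since $Z$ is a join of atoms) and a flat $Z'\le Z$ of rank $k-1$ with $p\not\le Z'$. If $Z'\le X$, then $X\wedge Z\ge Z'$ and you are done. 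Otherwise $r(X\wedge Z')=k-2$ by induction combined with submodularity, and you need one new atom: choose an atom $q\le Z'$ with $q\not\le X$, apply the rank-two hypothesis to the line $p\vee q\not\le X$ to obtain an atom $s\le X\wedge(p\vee q)\le X\wedge Z$, and rule out $s\le X\wedge Z'$ by computing $(p\vee q)\wedge Z'=q$: indeed $(p\vee q)\vee Z'=Z$, so submodularity gives $r\bigl((p\vee q)\wedge Z'\bigr)\le 2+(k-1)-k=1$, whence $(p\vee q)\wedge Z'=q$; so $s\le Z'$ would force $s=q\not\le X$, a contradiction. Then $r(X\wedge Z)\ge r\bigl((X\wedge Z')\vee s\bigr)=k-1$, completing the induction. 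So your reduction is right and the statement is safe, but the decisive trick --- manufacturing the new atom from a line straddling $Z'$ and an atom of $Z$ outside it, then separating it from $X\wedge Z'$ via $(p\vee q)\wedge Z'=q$ --- is precisely the content your sketch leaves unproved.
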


 \begin{definition}[Supersolvable arrangement]
 \label{def:supersolvable}
A central arrangement $\A$ of rank $r$ is called \textbf{supersolvable} if there exists a chain of arrangements, called an \tbf{M-chain},
$$\varnothing  =  \mathcal{A}_{0} \subseteq \mathcal{A}_{1} \subseteq \dots \subseteq \mathcal{A}_{r} = \A,$$
in which $ \mathcal{A}_{{i}} $ is a modular coatom of $ \mathcal{A}_{{i+1}} $ for each $0 \le i \le r-1$. 
\end{definition}

Next we describe a relationship between supersolvable and free arrangements.

 \begin{theorem}[{\cite[Theorem 4.2]{JT84}}]
 \label{thm:exp-ss}
If $\A$ is supersolvable, then $\A$ is free. 
Furthermore, if $\A$ has an \textrm{M}-chain $\varnothing  =  \mathcal{A}_{0} \subseteq \mathcal{A}_{1} \subseteq \dots \subseteq \mathcal{A}_{r} = \A,$ then
$\exp(\A) = \{0^{\ell-r(\A)}, d_1, \ldots, d_{r(\A)}\}$ where $d_i  \coloneqq  | \mathcal{A}_{{i}} \setminus  \mathcal{A}_{{i-1}}|$.
 \end{theorem}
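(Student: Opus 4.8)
The plan is to establish both assertions at once by inducting along the M-chain $\varnothing = \mathcal{A}_0 \subseteq \mathcal{A}_1 \subseteq \cdots \subseteq \mathcal{A}_r = \A$ furnished by supersolvability (Definition \ref{def:supersolvable}). Writing $\B = \mathcal{A}_{i-1}$ and $\A = \mathcal{A}_i$, it suffices to prove a single \emph{modular-coatom addition lemma}: if $\B$ is a modular coatom of $\A$ and $\B$ is free with $\exp(\B) = \{0^{\ell-r(\B)}, e_1, \ldots, e_{r(\B)}\}$, then $\A$ is free with $\exp(\A) = \{0^{\ell-r(\B)-1}, e_1, \ldots, e_{r(\B)}, |\A\setminus\B|\}$. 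Starting from the trivially free $\mathcal{A}_1$ (a single hyperplane, with $\exp(\mathcal{A}_1) = \{0^{\ell-1}, 1\}$) and applying the lemma $r-1$ times yields freeness of $\A = \mathcal{A}_r$ together with the exponent formula $\exp(\A) = \{0^{\ell-r(\A)}, d_1, \ldots, d_{r(\A)}\}$, where $d_i = |\mathcal{A}_i\setminus\mathcal{A}_{i-1}|$.

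To prove the lemma, I would use Proposition \ref{prop:BEZ} to write $\B = \A_X$ for a modular coatom $X \in L(\A)$, and induct on $d = |\A\setminus\B| \ge 1$. Order the hyperplanes of $\A\setminus\B$ as $H_1, \ldots, H_d$ and put $\mathcal{C}_t = \B \cup \{H_1, \ldots, H_t\}$. Since each $H_t \not\supseteq X$, adjoining any one such hyperplane raises the rank from $r(\B)$ to $r(\B)+1$, and the modularity condition Definition \ref{def:modco}(2) is inherited by every subarrangement, so $\B$ remains a modular coatom of each $\mathcal{C}_t$ with $t\ge1$. I would then show by induction on $t$ that $\mathcal{C}_t$ is free with $\exp(\mathcal{C}_t) = \{0^{\ell-r(\B)-1}, e_1, \ldots, e_{r(\B)}, t\}$. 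For the passage from $\mathcal{C}_{t-1}$ to $\mathcal{C}_t$ I apply the addition-deletion theorem \ref{thm:AD} to the triple $(\mathcal{C}_t, \mathcal{C}_{t-1}, \mathcal{C}_t^{H_t})$: the deletion $\mathcal{C}_{t-1}$ is free with top exponent $t-1$ by the inductive hypothesis, so once I establish that the restriction $\mathcal{C}_t^{H_t}$ is free with $\exp(\mathcal{C}_t^{H_t}) = \{0^{\ell-r(\B)-1}, e_1, \ldots, e_{r(\B)}\}$, the implication $(2)+(3)\Rightarrow(1)$ delivers freeness of $\mathcal{C}_t$ with the top exponent incremented to $t$.

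The crux is therefore the restriction $\mathcal{C}_t^{H_t}$, and I would handle it in two steps. First, the modular condition Definition \ref{def:modco}(2) forces $\mathcal{C}_t^{H_t} = \B^{H_t}$: for each $s<t$ there is $H'' \in \B$ with $H_s \cap H_t \subseteq H''$, and since $H_s \cap H_t$ and $H'' \cap H_t$ both have codimension $2$ in $V$, the inclusion $H_s \cap H_t \subseteq H'' \cap H_t$ is an equality, so every hyperplane of $\mathcal{C}_t^{H_t}$ arising from $\A\setminus\B$ already arises from $\B$. Second, because $\B = \A_X$ consists of hyperplanes containing $X$ while $H_t \not\supseteq X$, the product decomposition $\B = \B^{\mathrm{ess}} \times \emptyset_{\ell-r(\B)}$ shows $H_t$ is transversal to the empty factor, whence $\B^{H_t} \cong \B^{\mathrm{ess}} \times \emptyset_{\ell-r(\B)-1}$ is free with exponents $\{0^{\ell-r(\B)-1}, e_1, \ldots, e_{r(\B)}\}$; combining the two steps supplies exactly the input required above. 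I expect the main obstacle to be the careful verification of the restriction identity $\mathcal{C}_t^{H_t} = \B^{H_t}$ and of the transversality isomorphism, namely confirming that modularity of $X$ is used correctly so that no spurious hyperplanes enter, and no genuine hyperplanes drop out of, the restriction; the exponent bookkeeping through addition-deletion is then routine.
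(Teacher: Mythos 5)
The paper offers no proof of this statement---it is quoted directly from Jambu--Terao \cite{JT84}---and your argument is correct and is precisely the classical one: induct along the M-chain and, within each step, add the hyperplanes of $\mathcal{A}_i\setminus\mathcal{A}_{i-1}$ one at a time via Theorem \ref{thm:AD}, where Definition \ref{def:modco}(2) forces the restriction identity $\mathcal{C}_t^{H_t}=\B^{H_t}$ (two codimension-two flats, one contained in the other, must coincide), and choosing $v\in X\setminus H_t$ yields $\B\cong\B^{H_t}\times\emptyset_1$, so the restriction is free with exponents $\{0^{\ell-1-r(\B)},e_1,\ldots,e_{r(\B)}\}$ as your addition step requires. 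This is also exactly the mechanism the paper itself packages and repeatedly invokes as the modular coatom technique, Proposition \ref{prop:modular coatom}(1).
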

 
 The theorem below says supersolvability and freeness are closed under taking localization. 
 
  \begin{theorem}[{\cite[Proposition 3.2]{St72}} and  {\cite[Theorem 4.37]{OT92}}]
 \label{thm:localization-ss-free}
 If $\A$ is  supersolvable (resp., free), then the localization $\A_X$ is  supersolvable (resp., free) for  any $X \in L(\A)$.
 \end{theorem}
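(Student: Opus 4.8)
The plan is to treat the two assertions separately, handling freeness by localizing the derivation module and supersolvability by restricting a modular chain in the intersection poset. Throughout I fix $X \in L(\A)$, set $r = \codim X$, and choose coordinates so that $X = V(x_1, \dots, x_r)$; then a hyperplane $H \in \A$ satisfies $X \subseteq H$ exactly when its defining form $\alpha_H$ lies in the prime ideal $\mathfrak{p}_X = (x_1, \dots, x_r) \subseteq S$. The key observation, valid for any central $\A$, is the localization identity $D(\A)_{\mathfrak{p}_X} = D(\A_X)_{\mathfrak{p}_X}$: for $H \notin \A_X$ we have $\alpha_H \notin \mathfrak{p}_X$, so $\alpha_H$ becomes a unit in $S_{\mathfrak{p}_X}$ and the defining condition $\theta(\alpha_H) \in \alpha_H S_{\mathfrak{p}_X}$ becomes vacuous; thus only the hyperplanes through $X$ survive after localizing.

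For the freeness part I would proceed as follows. First reduce to the essential case: writing $\A_X = \A_X^{\mathrm{ess}} \times \emptyset_{\ell-r}$ and using that essentialization preserves freeness, the arrangement $\A_X$ is free if and only if $\B := \A_X^{\mathrm{ess}}$ is free, an essential arrangement in $\K^r$ with coordinate ring $S' = \K[x_1, \dots, x_r]$ and graded maximal ideal $\mathfrak{m} = (x_1, \dots, x_r)$. If $\A$ is free then $D(\A)$ is a free $S$-module, hence so is its localization $D(\A)_{\mathfrak{p}_X} = D(\A_X)_{\mathfrak{p}_X}$; since $\A_X = \B \times \emptyset_{\ell-r}$, faithfully flat descent along the local homomorphism $S'_{\mathfrak{m}} \to S_{\mathfrak{p}_X}$ yields that $D(\B)_{\mathfrak{m}}$ is free over $S'_{\mathfrak{m}}$. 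The crucial point is that $\mathfrak{p}_X$ corresponds under essentialization precisely to the graded maximal ideal $\mathfrak{m}$ of $S'$. Since $D(\B)$ is a finitely generated graded $S'$-module, a minimal graded free resolution localizes to a minimal free resolution over $S'_{\mathfrak{m}}$ (minimality is preserved because the differentials have entries in $\mathfrak{m}$); freeness of $D(\B)_{\mathfrak{m}}$ forces this resolution to have length zero, so $D(\B)$ is free and $\A_X$ is free.

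For supersolvability I would argue at the level of the intersection poset. One checks that $L(\A_X)$ is the lower interval $[\hat 0, X]$ of $L(\A)$, since the flats of $\A_X$ are exactly the flats of $\A$ containing $X$. It therefore suffices to show that a lower interval of a supersolvable geometric lattice is again supersolvable. Starting from a maximal chain of modular elements $\hat 0 = Y_0 \lessdot Y_1 \lessdot \cdots \lessdot Y_r = \hat 1$ witnessing supersolvability of $L(\A)$, I would take the meets $Y_i \wedge X$. Each $Y_i \wedge X$ is a modular element of the interval $[\hat 0, X]$, since the meet of a modular element with a fixed flat is modular in the corresponding lower interval; and the chain $\hat 0 = Y_0 \wedge X \le Y_1 \wedge X \le \cdots \le Y_r \wedge X = X$ is saturated because $\rk$ increases by at most one at each step. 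Deleting repetitions produces a maximal chain of modular elements in $[\hat 0, X] = L(\A_X)$, which, translated back through Proposition \ref{prop:BEZ}, yields an M-chain for $\A_X$; the exponent statement then follows from Theorem \ref{thm:exp-ss}.

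The main obstacle is the passage from local to global freeness in the derivation-module argument: freeness of $D(\A_X)$ at the single prime $\mathfrak{p}_X$ does not imply freeness for a general module, and the argument genuinely relies on $D(\B)$ being graded together with the identification of $\mathfrak{p}_X$ with the graded maximal ideal after essentialization. On the supersolvable side, the delicate point is verifying that meeting with $X$ keeps the modular chain saturated and that modularity is inherited by the lower interval; both are standard properties of modular elements in geometric lattices, but they must be applied with care to the rank bookkeeping.
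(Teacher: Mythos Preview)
The paper does not supply its own proof of this theorem; it simply records the statement with references to Stanley \cite{St72} for supersolvability and Orlik--Terao \cite{OT92} for freeness. So there is nothing in the paper to compare against directly.

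Your argument is essentially the standard one found in those sources and is correct as a sketch. For freeness you reproduce the line of reasoning behind \cite[Theorem~4.37]{OT92}: the identity $D(\A)_{\mathfrak p_X}=D(\A_X)_{\mathfrak p_X}$, the product decomposition $D(\A_X)\cong (D(\B)\otimes_{S'}S)\oplus S^{\ell-r}$ coming from $\A_X=\B\times\emptyset_{\ell-r}$, faithful flatness of $S'_{\mathfrak m}\to S_{\mathfrak p_X}$, and finally graded Nakayama to pass from freeness at the irrelevant ideal to global freeness. One small point worth making explicit is that you use the product decomposition of $D(\A_X)$ to strip off the free summand before descending; as written, ``faithfully flat descent'' alone does not directly give you $D(\B)_{\mathfrak m}$ free from $D(\A_X)_{\mathfrak p_X}$ free without first isolating the $D(\B)$ piece. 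For supersolvability your approach via meeting a modular maximal chain with $X$ is exactly Stanley's argument; the two facts you label ``standard'' (that $Y\wedge X$ is modular in $[\hat 0,X]$ whenever $Y$ is modular in $L$, and the rank bookkeeping $\rk(Y_{i+1}\wedge X)-\rk(Y_i\wedge X)\in\{0,1\}$ via the modular rank identity) are indeed the content of \cite[Proposition~3.2]{St72}, so the sketch is sound.
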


The following well-known property of modular coatoms (see e.g., \cite[Proposition 3.3]{MRT23}) will be used often in the proofs of our results to come.
 \begin{proposition}[Modular coatom technique] 
 \label{prop:modular coatom}
 Let $\A$ be a central arrangement and let  $\B\subseteq \A$ be a modular coatom of $\A$. 
The following statements hold.
\begin{enumerate}[(1)]
\item $ \mathcal{A} $ is supersolvable (resp., free) if and only if $ \B $ is supersolvable (resp., free).
In this case, $ \exp (\A^{\mathrm{ess}}) = \exp (\B^{\mathrm{ess}} ) \cup \{|\mathcal{A}\setminus\B|\} $. 
\item  If  $\B$ is flag-accurate whose exponents do not exceed  $|\A\setminus\B|$, then $ \A $ is flag-accurate.
\end{enumerate}
\end{proposition}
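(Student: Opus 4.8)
The plan is to reduce every assertion to the results already assembled in this subsection, isolating the single genuinely new computation. Throughout write $r \coloneqq r(\A)$ and $s \coloneqq |\A\setminus\B|$, and use Proposition \ref{prop:BEZ} to fix a coatom $X \in L(\A)$ with $\B = \A_X$, so that $r(\B) = r-1$ and the center $X = \bigcap_{K\in\B}K$ has dimension $\ell - r + 1$. The two implications ``$\A \Rightarrow \B$'' in (1) are then immediate: since $\B = \A_X$ is a localization, Theorem \ref{thm:localization-ss-free} shows at once that $\B$ is supersolvable (resp.\ free) whenever $\A$ is. For the supersolvable implication ``$\B \Rightarrow \A$'' I would take an M-chain $\varnothing = \B_0 \subseteq \cdots \subseteq \B_{r-1} = \B$ witnessing supersolvability of $\B$ and simply append $\A$; because $\B$ is a modular coatom of $\A$, the extended chain $\varnothing = \B_0 \subseteq \cdots \subseteq \B_{r-1} = \B \subseteq \A$ is an M-chain of $\A$, and Theorem \ref{thm:exp-ss} records that the final step contributes the exponent $s$.

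The crux is the free implication ``$\B \Rightarrow \A$'' together with the exponent formula, and everything rests on one structural observation: for any $H \in \A\setminus\B$ one has $\A^H = \B^H$. Indeed $\A_H = \{H\}$, so $\A^H = \{K\cap H \mid K \in \A,\ K\neq H\}$; the hyperplanes coming from $\B$ give exactly $\B^H$, while for $K \in \A\setminus\B$ with $K \neq H$ the modular coatom condition produces $H'' \in \B$ with $K\cap H \subseteq H'' \cap H$, and since both sides have codimension $1$ in $H$ they coincide, so no new hyperplane is created. Next I would show that $\B^H$ is free with $\chi_{\B^H}(t) = \chi_{\B}(t)/t$. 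This follows because $H$ is transverse to the center $X$: the projection $V \to V/X$ restricts to a surjection $H \to V/X$ whose fibers exhibit $\B^H$ as the pullback of $\B^{\mathrm{ess}}$, i.e.\ $\B^H \cong \B^{\mathrm{ess}} \times \emptyset_{\dim X - 1}$. Hence $\B^H$ is free, $\exp(\B^H) = \exp(\B^{\mathrm{ess}}) \cup \{0^{\dim X - 1}\}$, and its characteristic polynomial is that of $\B$ with one factor of $t$ removed; in particular it is independent of the chosen $H$.

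With this in hand the freeness of $\A$ follows from the division theorem. Ordering $\A\setminus\B = \{H_1,\dots,H_s\}$ and applying the identity $\A^H = \B^H$ to each intermediate arrangement $\B\cup\{H_1,\dots,H_j\}$, the deletion–restriction formula (Theorem \ref{thm:DR}) telescopes to $\chi_{\A}(t) = \chi_{\B}(t) - s\,\chi_{\B^H}(t) = (t-s)\,\chi_{\B^H}(t)$. Since $\A^H = \B^H$ is free and its characteristic polynomial divides $\chi_\A$, Theorem \ref{thm:DT} yields that $\A$ is free, and Theorem \ref{thm:Factorization} forces $\exp(\A^{\mathrm{ess}}) = \exp(\B^{\mathrm{ess}}) \cup \{s\}$, completing (1). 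For part (2), assume $\B$ is flag-accurate with every exponent at most $s$; by (1) the arrangement $\A$ is free with $\exp(\A^{\mathrm{ess}}) = \exp(\B^{\mathrm{ess}}) \cup \{s\}$, so $s$ is the largest exponent. I would build the required flag inside a fixed $H \in \A\setminus\B$ by setting $X_{r-1} \coloneqq H$, so that $\A^{X_{r-1}} = \B^H \cong \B^{\mathrm{ess}} \times \emptyset_{\dim X - 1}$ inherits flag-accuracy from $\B$; taking the flag $X_1 \subseteq \cdots \subseteq X_{r-1} = H$ witnessing flag-accuracy of $\B^H$ and using transitivity of restriction $\A^{X_i} = (\A^H)^{X_i}$, each $\A^{X_i}$ acquires the prescribed freeness and exponents for $i < r$. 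Appending $X_r = V$ (where $\A^{X_r} = \A$) gives a full flag, and the bound on the exponents of $\B$ guarantees the sorted-order condition of Definition \ref{def:FA-arr}.

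The main obstacle, and the only step requiring real work, is the structural lemma $\A^H = \B^H$ together with the identification $\B^H \cong \B^{\mathrm{ess}} \times \emptyset_{\dim X - 1}$; once these are secured, the characteristic-polynomial bookkeeping and the division/factorization machinery do the rest, and the supersolvable and flag-accurate assertions follow formally.
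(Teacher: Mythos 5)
Your proof is correct, and it is worth noting at the outset that the paper itself does not prove this proposition --- it is quoted as a known fact with a pointer to \cite[Proposition 3.3]{MRT23} --- so the relevant comparison is with the standard argument given there. Your proof runs on the same skeleton: Theorem \ref{thm:localization-ss-free} handles the direction $\A \Rightarrow \B$ since $\B = \A_X$ is a localization (Proposition \ref{prop:BEZ}); appending $\A$ to an M-chain of $\B$ handles supersolvability; and the crux is exactly the structural identity $\A^H = \B^H$ for $H \in \A\setminus\B$ (your codimension argument forcing $K\cap H = H''\cap H$ is right) together with the transversality $H + X = V$, which identifies $\B^H$ with $\B^{\mathrm{ess}} \times \emptyset_{\dim X - 1}$ and gives $\chi_{\B^H}(t) = \chi_\B(t)/t$. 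Where you genuinely deviate is in the freeness transfer $\B \Rightarrow \A$: the standard route adds the hyperplanes of $\A\setminus\B$ one at a time and invokes the addition part of Theorem \ref{thm:AD} at each step, tracking the incrementing exponent; you instead telescope deletion--restriction once to get $\chi_\A(t) = (t-s)\,\chi_\B(t)/t$ with $s = |\A\setminus\B|$, apply Abe's division theorem (Theorem \ref{thm:DT}) in a single shot, and recover the exponent formula from Terao's factorization (Theorem \ref{thm:Factorization}). This buys lighter bookkeeping at the cost of a stronger tool; note that your telescoping still implicitly relies on the same step-by-step observation, namely that $\B$ remains a modular coatom of each intermediate arrangement $\B \cup \{H_1,\dots,H_j\}$, which you do justify. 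Your treatment of (2) --- placing $H$ at the top of the flag below $V$, using transitivity of restriction $\A^{X_i} = (\A^H)^{X_i}$, and observing that the hypothesis $\exp(\B) \le s$ is precisely what validates the sorted-order requirement of Definition \ref{def:FA-arr} --- matches the reference's construction. The only point you leave implicit is that flag-accuracy transports between $\B$, $\B^{\mathrm{ess}}$, and $\B^{\mathrm{ess}} \times \emptyset_{\dim X - 1}$ (moving a flag through a product with an empty factor), which is routine from the definition and not a genuine gap.
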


Two (central) hyperplane arrangements $\A$ and $\A'$ in $V$ are said to be \textbf{(linearly) affinely equivalent} if there is an invertible (linear) affine endomorphism $\varphi: V \to V$ such that $\A'=\varphi(\A)=\{\varphi(H)\mid H\in \A\}$. 
In particular, the intersection posets of two affinely equivalent arrangements are isomorphic. 
One can prove that the supersolvability, flag-accuracy and freeness all are preserved under linear equivalence. 
In the rest of the paper, we will often identify affinely equivalent arrangements. 
Note that for  two  affinely equivalent noncentral arrangements $\A$ and $\A'$, the cones $\cc\A$ and $\cc\A'$ are linearly equivalent. 

\subsection{Multiarrangements}
\label{subsec:multi}
\quad
A \tbf{multiarrangement} is a pair $(\A, m)$ where $\A$ is a central arrangement in $V=\K^\ell$ and $m$ is a map $m : \A \to \Z_{\ge0}$, called \tbf{multiplicity}. 
Let $(\A, m)$ be a multiarrangement.
The defining polynomial $Q(\A,m)$ of $(\A, m)$ is given by 
$$Q(\A,m) \coloneqq  \prod_{H \in \A} \alpha^{m(H)}_H \in S= \mathbb{K}[x_{1}, \dots, x_{\ell}].$$
When $m(H) = 1$ for every $H \in \A$, $(\A,m)$ is simply a hyperplane arrangement. 
The \tbf{module $D(\A,m)$ of logarithmic derivations} of  $(\A, m)$ is defined by
$$D(\A,m) \coloneqq   \{ \theta\in \Der(S) \mid \theta(\alpha_H) \in \alpha^{m(H)}_HS \mbox{ for all } H \in \A\}.$$

We say that $(\A, m)$  is \tbf{free}  with the multiset $ \exp(\A, m) = \{d_{1}, \dots, d_{\ell}\} $ of \textbf{exponents}  if $D(\A,m)$ is a free $S$-module with a homogeneous basis $ \{\theta_{1}, \dots, \theta_{\ell}\}$  such that $ \deg \theta_{i} = d_{i} $ for each $ i $. 
It is known that  $(\A, m)$ is always free for $\ell\le 2$ \cite[Corollary 7]{Z89}.

For $\theta_1, \ldots , \theta_\ell \in D(\A,m)$, we define the $(\ell \times\ell)$-matrix $M(\theta_1, \ldots , \theta_\ell)$ as the matrix with $(i,j)$-th entry $\theta_j(x_i)$. 
\begin{theorem}[{\cite[Theorem 4.19]{OT92}, \cite[Theorem 8]{Z89}}]
\label{thm:criterion}
Let $\theta_1, \ldots , \theta_\ell \in D(\A,m)$. Then $\{\theta_1, \ldots , \theta_\ell\}$ forms a basis for $D(\A,m)$ if and only if
$$\det M(\theta_1, \ldots , \theta_\ell) \in \K^* \cdot Q(\A,m).$$
\end{theorem}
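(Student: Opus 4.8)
The statement is the multiarrangement form of Saito's criterion, so the plan is to reduce both implications to the behaviour of the \emph{Saito determinant} $\det M$ along the hyperplanes of $\A$. Throughout I identify a derivation $\theta$ with its coefficient vector $(\theta(x_1),\dots,\theta(x_\ell))$ relative to the free basis $\partial/\partial x_1,\dots,\partial/\partial x_\ell$ of $\Der(S)$, so that the $j$-th column of $M=M(\theta_1,\dots,\theta_\ell)$ is exactly the coefficient vector of $\theta_j$, and $D(\A,m)$ is a rank-$\ell$ submodule of $\Der(S)$.

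The first step is a divisibility lemma: if $\theta_1,\dots,\theta_\ell\in D(\A,m)$ then $Q(\A,m)\mid\det M$. For a fixed $H\in\A$ I would make a linear change of coordinates sending $\alpha_H$ to $x_1$; then every entry of the first row, namely $\theta_j(\alpha_H)$, lies in $\alpha_H^{m(H)}S$ by the definition of $D(\A,m)$, so $\alpha_H^{m(H)}\mid\det M$. Since distinct $\alpha_H$ are pairwise coprime in the UFD $S$ and the coordinate change alters $\det M$ only by a nonzero constant, taking the product over all $H$ yields $Q(\A,m)=\prod_H\alpha_H^{m(H)}\mid\det M$.

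For the easy implication, where $\det M=cQ(\A,m)$ with $c\in\K^*$ is assumed, nonvanishing of $\det M$ first gives linear independence of the $\theta_j$ over $\operatorname{Frac}(S)$. Given any $\theta\in D(\A,m)$, I would solve $\theta=\sum_j f_j\theta_j$ by Cramer's rule, obtaining $f_j=\det M_j/\det M$ where $M_j$ is $M$ with its $j$-th column replaced by the coefficient vector of $\theta$. Each $M_j$ is the Saito matrix of a family lying entirely in $D(\A,m)$, so the divisibility lemma gives $Q(\A,m)\mid\det M_j$; combined with $\det M=cQ(\A,m)$ this forces $f_j\in S$. Hence the $\theta_j$ generate $D(\A,m)$, and together with independence they form a basis.

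The converse is the main obstacle: a basis must force $\det M=cQ(\A,m)$ exactly, meaning the exponent of each $\alpha_H$ in $\det M$ is \emph{precisely} $m(H)$ and no extraneous factors occur. I plan to handle this by localization. Inverting $\prod_H\alpha_H$ makes every divisibility constraint vacuous, so $D(\A,m)$ and $\Der(S)$ agree after this localization and $\det M$ becomes a unit there; since $\det M\in S$ and $S$ is a UFD, this shows $\det M=c\prod_H\alpha_H^{k_H}$ with $c\in\K^*$ and $k_H\ge m(H)$, the lower bound coming from the divisibility lemma. To pin down $k_H=m(H)$ I would instead localize at the height-one prime $(\alpha_H)$: over this discrete valuation ring all conditions except the one at $H$ disappear, and a direct computation taking $\alpha_H$ as a local coordinate shows $D(\A,m)_{(\alpha_H)}$ is free with a local basis whose Saito determinant has $\alpha_H$-adic valuation exactly $m(H)$. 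As $\{\theta_j\}$ localizes to a basis of this same free module, the two Saito determinants differ by a unit, giving valuation exactly $m(H)$ at each $H$; combining the two localizations yields $\det M=cQ(\A,m)$. In the graded case a convenient consistency check is the degree count $\deg\det M=\sum_j\deg\theta_j$, which must equal $\deg Q(\A,m)=\sum_H m(H)$.
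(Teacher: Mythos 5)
The paper does not prove this statement at all—it is quoted verbatim from the cited sources (Ziegler's Theorem 8 and Orlik--Terao's Theorem 4.19)—so there is no internal proof to compare against; your argument is correct and is essentially the classical Saito--Ziegler proof found in those references: the divisibility lemma $Q(\A,m)\mid\det M$ plus Cramer's rule give sufficiency, and for necessity you rule out extraneous irreducible factors by inverting $Q(\A,m)$ and pin the exact valuation $m(H)$ of $\det M$ at each $\alpha_H$ by localizing at the height-one prime $(\alpha_H)$, where $D(\A,m)$ becomes free with local basis $\alpha_H^{m(H)}\partial_1,\partial_2,\ldots,\partial_\ell$ in coordinates with $\alpha_H=x_1$. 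The only steps you leave implicit—that localization commutes with the definition of $D(\A,m)$ (via the sandwich $Q(\A,m)\,\Der(S)\subseteq D(\A,m)\subseteq\Der(S)$, which collapses after inverting the relevant forms), and that distinct hyperplanes give pairwise coprime linear forms—are routine, so the proof is complete and faithful to the cited classical argument.
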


Let $H \in \A$. The \tbf{Ziegler restriction} $(\A^H,m^H)$ of $\A$ onto $H$ is a multiarrangement defined by 
$$m^H(X) \coloneqq |\A_X|-1 \quad \mbox{for } X \in \A^H.$$ 

We will need the following characterization for freeness of a simple arrangement in dimension $3$. 
\begin{theorem}[{\cite[Corollary 3.3]{Yo05}}]
 \label{thm:Yo-3arr}
A central arrangement $ \mathcal{A}$ in $\mathbb{K}^3$ is free if and only if 
$$ \chi_{\mathcal{A}}(t) = (t-1)(t-d_2)(t-d_3),$$
where $\exp (\A^H, m^H)=\{d_2, d_3\}$ with $H \in \A$.
\end{theorem}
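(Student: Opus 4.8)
The statement is an equivalence, so the plan is to separate the two directions after one common reduction. First I would pass to the essential case: since $\A=\A^{\mathrm{ess}}\times\emptyset_{3-r}$ preserves freeness and only multiplies $\chi_\A(t)$ by a power of $t$, I may assume $\A$ is essential of rank $3$. For such an arrangement the M\"obius values over the maximal flat $\{0\}$ sum to zero, so $\chi_\A(1)=0$; hence $(t-1)\mid\chi_\A(t)$, and comparing the coefficient of $t^2$ with $-|\A|$ gives $\chi_\A(t)=(t-1)\bigl(t^2-(|\A|-1)t+b_2\bigr)$ for a well-defined integer $b_2$. On the other side, the Ziegler restriction $(\A^H,m^H)$ lives on $H\cong\K^2$ and is therefore automatically free \cite{Z89}, say $\exp(\A^H,m^H)=\{d_2,d_3\}$, with total multiplicity $\sum_{X}(|\A_X|-1)=|\A|-1=d_2+d_3$. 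Thus the claimed identity $\chi_\A(t)=(t-1)(t-d_2)(t-d_3)$ is \emph{equivalent to the single equation} $b_2=d_2 d_3$, because the linear coefficients already agree; this numerical equivalence is what I will actually establish.

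The forward implication is quick. If $\A$ is free, the Factorization Theorem~\ref{thm:Factorization} gives $\chi_\A(t)=(t-1)(t-e_2)(t-e_3)$ with $\exp(\A)=\{1,e_2,e_3\}$, the exponent $1$ coming from the Euler derivation $\theta_E=\sum_i x_i\,\partial/\partial x_i\in D(\A)$. Ziegler's restriction theorem \cite{Z89} then says that the Ziegler restriction of a free arrangement is again free, its exponents being those of $\A$ with the Euler exponent $1$ removed; hence $\{d_2,d_3\}=\{e_2,e_3\}$ and $\chi_\A(t)$ has exactly the asserted shape.

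The real content is the converse: assuming $b_2=d_2 d_3$, I must produce freeness. The tool is the natural restriction map $\rho\colon D(\A)\to D(\A^H,m^H)$ that sends a logarithmic derivation to its restriction along $H$. Modulo the Euler derivation this induces an injection $\bar\rho$ of a rank-$2$ reflexive $S$-module into the free rank-$2$ Ziegler module, and one has the dichotomy: $\A$ is free if and only if $\bar\rho$ is onto (equivalently, an isomorphism). The plan is to show that the total dimension of $\mathrm{coker}\,\bar\rho$ equals exactly $b_2-d_2 d_3$; granting this, the hypothesis forces $\mathrm{coker}\,\bar\rho=0$, so $\bar\rho$ is an isomorphism and $\A$ is free.

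The main obstacle is establishing both the nonnegativity $b_2-d_2 d_3\ge 0$ and its identification with $\dim_\K\mathrm{coker}\,\bar\rho$, which is the heart of Yoshinaga's criterion \cite{Yo04}. I would prove it by projectivizing: the sheafification $E=\widetilde{D_0(\A)}$ of the derivations modulo Euler is a rank-$2$ reflexive, hence locally free, sheaf on the smooth surface $\mathbb{P}^2=\mathbb{P}(V)$, with $c_1(E)=-(d_2+d_3)$ and $c_2(E)=b_2$; restricting to the line $L=\mathbb{P}(H)$ recovers $(\A^H,m^H)$, so $E|_L\cong\mathcal{O}_L(-d_2)\oplus\mathcal{O}_L(-d_3)$. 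Feeding $0\to E(-1)\to E\to E|_L\to 0$ into cohomology turns $\mathrm{coker}\,\bar\rho$ into $H^1_*(E)=\bigoplus_j H^1(\mathbb{P}^2,E(j))$, and a degreewise count yields $\dim_\K H^1_*(E)=b_2-d_2 d_3\ge 0$; the vanishing of $H^1_*(E)$ is precisely the Horrocks criterion for $E$ to split as $\mathcal{O}(-d_2)\oplus\mathcal{O}(-d_3)$, i.e.\ for $\A$ to be free. Two points need care: identifying the Ziegler exponents $\{d_2,d_3\}$ with the splitting type of $E|_L$, and checking that reflexivity makes $D_0(\A)=\Gamma_*(E)$ so that graded freeness of the module is genuinely equivalent to splitting of the bundle. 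An equivalent, sheaf-free rendering replaces this cohomology by an explicit degreewise analysis of $\rho$; I expect the bundle formulation to be the cleanest to carry out.
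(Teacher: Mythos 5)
You should first note a structural point: the paper contains no proof of Theorem~\ref{thm:Yo-3arr} at all --- it is quoted verbatim from Yoshinaga \cite[Corollary 3.3]{Yo05} and used as a black box (in Propositions~\ref{prop:N-l=2} and \ref{prop:N-l=2-relax}), so the only meaningful comparison is with Yoshinaga's original argument. Your outline does reconstruct that argument's architecture faithfully: the forward direction via Terao's factorization (Theorem~\ref{thm:Factorization}) together with Ziegler's restriction theorem, and the converse via the restriction map to the Ziegler restriction (automatically free in dimension two by \cite[Corollary 7]{Z89}), a surjectivity criterion in the spirit of \cite{Yo04}, and a count identifying the defect with $b_2-d_2d_3$. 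Note, however, that the identification $c_2(E)=b_2$, which you state as if definitional, is itself a nontrivial theorem (due to Musta\c{t}\u{a}--Schenck, relating $\chi_{\A}(t)$ to the Chern classes of the sheafified derivation module); asserting it without proof or citation leaves a genuine hole at the base of your computation.

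Beyond that, two concrete errors. First, the map $\rho\colon D(\A)\to D(\A^H,m^H)$ is not well defined on all of $D(\A)$: the Euler derivation restricts to the Euler derivation of $H$, which violates the order-$m^H(X)$ tangency whenever some $m^H(X)\ge 2$. One must work with $D_H(\A)=\{\theta\in D(\A)\mid \theta(\alpha_H)=0\}\cong D(\A)/S\theta_E$ (your ``modulo the Euler derivation'' gestures at this), but even then your claimed \emph{injection} $\bar\rho$ is false at the module level: the kernel of $D_H(\A)\to D(\A^H,m^H)$ is $\alpha_H D_H(\A)$, and the correct object is the four-term exact sequence $0\to D_H(\A)(-1)\xrightarrow{\alpha_H} D_H(\A)\xrightarrow{\rho} D(\A^H,m^H)\to M\to 0$. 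Second, the asserted identity $\dim_\K H^1_*(E)=b_2-d_2d_3$ cannot hold: the left side is independent of the choice of $H\in\A$, whereas for a non-free $\A$ the Ziegler exponents, hence the product $d_2d_3$, genuinely vary with $H$ (this is the jumping-line phenomenon). What the twisted long exact sequences actually give is $M_j\cong\ker\bigl(\alpha_H\colon H^1(E(j-1))\to H^1(E(j))\bigr)$, so $\dim_\K M=\dim\ker(\alpha_H|_{H^1_*(E)})\le \dim_\K H^1_*(E)$, with the correct, line-dependent count being $\dim_\K M=b_2-d_2d_3$, obtained by Riemann--Roch bookkeeping on $0\to E(-1)\to E\to E|_L\to 0$. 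Your overall logic is then salvageable --- $M=0$ forces $\alpha_H$ to act injectively on the finite-length module $H^1_*(E)$, hence $H^1_*(E)=0$, hence $E$ splits by Horrocks and $\A$ is free --- but as written the quantitative heart of your converse rests on a false identity and an unproved Chern-class input, both of which need the repairs indicated above.
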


\subsection{Characteristic quasi-polynomials}
\label{subsec:multi}
\quad
A function $\varphi: \Z \to \C$ is called a \tbf{quasi-polynomial} if there exist a positive integer $\rho\in\Z_{>0}$ and polynomials $f^k(t)\in\mathbb{Q}[t]$ ($1 \le k \le \rho$) such that for any $q\in\Z_{>0}$ with  $q\equiv k\bmod \rho$, 
\begin{equation*}
\varphi(q) =f^k(q).
\end{equation*}

The number $\rho$ is called a \tbf{period}, and the polynomial $f^k(t)$ is called the \tbf{$k$-constituent} of the quasi-polynomial $\varphi$. 
The smallest such $\rho$ is called the \tbf{minimum period} of  $\varphi$. 
The minimum period is necessarily a divisor of any period.

Let $\ell,n\in\Z_{>0}$ be positive integers. 
Denote by $\Mat_{\ell\times n}(\Z)$ the set of all $\ell\times n$ matrices with integer entries. 
Let $C=(c_1,\ldots,c_n) \in \Mat_{\ell\times n}(\Z)$ with nonzero column and let $b=(b_1,\ldots,b_n) \in\Z^n$. 
Set $A  \coloneqq  \begin{pmatrix} C\\ b\end{pmatrix}  \in \Mat_{(\ell + 1)\times n}(\Z)$. 
The matrix $A$ defines the following hyperplane arrangement, called \tbf{integral} arrangement in $\R^\ell$
$$\A=\A(A)   \coloneqq  \{ H_j : 1\le j\le n\},$$
where
$$H_j = H_{c_j}   \coloneqq  \{x\in \R^\ell\mid xc_j=b_j\}.$$

Let $q\in\Z_{>0}$ and $\Z_q  \coloneqq \Z/q\Z$. 
 For $a \in \Z$, let $\overline{a}  \coloneqq  a + q\Z \in \Z_q$ denote the $q$-reduction of $a$. 
For a matrix or vector $A'$ with integral entries, denote by $\overline{A'}$ the entry-wise $q$-reduction of $A'$.

The \tbf{$q$-reduction} $\A_q$ of $\A$ is defined by 
$$\A_q =\A_q (A) \coloneqq  \{ H_{j,q} \mid 1\le j\le n\},$$
where
$$H_{j,q}  \coloneqq  \{ z \in \Z_q^\ell \mid z\overline{c_j} =\overline{b_j}\}.$$

Denote $\Z_q^{\times} \coloneqq \Z_q \setminus \{0\}$.
The \tbf{complement} $\M(\A_q)$ of $\A_q$ is defined by 
$$\M(\A_q)  \coloneqq  \Z_q^\ell \setminus\bigcup_{j=1}^n H_{j,q} = \{z\in \Z_q^\ell \mid z\overline{C} -\overline{b} \in (\Z_q^{\times})^n\}.$$

For $\emptyset \ne J \subseteq [n]$, let $C_J \in \Mat_{\ell\times \#J}(\Z)$ denote the submatrix of $C$ consisting of the columns indexed by $J$.
Set $r(J) \coloneqq \rk C_J$.
Let $0<e_{J,1} \mid e_{J,2} \mid \cdots \mid e_{J,r(J)}$ be the elementary divisors of $C_J$.

\begin{definition}
\label{def:lcm}
Define the \tbf{lcm period} of $C$ by
$$\rho_C \coloneqq  \lcm \{e_{J,r(J)} \mid \emptyset \ne J \subseteq [n]\}.$$
\end{definition}

\begin{theorem}[{\cite[Theorem 2.4]{KTT08}, \cite[Theorem 3.1]{KTT11}}]
\label{thm:KTT}
There exists a monic quasi-polynomial $\chi^{\quasi}_{\A}(q)$ of degree $\ell$ with a period $\rho_{C}$ such that  for sufficiently large $q$,
$$|\M(\A_q) | = \chi^{\quasi}_{\A}(q).$$ 
This quasi-polynomial is called the \tbf{characteristic quasi-polynomial} of $\A$. 
\end{theorem}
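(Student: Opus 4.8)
The plan is to compute $|\M(\A_q)|$ directly by inclusion--exclusion and to evaluate each intersection term through the Smith normal form of the relevant submatrix of $C$, so that the resulting count visibly becomes a quasi-polynomial whose period is governed by the largest elementary divisors $e_{J,r(J)}$ appearing in $\rho_C$. First I would fix, for each $\emptyset \ne J \subseteq [n]$, the intersection count $N_J(q) \coloneqq |\{ z \in \Z_q^\ell \mid z\overline{c_j} = \overline{b_j} \text{ for all } j \in J\}|$, and set $N_\emptyset(q) \coloneqq q^\ell$. Since $\M(\A_q) = \Z_q^\ell \setminus \bigcup_{j=1}^n H_{j,q}$, inclusion--exclusion over the $n$ hyperplanes gives
$$|\M(\A_q)| = \sum_{J \subseteq [n]} (-1)^{|J|} N_J(q),$$
so the whole statement reduces to understanding each $N_J(q)$, namely the number of solutions in $\Z_q^\ell$ of the system of linear congruences $z C_J \equiv b_J \pmod q$.

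Next I would diagonalize. Writing $C_J = U D V$ with $U \in \mathrm{GL}_\ell(\Z)$, $V \in \mathrm{GL}_{\#J}(\Z)$, and $D$ the Smith normal form of $C_J$ whose nonzero diagonal entries are the elementary divisors $e_{J,1} \mid \cdots \mid e_{J,r(J)}$, the substitution $w = zU$ is a bijection of $\Z_q^\ell$ and turns the system into the diagonal congruences $e_{J,i}\,w_i \equiv b'_i \pmod q$ for $1 \le i \le r(J)$, together with the kernel conditions $0 \equiv b'_i \pmod q$ for $r(J) < i \le \#J$, where $b' \coloneqq b_J V^{-1} \in \Z^{\#J}$. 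The variables $w_{r(J)+1}, \dots, w_\ell$ are free, contributing $q^{\ell - r(J)}$, while each single congruence $e_{J,i}\,w_i \equiv b'_i \pmod q$ is solvable exactly when $\gcd(e_{J,i},q) \mid b'_i$ and then has precisely $\gcd(e_{J,i},q)$ solutions. Hence, whenever all consistency conditions hold,
$$N_J(q) = q^{\ell - r(J)} \prod_{i=1}^{r(J)} \gcd(e_{J,i},q),$$
and $N_J(q) = 0$ otherwise.

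Then I would read off periodicity and degree. For each $i \le r(J)$ both the factor $\gcd(e_{J,i},q)$ and the condition $\gcd(e_{J,i},q) \mid b'_i$ depend only on $q$ modulo $e_{J,i}$, hence only modulo $e_{J,r(J)}$ because the divisibility chain forces $e_{J,i} \mid e_{J,r(J)}$; thus the displayed value of $N_J(q)$ is $q^{\ell - r(J)}$ times a function periodic with period dividing $e_{J,r(J)} \mid \rho_C$. The kernel conditions $q \mid b'_i$ for $i > r(J)$ are exactly where the hypothesis ``for sufficiently large $q$'' enters: each $b'_i$ is a fixed integer, so for large $q$ such a condition is identically true (if $b'_i = 0$) or identically false (if $b'_i \ne 0$, forcing $N_J \equiv 0$). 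Consequently each $N_J(q)$ agrees, for large $q$, with a quasi-polynomial of degree $\ell - r(J)$ and period dividing $\rho_C$, and the alternating sum inherits period dividing $\rho_C$. Finally, since every column of $C$ is nonzero, every nonempty $J$ has $r(J) \ge 1$, so the only term of degree $\ell$ is $N_\emptyset(q) = q^\ell$; this shows $\chi^{\quasi}_\A(q) \coloneqq |\M(\A_q)|$ is monic of degree $\ell$.

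The step I expect to be the main obstacle is the exact solution count in the second paragraph together with the precise period bound in the third: one must carefully separate the two kinds of consistency conditions (the periodic ones $\gcd(e_{J,i},q) \mid b'_i$ coming from the invertible part of $D$, and the non-periodic ones $q \mid b'_i$ coming from its kernel), confirm that the former contribute genuinely periodic constituents while the latter are precisely what the ``sufficiently large $q$'' clause removes, and verify that the surviving periodic parts force no period beyond $\lcm\{e_{J,r(J)}\} = \rho_C$. By contrast, the inclusion--exclusion reduction and the monic degree-$\ell$ conclusion are routine once this bookkeeping is in place.
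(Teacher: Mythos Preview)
Your argument is correct and is precisely the standard inclusion--exclusion plus Smith normal form computation that underlies the original proofs in \cite{KTT08, KTT11}. Note, however, that the paper does not supply its own proof of this statement: Theorem~\ref{thm:KTT} is quoted as a preliminary result with a citation, so there is no ``paper's proof'' to compare against beyond the references themselves. Your sketch reconstructs that argument faithfully, including the key bookkeeping point that the kernel conditions $q \mid b'_i$ for $i > r(J)$ are the only non-periodic obstructions and are precisely what the phrase ``for sufficiently large $q$'' eliminates.
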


The name ``characteristic quasi-polynomial" is made by inspiration of the following fact.

\begin{theorem}[{e.g., \cite[Remark 3.3]{KTT11}}]
\label{thm:KTT11} 
The $1$-constituent $f^1_{\A}(t)$ of $\chi^{\quasi}_{\A}(q)$ coincides with the characteristic polynomial $\chi_\A( t)$ of $\A$: 	
$$f^1_{\A}(t)=\chi_{\A}(t).$$
\end{theorem}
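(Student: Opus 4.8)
The plan is to prove the stronger statement that $|\M(\A_q)| = \chi_{\A}(q)$ for every sufficiently large $q$ with $q \equiv 1 \pmod{\rho_C}$, and then to conclude by the elementary fact that two polynomials agreeing at infinitely many integers are identical. Indeed, by Theorem~\ref{thm:KTT} we have $|\M(\A_q)| = \chi^{\quasi}_{\A}(q)$ for all large $q$, and for $q \equiv 1 \pmod{\rho_C}$ the right-hand side equals the $1$-constituent $f^1_{\A}(q)$; hence the asserted pointwise equality forces $f^1_{\A}(t) = \chi_{\A}(t)$.

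The first step is to expand the complement count by inclusion--exclusion over subsets of the hyperplanes,
$$|\M(\A_q)| = \sum_{J \subseteq [n]} (-1)^{|J|}\, N_J(q), \qquad N_J(q) \coloneqq |\{ z \in \Z_q^{\ell} \mid z\,\overline{c_j} = \overline{b_j} \text{ for all } j \in J\}|,$$
with $N_{\varnothing}(q) = q^{\ell}$. The core computation is to evaluate $N_J(q)$ through the Smith normal form $U C_J V = D$ with $U \in GL_{\ell}(\Z)$, $V \in GL_{|J|}(\Z)$ and $D$ diagonal carrying the elementary divisors $e_{J,1} \mid \cdots \mid e_{J,r(J)}$. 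The substitution $w = zU^{-1}$ (a bijection of $\Z_q^{\ell}$) transforms the system into $wD = b'$ with $b' \coloneqq b_J V$, which decouples into the scalar congruences $e_{J,i}\, w_i \equiv b'_i \pmod{q}$ for $i \le r(J)$ together with the constraints $b'_i \equiv 0 \pmod{q}$ for $i > r(J)$. It follows that $N_J(q) = q^{\ell - r(J)} \prod_{i=1}^{r(J)} \gcd(e_{J,i}, q)$ when the system is consistent over $\Z_q$, and $N_J(q) = 0$ otherwise.

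Next I would specialize to $q \equiv 1 \pmod{\rho_C}$ with $q$ large. Since $q \equiv 1 \pmod{\rho_C}$ gives $\gcd(q, \rho_C) = 1$, and each $e_{J,i}$ divides $e_{J,r(J)} \mid \rho_C$, every factor $\gcd(e_{J,i}, q) = 1$; thus $N_J(q) = q^{\ell - r(J)}$ for consistent $J$. With the elementary-divisor factors now trivial, the only surviving consistency constraints are $b'_i \equiv 0 \pmod q$ for $i > r(J)$, and once $q$ exceeds all the finitely many integers $|b'_i|$ arising across the finitely many $J$, these hold over $\Z_q$ precisely when they hold over $\Q$, i.e.\ precisely when $\bigcap_{j \in J} H_j \ne \varnothing$, in which case $\dim \bigcap_{j \in J} H_j = \ell - r(J)$. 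Substituting back gives
$$|\M(\A_q)| = \sum_{\substack{J \subseteq [n]\\ \bigcap_{j \in J} H_j \ne \varnothing}} (-1)^{|J|} q^{\ell - r(J)},$$
which is exactly the classical Whitney formula for $\chi_{\A}(q)$; hence $|\M(\A_q)| = \chi_{\A}(q)$ as required.

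I expect the main obstacle to be the consistency comparison in the third step: one must verify that replacing $\Z_q$ by $\Q$ changes neither the set of solvable subsystems nor their affine dimensions. This is exactly the point where both hypotheses are needed simultaneously, namely $q \equiv 1 \pmod{\rho_C}$ to trivialize the $\gcd$ factors and keep each congruence uniquely solvable, and $q$ sufficiently large to exclude an accidental divisibility $q \mid b'_i$ with $b'_i \ne 0$. The remaining ingredients---the Smith normal form count and the Whitney expansion of the characteristic polynomial---are routine.
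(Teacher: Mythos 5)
Your proposal is correct, and it is essentially the argument behind the cited result: the paper itself gives no proof of this statement (it is quoted from Kamiya--Takemura--Terao), and their original treatment proceeds exactly as you do, via inclusion--exclusion over subsets $J$, the Smith normal form count $N_J(q)=q^{\ell-r(J)}\prod_{i=1}^{r(J)}\gcd(e_{J,i},q)$, the observation that $q\equiv 1 \pmod{\rho_C}$ makes every $\gcd$ factor trivial while $q$ large matches $\Z_q$-consistency with consistency over $\Q$, and finally Whitney's formula. The two hypotheses you flag (coprimality to $\rho_C$ and $q$ exceeding all $|b'_i|$) are indeed exactly where the argument could fail, and you handle both correctly.
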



\section{Type $B$ Shi descendants}
\label{sec:F-SS-BSI}
\quad
We first fix some notation throughout this section. 
By $ G = (V_{G}, E_{G}) $ we mean a digraph  with vertex set $ V_{G} = [\ell]$ and edge set $E_G \subseteq \{(i,j) \mid 1 \leq i < j \leq \ell\}$. 
A directed edge $(i,j) \in E_{G}$ is considered to be \tbf{directed from $i$ to $j$}.
A  \tbf{vertex-weighted digraph} is a pair  $(G,\psi)$ where $G$ is a digraph on $ [\ell]$ and a map $ \psi \colon [\ell] \to 2^{\mathbb{Z}} $, called a \tbf{weight} on $G$. 
A weight   $ \psi$ is called an \tbf{interval weight} if each $\psi(i)$  is an integral interval, i.e., $ \psi(i) = [a_{i}, b_{i}]\subseteq \mathbb{Z}$ where $a_{i}\le b_{i}$ are integers for every $ i \in [\ell] $.

\begin{definition}[{\cite[Definition 2.1]{ATT21}}]
\label{def:A-psi-digraphic}
Let $(G,\psi)$ be a vertex-weighted digraph.
The \tbf{(type $A$) $ \psi $-digraphic arrangement} $ \A(G,\psi) $ in $ \mathbb{R}^{\ell} $ is defined by 
$$
\A(G,\psi)  \coloneqq 
\Cox(A_\ell) 
\cup \{ x_{i}-x_{j}=1  \mid (i,j) \in E_{G}\}
\cup \{  x_{i}=\psi(i) \mid 1 \leq i   \leq \ell\}.
$$
\end{definition}

We will be interested in following digraphs.
\begin{definition}
\label{def:digraphs}
The \tbf{transitive tournament} $  T_{[\ell]}$ and \tbf{edgeless digraph} $ \overline{K}_{[\ell]}$ on $ [\ell]$ are defined by 
\begin{align*}
E_{T_{[\ell]}} & \coloneqq  \{(i,j) \mid 1 \leq i < j \leq \ell\},\\
E_{\overline{K}_{[\ell]}} & \coloneqq  \varnothing.
\end{align*}
For simplicity we often use the notation $T_{\ell}$ and $ \overline{K}_{\ell}$ for $  T_{[\ell]} $ and $ \overline{K}_{[\ell]}$, respectively. 

For $1 \leq   k \leq \ell$, define the digraph  $ T_{\ell}^{k}$  on $ [\ell] $ by 
\begin{align*} 
E_{T_{\ell}^{k}} &  \coloneqq \{(i,j) \mid 1 \leq i < j \leq \ell - k + 1\}.
\end{align*}
\end{definition}

\subsection{Type $B$  $ \psi $-digraphic arrangements}
\label{subsec:BDA}
\quad
In this subsection we introduce an analog of $ \psi $-digraphic arrangements for a type $B$ root system. 
Let $(G,\psi)$ be a vertex-weighted digraph on $ [\ell] $. 
\begin{definition}
The  \textbf{type $B$  $ \psi $-digraphic arrangement} $ \B(G,\psi) $ is the arrangement  in $ \mathbb{R}^{\ell} $
consisting of the following hyperplanes:
\begin{align*}
x_{i}\pm x_{j} &= 0\quad  (1 \leq i < j \leq \ell), \\
x_{i}\pm x_{j} &= 1\quad  ((i,j) \in E_{G}, i<j), \\
x_{i}\pm x_{j} &= -1\quad  ((j,i) \in E_{G}, i<j), \\
x_i&=  \psi(i)\quad (1 \le  i \leq \ell).
\end{align*}
\end{definition}

In comparison with the type $A$ case (Definition \ref{def:A-psi-digraphic}), in type $B$ we associate to each directed edge two hyperplanes instead of one and that $\A(G,\psi) \subsetneq \B(G,\psi)$.

\begin{example}
\label{ex:BSI-pda}
The type $B$ Shi descendants (Definition \ref{def:B-SI}) are type $B$ $ \psi $-digraphic arrangements:
 $$\B^{p,k}_\ell(m)= \mathcal{B}(T_{\ell}^{k}, \psi_{\ell}^{p,k}),$$
where $ T_{\ell}^{k}$ is the digraph defined in Definition \ref{def:digraphs} and the map $ \psi_{\ell}^{p,k} \colon [\ell] \to 2^{\mathbb{Z}} $ is given by 
$$
 \psi_{\ell}^{p,k}(i)   \coloneqq \begin{cases}
 [2-m-\min\{\ell-i+1, k\},\,\min\{\ell-i+1, k\}+m-1] & (1 \leq   i \leq p), \\
 [1-m-\min\{\ell-i+1, k\},\,\min\{\ell-i+1, k\}+m-1] & (p<  i \leq \ell). 
\end{cases}
$$
\end{example}

\begin{example}
\label{ex:BN-Ish}
The type $B$ $N$-Ish arrangement from Definition \ref{def:N-Ish-B} is a type $B$ $ \psi $-digraphic arrangement:
$\B(N) = \B(G,\psi)$ where $G =  \overline{K}_{\ell}$ is the edgeless digraph on $ [\ell]$ and $\psi(i) =N_i$ for each $i$.
\end{example}

In the type $B$ case, we need the following variant of \tbf{simplicial vertex} in a vertex-weighted digraph from \cite[Definition 3.10]{ATT21}. 

\begin{definition}
\label{def:B-simplicial}
Let $v$ be a vertex in $G$ and let $\B(v) \subseteq \mathcal{B}(G,\psi)$ be the subarrangement of $\mathcal{B}(G,\psi)$ consisting of the following hyperplanes:
\begin{align*}
x_{i}\pm x_{j} &= 0 \qquad (i,j\in [\ell]\setminus\{v\}), \\
x_{i}\pm x_{j} &= 1\qquad ((i,j) \in E_{G},i<j,  i,j\in [\ell]\setminus\{v\}), \\
x_{i}\pm x_{j} &= -1 \qquad ((j,i) \in E_{G},i<j,  i,j\in [\ell]\setminus\{v\}), \\
 x_{i} &= \psi(i) \qquad (c \in \psi(i), i\in [\ell]\setminus\{v\}). 
\end{align*}
The vertex $ v $ is called \textbf{B-simplicial} in $ (G,\psi) $ if $\cc\B(v)$ is a modular coatom of $ \mathbf{c}\mathcal{B}(G,\psi) $.
\end{definition}

Let  $ G\setminus v $ denote the subgraph obtained from $G$ by removing $ v $ and the edges incident from or on $v$.  
Thus 
$$\cc\B(v)=\mathbf{c}\B\left(G\setminus v, \psi|_{[\ell]\setminus\{v\}}\right) .$$ 

\begin{proposition}
 \label{prop:isolated}
 Let  $ v $ be an isolated vertex in $(G,\psi)$, i.e., there are no edges of $G$ incident from or on $v$. 
If $0 \in \psi(i) $ for all $ i \in [\ell] $ and $ \pm\psi(v) \subseteq \psi(i) $ for all $ i\in [\ell]\setminus\{v\}$, then $ v $ is B-simplicial in $ (G,\psi) $. 
\end{proposition}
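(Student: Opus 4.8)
The plan is to verify directly that $\cc\B(v)$ meets the two defining conditions of a modular coatom (Definition \ref{def:modco}) of $\cc\B(G,\psi)$. The first step is to pin down exactly which hyperplanes lie in $\cc\B(G,\psi)\setminus\cc\B(v)$. Since $v$ is isolated, no hyperplane of the form $x_v\pm x_j=\pm1$ occurs, so the only hyperplanes of $\B(G,\psi)$ involving the coordinate $x_v$ are $x_v\pm x_j=0$ $(j\neq v)$ and $x_v=c$ $(c\in\psi(v))$; every remaining hyperplane uses only coordinates in $[\ell]\setminus\{v\}$ and hence belongs to $\B(v)$. Passing to cones, $\cc\B(G,\psi)\setminus\cc\B(v)$ consists precisely of the homogenized hyperplanes $x_v\pm x_j=0$ $(j\neq v)$ together with $x_v=cz$ $(c\in\psi(v))$.

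For condition (1) I would use the centeredness hypothesis $0\in\psi(i)$: it guarantees that $x_i=0$ lies in $\cc\B(G,\psi)$ for every $i\in[\ell]$, so together with $H_\infty=\{z=0\}$ these $\ell+1$ coordinate hyperplanes force $\cc\B(G,\psi)$ to be essential of rank $\ell+1$. On the other hand no hyperplane of $\cc\B(v)$ constrains $x_v$, so the common intersection of $\cc\B(v)$ is the $x_v$-axis; thus $r(\cc\B(v))=\ell=r(\cc\B(G,\psi))-1$, which is condition (1).

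The substance of the argument is condition (2): for each pair of distinct $H,H'$ in $\cc\B(G,\psi)\setminus\cc\B(v)$, I must exhibit $H''\in\cc\B(v)$ with $H\cap H'\subseteq H''$. I would split into three cases. If both are Coxeter hyperplanes $x_v=s x_j$ and $x_v=s' x_{j'}$ with $s,s'\in\{\pm1\}$, then on $H\cap H'$ one has $s x_j=s' x_{j'}$, so $H\cap H'\subseteq\{x_j=s s' x_{j'}\}$, a type $B$ Coxeter hyperplane of $\cc\B(v)$ (degenerating to $x_j=0\in\cc\B(v)$ when $j=j'$). If $H$ is the coordinate hyperplane $x_v=cz$ and $H'$ the Coxeter hyperplane $x_v=s x_j$, then $H\cap H'\subseteq\{x_j=scz\}$; here I invoke $\pm\psi(v)\subseteq\psi(j)$, which gives both $c,-c\in\psi(j)$ and hence $sc\in\psi(j)$, so $x_j=scz$ indeed lies in $\cc\B(v)$. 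Finally, if both are coordinate hyperplanes $x_v=cz$ and $x_v=c'z$ with $c\neq c'$, then $(c-c')z=0$ forces $H\cap H'\subseteq\{z=0\}=H_\infty\in\cc\B(v)$. These cases exhaust all pairs and yield condition (2).

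I expect the only delicate point to be the mixed case, where the containment $H\cap H'\subseteq\{x_j=scz\}$ requires \emph{both} $c$ and $-c$ to be admissible weights at $j$; this is exactly the content of the signed inclusion $\pm\psi(v)\subseteq\psi(i)$, and it clarifies why the weaker one-sided condition $\psi(v)\subseteq\psi(i)$ would not close the case. Everything else is elementary linear algebra on codimension-two flats, so beyond careful bookkeeping of the signs $s,s'$ no serious obstacle remains.
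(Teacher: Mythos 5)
Your proof is correct and follows essentially the same route as the paper's: a direct verification of both conditions of Definition \ref{def:modco}, with condition (2) handled by a case analysis on pairs of hyperplanes involving $x_v$, using $0\in\psi(i)$ for the Coxeter--Coxeter case and $\pm\psi(v)\subseteq\psi(j)$ for the mixed case. You are in fact more complete than the paper, which declares condition (1) ``clear'' and treats only two representative cases of condition (2); your rank computation, the coordinate--coordinate case resolving into $H_\infty$, and the sign bookkeeping via $s,s'$ are all accurate.
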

\begin{proof}
We need to check Conditions \ref{def:modco}(1) and (2). 
Condition (1) is clear. 
We show Condition (2) for some cases, the remaining ones are treated similarly. 
If $H=\{x_i +x_v =0\}$ and $H'=\{x_i -x_v =0\}$ for $i\ne v$, then we can choose $H'' = \{x_i =0\} \in \cc\B(v)$ (since  $0 \in \psi(i) $) and $ H\cap H'\subseteq H''$.  
If $H=\{x_i +x_v =0\}$ and $H'=\{x_v =a\}$ for $i\ne v$ and $a \in \psi(v)$, then we can choose $H'' = \{x_i =-a\} \in \cc\B(v)$ (since  $-\psi(v) \subseteq \psi(i)  $) and $ H\cap H'\subseteq H''$.
\end{proof}

The modular coatom technique \ref{prop:modular coatom} can be translated into digraphical terms as follows.
\begin{proposition}
\label{prop:B-simplicial}
Let $ v $ be a B-simplicial vertex in $ (G,\psi) $. 
The following statements hold. 
\begin{enumerate}[(1)]
\item[(i)]   $ \mathbf{c}\B(G,\psi) $ is supersolvable (resp., free) if and only if $\cc\B(v)$ is supersolvable (resp.,  free). 
In this case, 
$$\exp ( \mathbf{c}\B(G,\psi)  )   = \exp  (\cc\B(v)  )  \cup \{|\psi(v)|+2e+2\ell-2\},$$ where  $ e $ denotes the number of edges incident from or on $ v $.
\item[(ii)]  If $\cc\B(v)$ is  flag-accurate whose exponents do not exceed  $|\psi(v)|+2e+2\ell-2$, then $ \mathbf{c}\B(G,\psi) $ is  flag-accurate.
\end{enumerate}
\end{proposition}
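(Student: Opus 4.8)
The plan is to reduce Proposition~\ref{prop:B-simplicial} directly to the modular coatom technique (Proposition~\ref{prop:modular coatom}), since by the very Definition~\ref{def:B-simplicial} of a B-simplicial vertex, $\cc\B(v)$ is a modular coatom of $\cc\B(G,\psi)$. Once this is recognized, statements (i) and (ii) become instances of parts (1) and (2) of Proposition~\ref{prop:modular coatom} applied to the cone $\A = \cc\B(G,\psi)$ with modular coatom $\B = \cc\B(v)$. The only genuine work is therefore the bookkeeping: identifying $|\A\setminus\B|$ correctly so that the exponent formula and the numerical hypothesis in (ii) match the claimed value $|\psi(v)|+2e+2\ell-2$.

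First I would spell out which hyperplanes of $\cc\B(G,\psi)$ fail to lie in $\cc\B(v)$; these are exactly the homogenized hyperplanes of $\B(G,\psi)$ that involve the variable $x_v$ in an essential way, together with none from $H_\infty$ (which lies in both cones). Concretely, $\A\setminus\B$ consists of the hyperplanes $x_i\pm x_v=0$ for $i\in[\ell]\setminus\{v\}$, the edge hyperplanes $x_i\pm x_v=\pm z$ attached to each edge of $G$ incident to $v$, and the coordinate hyperplanes $x_v=cz$ for $c\in\psi(v)$. Counting these gives $|\A\setminus\B| = 2(\ell-1) + 2e + |\psi(v)|$, where $e$ is the number of edges incident from or on $v$ (each contributing two hyperplanes in the type $B$ setup, as noted after Definition~\ref{def:A-psi-digraphic}). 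This matches $|\psi(v)|+2e+2\ell-2$ exactly.

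With this count established, part (i) follows immediately: Proposition~\ref{prop:modular coatom}(1) gives that $\cc\B(G,\psi)$ is supersolvable (resp.\ free) if and only if $\cc\B(v)$ is, and that the exponents satisfy $\exp(\A^{\mathrm{ess}}) = \exp(\B^{\mathrm{ess}})\cup\{|\A\setminus\B|\}$. I would phrase the exponent conclusion in terms of the full (non-essentialized) cones if that is the paper's convention, but since the extra factor $z$ and the hyperplane at infinity are shared, the essentialization does not affect the added exponent. Substituting the count yields $\exp(\cc\B(G,\psi)) = \exp(\cc\B(v))\cup\{|\psi(v)|+2e+2\ell-2\}$. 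Part (ii) is likewise direct from Proposition~\ref{prop:modular coatom}(2): if $\cc\B(v)$ is flag-accurate with all exponents not exceeding $|\A\setminus\B| = |\psi(v)|+2e+2\ell-2$, then $\A=\cc\B(G,\psi)$ is flag-accurate.

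I do not anticipate a serious obstacle here, since the heavy lifting has been front-loaded into the definition of B-simplicial (which builds in the modular coatom property) and into Proposition~\ref{prop:modular coatom}. The one point requiring care is the edge-counting in type $B$: because each directed edge incident to $v$ contributes \emph{two} hyperplanes (one of the form $x_i+x_v=\pm z$ and one of the form $x_i-x_v=\pm z$), it is easy to be off by a factor of two in the $2e$ term. I would double-check this against the explicit hyperplane list in the definition of $\B(G,\psi)$ to confirm the coefficient, and verify that no edge hyperplane is accidentally double-counted or already present in $\cc\B(v)$.
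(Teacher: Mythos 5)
Your proposal is correct and is exactly the paper's intended argument: the paper states this proposition as a direct translation of the modular coatom technique (Proposition~\ref{prop:modular coatom}), since Definition~\ref{def:B-simplicial} makes $\cc\B(v)$ a modular coatom of $\cc\B(G,\psi)$ by fiat, and offers no further proof. Your count $|\A\setminus\B| = 2(\ell-1) + 2e + |\psi(v)|$ (two Coxeter-type hyperplanes per other vertex, two per incident edge, one per weight element, with $z=0$ shared by both cones) is the only bookkeeping needed and is correct.
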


\begin{definition}
A vertex $ v $ in a digraph $ G $ is called a \textbf{king} (resp., \textbf{coking}) if $ (v,u) \in E_{G} $ (resp., $ (u,v) \in E_{G} $) for every $ u \in V_{G}\setminus\{v\} $. 
\end{definition}

 \tbf{King} and \tbf{coking elimination} operations for type $A$ were defined in \cite[Definition 2.14]{ATT21}.
Now we introduce an analog for type $B$. 

\begin{definition}
\label{def:BCE-BKE}
Let $(G,\psi)$ be a vertex-weighted digraph with nonempty interval weight,  i.e., $ \psi(i) = [a_{i}, b_{i}]\ne \varnothing$ for every $ i \in [\ell] $. Let  $v$ be a vertex in $G$.
\begin{enumerate}[(1)]
\item Suppose that $v$ is a coking in $ G $. 
The \textbf{type $B$ coking elimination (BCE)} on $G$ w.r.t. $v$ is a construction of a new vertex-weighted digraph $ (G^{\prime}, \psi^{\prime}) $ where $ G^{\prime}  =( [\ell] , E_{G'})$ is a digraph   and $ \psi^{\prime} $ is a weight given by
\begin{align*}
E_{G^{\prime}}  \coloneqq E_{G}\setminus\Set{(i,v) | i \in [\ell]\setminus\{v\}}, \qquad
\psi^{\prime}(i)  \coloneqq \begin{cases}
[a_{i}-1, b_{i}+1] & (i \in [\ell]\setminus\{v\}), \\
[a_{v}, b_{v}] & (i=v). 
\end{cases}
\end{align*}
\item Dually, suppose that  $v$ is a king in $ G $. 
The \textbf{type $B$ king elimination (BKE)} on $G$ w.r.t. $v$  produces a new vertex-weighted digraph $ (G'', \psi'') $ given by
\begin{align*}
E_{G''}  \coloneqq E_{G}\setminus\Set{(v,i) | i \in [\ell]\setminus\{v\}}, \qquad
\psi''(i)  \coloneqq \begin{cases}
[a_{i}-1, b_{i}+1] & (i \in [\ell]\setminus\{v\}), \\
[a_{v}, b_{v}] & (i=v). 
\end{cases}
\end{align*}
\end{enumerate}
\end{definition}
Let  $ G^{\mathrm{conv}} $ denote the \textbf{converse} of $G$, namely, $ G^{\mathrm{conv}} $ is the digraph obtained by reversing the direction on each edge of $ G $. 
Similar to the type $A$ case, we have $ \mathcal{B}(G,\psi) =  \mathcal{B}(G^{\mathrm{conv}}, -\psi) $ via $ x_{i} \mapsto -x_i $.
Hence taking BCE w.r.t. a coking $v$ in $(G,\psi)$ is equivalent to  taking BKE w.r.t. the king $v$ in $(G^{\mathrm{conv}}, -\psi)$.
Also, the BCE (resp., BKE) w.r.t. $v$ induces a set bijection between the associated arrangements $\ \mathcal{B}(G,\psi) \longrightarrow  \mathcal{B}(G^{\prime}, \psi^{\prime}) $ (resp., $  \mathcal{B}(G,\psi) \longrightarrow  \mathcal{B}(G'', \psi'') $).

The following analog of \cite[Proposition 7.16]{MRT23} is the main result of this subsection.
\begin{theorem}
 \label{thm:BSI-seq}
Let  $0 \le p \le \ell$.
The sequence  
$$ \B^{p,1}_\ell(m) \longrightarrow \B^{p,2}_\ell(m)\longrightarrow \cdots\longrightarrow \B^{p,\ell}_\ell(m)$$
consisting of the Shi descendants in the $p$-th row of the Shi descendant matrix and bijections between them can be constructed by BCE and the operation of adding isolated vertices on the underlying vertex-weighted digraphs. 

Moreover, for fixed $1 \le k \le \ell$ each vertex $n \in [\ell-k+2, \ell] $ is isolated and B-simplicial in  $ (T_{\ell}^{k}[n], \psi_{\ell}^{p,k}|_{[n]})$, the induced subgraph of $(T_{\ell}^{k}, \psi_{\ell}^{p,k})$ (Example \ref{ex:BSI-pda}) by $[n]=\{1,2,\ldots,n\}$.
 \end{theorem}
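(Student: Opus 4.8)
The plan is to prove the two assertions separately, relying on the explicit interval description of $\psi_\ell^{p,k}$ from Example \ref{ex:BSI-pda}; throughout I abbreviate $\mu_i \coloneqq \min\{\ell-i+1,k\}$. The structural observation underlying the first assertion is that $T_\ell^k$ decomposes into a transitive tournament on the core $\{1,\ldots,\ell-k+1\}$ together with the isolated vertices $\{\ell-k+2,\ldots,\ell\}$, and that the top core vertex $\ell-k+1$ is a coking of that tournament (it receives every core edge and emits none). The second assertion will follow from Proposition \ref{prop:isolated} once two weight containments are checked.

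To construct the sequence, I would pass from step $k$ to step $k+1$ as follows. First delete the isolated vertices $\{\ell-k+2,\ldots,\ell\}$ to expose the weighted tournament on the core; then apply the BCE of Definition \ref{def:BCE-BKE} with respect to the coking $v=\ell-k+1$; finally reinstate the deleted vertices, which is the ``adding isolated vertices'' step. By definition BCE removes exactly the incoming edges $\{(i,\ell-k+1):1\le i<\ell-k+1\}$, leaving the tournament on $\{1,\ldots,\ell-k\}$ with $\ell-k+1$ now isolated; together with the reinstated vertices this is precisely the edge set of $T_\ell^{k+1}$. It then remains to match weights. For a non-coking core vertex $i\le\ell-k$ one has $\mu_i=k$, and the BCE update, which lowers the left endpoint by one and raises the right endpoint by one, replaces $k$ by $k+1$ throughout the endpoints; this is exactly $\psi_\ell^{p,k+1}(i)$, for which $\mu_i=k+1$. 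The coking weight is left unchanged, in agreement with $\mu_{\ell-k+1}=k$ in both $\psi_\ell^{p,k}$ and $\psi_\ell^{p,k+1}$, while the reinstated vertices $i\ge\ell-k+2$ keep $\mu_i=\ell-i+1$ and so retain their weights. Since each BCE induces the set bijection between associated arrangements recorded after Definition \ref{def:BCE-BKE}, composing these bijections produces the required sequence.

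For the second assertion, fix $k$ and $n\in[\ell-k+2,\ell]$. Isolatedness of $n$ in $T_\ell^k[n]$ is immediate because $n>\ell-k+1$, so no edge of $T_\ell^k$ touches $n$. I then apply Proposition \ref{prop:isolated} to $(T_\ell^k[n],\psi_\ell^{p,k}|_{[n]})$, which requires (a) $0\in\psi_\ell^{p,k}(i)$ for every $i\in[n]$ and (b) $\pm\psi_\ell^{p,k}(n)\subseteq\psi_\ell^{p,k}(i)$ for every $i\in[n-1]$. Condition (a) is forced by $m\ge1$ and $\mu_i\ge1$, which place $0$ inside each interval. For (b) I would first record the clean fact that the symmetric closure $\pm\psi_\ell^{p,k}(n)$ equals the symmetric interval $[1-m-\mu_n,\,\mu_n+m-1]$ in both the starred and unstarred cases, and then the arithmetical heart of the matter: the strict inequality $\mu_i\ge\mu_n+1$ holds for every $i<n$. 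Indeed $\mu_n=\ell-n+1\le k-1$, so if $i\le\ell-k+1$ then $\mu_i=k\ge\mu_n+1$, whereas if $\ell-k+2\le i<n$ then $\mu_i=\ell-i+1>\ell-n+1=\mu_n$. With these two facts, endpoint comparison gives $[1-m-\mu_n,\mu_n+m-1]\subseteq\psi_\ell^{p,k}(i)$ in each of the starred ($i\le p$) and unstarred ($i>p$) regimes, establishing (b).

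The obstacle I expect is bookkeeping rather than a conceptual hurdle: one must keep track of the regimes $i\le p$ versus $i>p$ and $\mu_i=k$ versus $\mu_i=\ell-i+1$, and confirm that the rightward asymmetry of the starred intervals does not obstruct condition (b). Isolating the inequality $\mu_i\ge\mu_n+1$ and the identity $\pm\psi_\ell^{p,k}(n)=[1-m-\mu_n,\mu_n+m-1]$ first is what reduces every remaining comparison to a one-line endpoint check.
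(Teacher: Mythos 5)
Your proposal is correct and follows essentially the same route as the paper: the same decomposition of $T_\ell^k$ into the core tournament on $[\ell-k+1]$ plus isolated vertices, BCE applied to the coking $v=\ell-k+1$ of the induced subgraph followed by reinstating the isolated vertices, and B-simpliciality via Proposition \ref{prop:isolated}. Your only deviation is organizational --- packaging the paper's case-by-case weight checks into the identity $\pm\psi_\ell^{p,k}(n)=[1-m-\mu_n,\,\mu_n+m-1]$ and the inequality $\mu_i\ge\mu_n+1$ for $i<n$ --- which streamlines the endpoint verifications (correctly noting that the strict inequality is what absorbs the asymmetry of the intervals for $i\le p$) but is the same argument in substance.
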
 
 \begin{proof}
 The proof is similar in spirit to the proofs of \cite[Propositions 7.12 and 7.16]{MRT23}. 
Fix $1 \le k \le \ell-1$. 
In view of Example \ref{ex:BSI-pda}, we shall show that there exists a bijection $ \B^{p,k}_\ell(m) \longrightarrow \B^{p,k+1}_\ell(m)$ that can be constructed by taking BCE and adding isolated vertices on the underlying digraphs. 

Observe that each vertex $ i \in [\ell-k+2, \ell] $ is isolated, and its weight is the same in both $ (T_{\ell}^{k}, \psi_{\ell}^{p,k}) $ and $(T_{\ell}^{k+1}, \psi_{\ell}^{p,k+1})$.

Set $ v  \coloneqq  \ell-k+1$. 
Then $v$ is a coking of  the induced subgraph $ T_{\ell}^{k}[v] $. 
Now we show that after applying the BCE to $ (T_{\ell}^{k}[v], \psi_{\ell}^{p, k}|_{[v]}) $ w.r.t. $ v $ 
we obtain $ (T_{\ell}^{k+1}[v], \psi_{\ell}^{p,k+1}|_{[v]}) .$
Clearly, $ (T_{\ell}^{k}[v])^{\prime} = T_{\ell}^{k+1}[v] $. 
We need to show
\begin{equation*}
\label{eq:coking-v}
  (\psi_{\ell}^{p,k}|_{[v]})^{\prime}  =  \psi_{\ell}^{p,k+1}|_{[v]} .
\end{equation*}

There are two cases: $1 \leq   v \leq p$ and $p<   v \leq \ell$. 
Since the proofs are similar, we consider only the latter (which is a bit harder).
Suppose $p<   v \leq \ell$. 
Then by Definition \ref{def:BCE-BKE}(1), 
\begin{align*}
( \psi_{\ell}^{p,k}|_{[v]})^{\prime}(v) &=[1-m-\min\{\ell-v+1, k\},\,\min\{\ell-v+1, k\}+m-1]  \\
&= [1-m-k,\,k+m-1]  = \psi_{\ell}^{p,k+1}|_{[v]}(v). 
\end{align*}
Moreover, for every $ 1 \le i \le p $, 
\begin{align*}
( \psi_{\ell}^{p,k}|_{[v]})^{\prime}(i) &=[1-m-\min\{\ell-i+1, k\},\,\min\{\ell-i+1, k\}+m] \\
&=[1-m-k,\,k+m]   = \psi_{\ell}^{p,k+1}|_{[v]}(i).
\end{align*}
Similarly, for every $ p< i < v $, $( \psi_{\ell}^{p,k}|_{[v]})^{\prime}(i) =  \psi_{\ell}^{p,k+1}|_{[v]}(i)=[-m-k,\,k+m] $.

By adding the isolated vertices $v+1, \dots, \ell $ with their weights to  $ (T_{\ell}^{k}[v], \psi_{\ell}^{p,k}|_{[v]})$ and $  (T_{\ell}^{k+1}[v], \psi_{\ell}^{p,k+1}|_{[v]})$, we obtain the desired bijection 
$$ \B^{p,k}_\ell(m) \longrightarrow \B^{p,k+1}_\ell(m).$$ 

Now we show that for fixed $1 \le k \le \ell$, each isolated vertex $n \in [v+1, \ell] $ is  B-simplicial in  $ (T_{\ell}^{k}[n], \psi_{\ell}^{p,k}|_{[n]})$ by using Proposition \ref{prop:isolated}. 
Again we consider two cases: $p<   v \leq \ell$ and  $1 \leq   v \leq p$. 

If the former occurs, then 
$$
 \psi_{\ell}^{p,k}(i)  =\begin{cases}
[2 - m -k, k+m-1 ] & (1 \leq   i \leq p), \\
[ 1 - m -k, k+m-1] & (p<  i \leq  v), \\
[ - m - \ell+i , \ell-i+m ] & (v <  i \leq  \ell ).
\end{cases}
$$
 
If the latter occurs, then
$$
 \psi_{\ell}^{p,k}(i) =\begin{cases}
[2 - m -k, k+m-1] & (1 \leq   i \leq  v), \\
[ 1- m - \ell+i , \ell-i+m]  & ( v<  i \leq p), \\
[ - m - \ell+i , \ell-i+m] & (p<  i \leq  \ell ).
\end{cases}
$$

In either case, one can check that $\pm  \psi_{\ell}^{p,k}(j) \subseteq \psi_{\ell}^{p,k}(i) $ for any $v< i<j \le \ell$, and $\pm  \psi_{\ell}^{p,k}(n) \subseteq \psi_{\ell}^{p,k}(i) $ for any $ 1 \le i  \le v$. 
Now Proposition \ref{prop:isolated} completes the proof.
 
\end{proof}

Figure \ref{fig:BSI-arr} below depicts the Shi descendant  matrix for $\ell=3$ and the relation in each row from Theorem \ref{thm:BSI-seq}.

 \begin{figure}[ht!]
\centering
\begin{subfigure}{.3\textwidth}
  \centering
\begin{tikzpicture}[scale=.5]
\draw (0,1.3) node[v](1){} node[above]{\tiny $\substack{ \textbf{1} \\ [-m,m]} $};
\draw (-0.75,0) node[v](2){} node[left]{\tiny$ \substack{ \textbf{2} \\ [-m,m]} $};
\draw (0.75,0) node[v](3){} node[right]{\tiny$\substack{ \textbf{3} \\ [-m,m] }$};
\draw[>=Stealth,->] (1) to (2);
\draw[>=Stealth,->] (1) to (3);
\draw[>=Stealth,->] (2) to (3);
\end{tikzpicture}
  \caption*{$ \B^{0,1}_3 (m) $}
\end{subfigure}%
\begin{subfigure}{.3\textwidth}
  \centering
\begin{tikzpicture}[scale=.5]
\draw (0,1.3) node[v](1){} node[above]{\tiny $ \substack{ \textbf{1} \\ [-m-1,m+1]} $};
\draw (-0.75,0) node[v](2){} node[left]{\tiny $\substack{ \textbf{2} \\ [-m-1,m+1]} $};
\draw (0.75,0) node[v](3){} node[right]{\tiny $\substack{ \textbf{3} \\ [-m,m]} $};
\draw[>=Stealth,->] (1) to (2);
\end{tikzpicture}
  \caption*{$ \B^{0,2}_3 (m) $}
\end{subfigure}%
\begin{subfigure}{.3\textwidth}
  \centering
\begin{tikzpicture}[scale=.5]
\draw (0,1.3) node[v](1){} node[above]{\tiny $ \substack{ \textbf{1} \\ [-m-2,m+2]} $};
\draw (-0.75,0) node[v](2){} node[left]{\tiny $\substack{ \textbf{2} \\ [-m-1,m+1]} $};
\draw (0.75,0) node[v](3){} node[right]{\tiny $\substack{ \textbf{3} \\ [-m,m]} $};
\end{tikzpicture}
  \caption*{$ \B^{0,3}_3 (m) $}
\end{subfigure}%

\bigskip
\begin{subfigure}{.3\textwidth}
  \centering
\begin{tikzpicture}[scale=.5]
\draw (0,1.3) node[v](1){} node[above]{\tiny $\substack{ \textbf{1} \\ [1-m,m]} $};
\draw (-0.75,0) node[v](2){} node[left]{\tiny$ \substack{ \textbf{2} \\ [-m,m]} $};
\draw (0.75,0) node[v](3){} node[right]{\tiny$\substack{ \textbf{3} \\ [-m,m] }$};
\draw[>=Stealth,->] (1) to (2);
\draw[>=Stealth,->] (1) to (3);
\draw[>=Stealth,->] (2) to (3);
\end{tikzpicture}
  \caption*{$ \B^{1,1}_3 (m) $}
\end{subfigure}%
\begin{subfigure}{.3\textwidth}
  \centering
\begin{tikzpicture}[scale=.5]
\draw (0,1.3) node[v](1){} node[above]{\tiny $ \substack{ \textbf{1} \\ [-m,m+1]} $};
\draw (-0.75,0) node[v](2){} node[left]{\tiny $\substack{ \textbf{2} \\ [-m-1,m+1]} $};
\draw (0.75,0) node[v](3){} node[right]{\tiny $\substack{ \textbf{3} \\ [-m,m]} $};
\draw[>=Stealth,->] (1) to (2);
\end{tikzpicture}
  \caption*{$ \B^{1,2}_3 (m) $}
\end{subfigure}%
\begin{subfigure}{.3\textwidth}
  \centering
\begin{tikzpicture}[scale=.5]
\draw (0,1.3) node[v](1){} node[above]{\tiny $ \substack{ \textbf{1} \\ [-m-1,m+2]} $};
\draw (-0.75,0) node[v](2){} node[left]{\tiny $\substack{ \textbf{2} \\ [-m-1,m+1]} $};
\draw (0.75,0) node[v](3){} node[right]{\tiny $\substack{ \textbf{3} \\ [-m,m]} $};
\end{tikzpicture}
  \caption*{$ \B^{1,3}_3 (m) $}
\end{subfigure}%

\bigskip
\begin{subfigure}{.3\textwidth}
  \centering
\begin{tikzpicture}[scale=.5]
\draw (0,1.3) node[v](1){} node[above]{\tiny $\substack{ \textbf{1} \\ [1-m,m]} $};
\draw (-0.75,0) node[v](2){} node[left]{\tiny$ \substack{ \textbf{2} \\ [1-m,m]} $};
\draw (0.75,0) node[v](3){} node[right]{\tiny$\substack{ \textbf{3} \\ [-m,m] }$};
\draw[>=Stealth,->] (1) to (2);
\draw[>=Stealth,->] (1) to (3);
\draw[>=Stealth,->] (2) to (3);
\end{tikzpicture}
  \caption*{$ \B^{2,1}_3 (m) $}
\end{subfigure}%
\begin{subfigure}{.3\textwidth}
  \centering
\begin{tikzpicture}[scale=.5]
\draw (0,1.3) node[v](1){} node[above]{\tiny $ \substack{ \textbf{1} \\ [-m,m+1]} $};
\draw (-0.75,0) node[v](2){} node[left]{\tiny $\substack{ \textbf{2} \\ [-m,m+1]} $};
\draw (0.75,0) node[v](3){} node[right]{\tiny $\substack{ \textbf{3} \\ [-m,m]} $};
\draw[>=Stealth,->] (1) to (2);
\end{tikzpicture}
  \caption*{$ \B^{2,2}_3 (m) $}
\end{subfigure}%
\begin{subfigure}{.3\textwidth}
  \centering
\begin{tikzpicture}[scale=.5]
\draw (0,1.3) node[v](1){} node[above]{\tiny $ \substack{ \textbf{1} \\ [-m-1,m+2]} $};
\draw (-0.75,0) node[v](2){} node[left]{\tiny $\substack{ \textbf{2} \\ [-m,m+1]} $};
\draw (0.75,0) node[v](3){} node[right]{\tiny $\substack{ \textbf{3} \\ [-m,m]} $};
\end{tikzpicture}
  \caption*{$ \B^{2,3}_3 (m) $}
\end{subfigure}%

\bigskip
\begin{subfigure}{.3\textwidth}
  \centering
\begin{tikzpicture}[scale=.5]
\draw (0,1.3) node[v](1){} node[above]{\tiny $\substack{ \textbf{1} \\ [1-m,m]} $};
\draw (-0.75,0) node[v](2){} node[left]{\tiny$ \substack{ \textbf{2} \\ [1-m,m]} $};
\draw (0.75,0) node[v](3){} node[right]{\tiny$\substack{ \textbf{3} \\ [1-m,m] }$};
\draw[>=Stealth,->] (1) to (2);
\draw[>=Stealth,->] (1) to (3);
\draw[>=Stealth,->] (2) to (3);
\end{tikzpicture}
  \caption*{$ \B^{3,1}_3 (m) $}
\end{subfigure}%
\begin{subfigure}{.3\textwidth}
  \centering
\begin{tikzpicture}[scale=.5]
\draw (0,1.3) node[v](1){} node[above]{\tiny $ \substack{ \textbf{1} \\ [-m,m+1]} $};
\draw (-0.75,0) node[v](2){} node[left]{\tiny $\substack{ \textbf{2} \\ [-m,m+1]} $};
\draw (0.75,0) node[v](3){} node[right]{\tiny $\substack{ \textbf{3} \\ [1-m,m]} $};
\draw[>=Stealth,->] (1) to (2);
\end{tikzpicture}
  \caption*{$ \B^{3,2}_3 (m) $}
\end{subfigure}%
\begin{subfigure}{.3\textwidth}
  \centering
\begin{tikzpicture}[scale=.5]
\draw (0,1.3) node[v](1){} node[above]{\tiny $ \substack{ \textbf{1} \\ [-m-1,m+2]} $};
\draw (-0.75,0) node[v](2){} node[left]{\tiny $\substack{ \textbf{2} \\ [-m,m+1]} $};
\draw (0.75,0) node[v](3){} node[right]{\tiny $\substack{ \textbf{3} \\ [1-m,m]} $};
\end{tikzpicture}
  \caption*{$ \B^{3,3}_3 (m) $}
\end{subfigure}%
\caption{The Shi descendant  matrix of  type $B$  for $\ell=3$. In particular, $\Shi(B_3) = \B^{3,1}_3(1)$ and $\Ish(B_3) = \B^{3,3}_3(1)$.}
\label{fig:BSI-arr}
\end{figure}
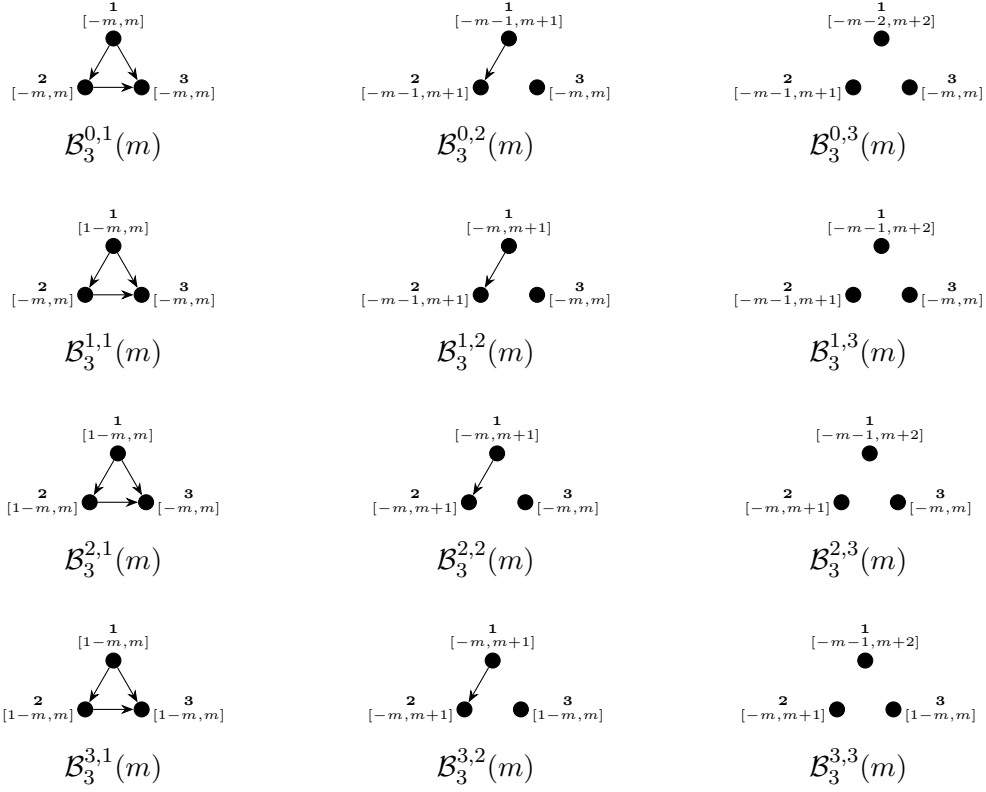

\subsection{Proof of Theorem  \ref{thm:main-BSI-free-SS}}
\label{subsec:F-SS-BSI}
\quad
First we study the flag-accuracy of the arrangements $\B^{p,1}_\ell(m)$ in the first column of the Shi descendant matrix given in \eqref{eq:1stcol}. 
Similar to the observation in Introduction for type $A$, these Shi-like arrangements contain sufficient deletions and restrictions that can guarantee not only their freeness but also their flag-accuracy. 
\begin{theorem}
 \label{thm:k=1,l,m,p}
Let $a \ge 1$, $m\ge 1$,  $\ell \ge 1$ and $0 \le p \le \ell$. 
Let $\B^p_\ell(m,a)$ be the arrangement consisting of the hyperplanes
\begin{align*}
x_{i}\pm x_{j} &= [1-a , a]\quad  (1 \leq i < j \leq \ell), \\
x_i&=  [1-m , m] \quad (1 \leq   i \leq p), \\
x_i&=[-m , m]  \quad (p<  i \leq \ell).
\end{align*}
The cone over $\B^p_\ell(m,a)$ is flag-accurate with exponents 
$$\exp(\textbf{c}\B^p_\ell(m, a)) = \{1, (2m+2a\ell-2a)^{p}, (2m+2a\ell-2a+1 )^{\ell-p}\}.$$ 

As a consequence, the Shi descendant $\B^{p,1}_\ell(m) = \B^p_\ell(m, 1) $ in the first column of the Shi descendant matrix given in \eqref{eq:1stcol} has flag-accurate cone with exponents 
$$\exp(\textbf{c}\B^{p,1}_\ell(m)) = \{1, (2m+2\ell-2)^{p}, (2m+2\ell-1)^{\ell-p}\}.$$ 
 \end{theorem}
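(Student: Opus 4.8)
Write $d \coloneqq 2m+2a\ell-2a$, so the asserted exponents read $\{1, d^{p}, (d+1)^{\ell-p}\}$ and $\cc\B^p_\ell(m,a)$ is essential of rank $\ell+1$. I would argue by induction on $\ell$. The base $\ell=1$ is immediate: there are no pair hyperplanes, so $\cc\B^p_1(m,a)$ is a central arrangement of lines through the origin in $\R^2$, hence free (and trivially flag-accurate) with exponents $\{1,N-1\}$, where $N=2m+2$ if $p=0$ and $N=2m+1$ if $p=1$; in both cases this matches $\{1,d^{p},(d+1)^{\ell-p}\}$. The engine for the inductive step is a single restriction identity. For $0\le p\le \ell-1$ let $H_p \coloneqq \{x_{p+1}=-mz\}$ be the bottom coordinate hyperplane of the first \emph{heavy} variable; I claim
$$\big(\cc\B^p_\ell(m,a)\big)^{H_p}\ \text{is linearly equivalent to}\ \cc\B^{p}_{\ell-1}(m+a,a),$$
and for $p=\ell$ (all variables light) that $\big(\cc\B^\ell_\ell(m,a)\big)^{\{x_\ell=mz\}}$ is linearly equivalent to $\cc\B^{\ell-1}_{\ell-1}(m+a,a)$. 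Both restrictions reduce $\ell$ by one, raise the radius $m\mapsto m+a$, and keep $a$ fixed, so that $d$ is \emph{preserved}; by the induction hypothesis they are free and flag-accurate with exponents $\{1,d^{p},(d+1)^{\ell-1-p}\}$, respectively $\{1,d^{\ell-1}\}$.

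To see the identity one substitutes $x_{p+1}=-m$ and tracks how the remaining coordinate weights grow. Eliminating $x_{p+1}$, each surviving variable $x_i$ inherits, in addition to its original interval, the traces of the two pair families $x_i\pm x_{p+1}$. The delicate point is that the difference weight $[1-a,a]$ is \emph{not} symmetric about $0$, so the bookkeeping depends on the sign pattern: for a light index $i\le p$ the pair is $x_i\pm x_{p+1}$ and the union $[1-m,m]\cup[1-a-m,a-m]\cup[1-a+m,a+m]=[1-(m+a),m+a]$ stays light, whereas for a heavy index $i> p+1$ the pair is $x_{p+1}\pm x_i$, and now the trace $x_{p+1}-x_i=a$ contributes the value $-m-a$, so the union $[-m,m]\cup[-m-a,a-m-1]\cup[1-a+m,a+m]=[-(m+a),m+a]$ remains \emph{symmetric}, i.e. heavy. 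This asymmetry — and the fact that one must restrict the \emph{first heavy} coordinate at its minimum (not an arbitrary coordinate at an arbitrary extreme, which would wrongly saturate everything to light) — is the main obstacle, and the one place where the type $B$ computation genuinely differs from type $A$. Granting the identity, freeness and exponents follow in either of two ways. Knowing $\chi_{\cc\B^p_\ell(m,a)}(t)=(t-1)(t-d)^p(t-d-1)^{\ell-p}$ — which I would obtain by the finite‑field method, or by the deletion–restriction recursion of Theorem~\ref{thm:DR} together with the identity above — one checks that $\chi_{(\cc\B^p_\ell(m,a))^{H_p}}(t)=(t-1)(t-d)^p(t-d-1)^{\ell-1-p}$ divides it, and Abe's Division theorem~\ref{thm:DT} yields freeness, after which Theorem~\ref{thm:Factorization} pins down the exponents. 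Alternatively, since $\cc\B^{p+1}_\ell(m,a)=\cc\B^p_\ell(m,a)\setminus H_p$, one can run a downward induction on $p$ from $p=\ell$ via Addition–Deletion~\ref{thm:AD}, using the triple $(\cc\B^p_\ell,\cc\B^{p+1}_\ell,\cc\B^p_{\ell-1}(m+a,a))$; the exponent bookkeeping closes because passing from $\cc\B^{p+1}_\ell$ to $\cc\B^p_\ell$ exactly promotes one $d$ to $d+1$, and the base case $p=\ell$ is supplied by the Division theorem as above.

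Finally, flag-accuracy (Definition~\ref{def:FA-arr}) comes essentially for free from the same restriction, because its exponents are precisely the prefix of $\exp(\cc\B^p_\ell(m,a))$ obtained by deleting the largest entry (a $d+1$ when $p<\ell$, a $d$ when $p=\ell$). Concretely, take $X_\ell\coloneqq H_p$ (resp. $\{x_\ell=mz\}$) as the top step of the flag; by induction the restriction $\cc\B^{p}_{\ell-1}(m+a,a)$ is flag-accurate, furnishing a flag $X_1\subseteq\cdots\subseteq X_{\ell-1}\subseteq X_\ell$ in $L\big((\cc\B^p_\ell(m,a))^{X_\ell}\big)\subseteq L(\cc\B^p_\ell(m,a))$ with the correct prefix exponents, and by transitivity of restriction $\big(\cc\B^p_\ell(m,a)\big)^{X_i}=\big((\cc\B^p_\ell(m,a))^{X_\ell}\big)^{X_i}$ for $i\le \ell-1$. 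Appending $X_{\ell+1}=V$ completes a flag realizing $\{1,d^p,(d+1)^{\ell-p}\}_{\le}$, so $\cc\B^p_\ell(m,a)$ is flag-accurate. Specializing $a=1$ gives the stated consequence for $\cc\B^{p,1}_\ell(m)=\cc\B^p_\ell(m,1)$. The steps I expect to require the most care are the verification of the restriction identity with its sign subtleties, and securing the characteristic polynomial needed to launch the Division theorem at the base of the induction.
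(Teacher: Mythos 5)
Your skeleton is, for the most part, exactly the paper's: you pick the same two hyperplanes ($x_{p+1}=-mz$ when $p<\ell$, and $x_\ell=mz$ when $p=\ell$), you verify the same restriction identities $\bigl(\cc\B^p_\ell(m,a)\bigr)^{H_p}=\cc\B^{p}_{\ell-1}(m+a,a)$ and $\bigl(\cc\B^\ell_\ell(m,a)\bigr)^{\{x_\ell=mz\}}=\cc\B^{\ell-1}_{\ell-1}(m+a,a)$ (your interval bookkeeping, including the asymmetry of $[1-a,a]$, is correct and agrees with the paper's displayed computation of $\A''$), your option (ii) is precisely the paper's addition-theorem induction (packaged there as lexicographic induction on $(\ell,\ell-p)$, Case 1, with $\cc\B^{p+1}_\ell(m,a)=\cc\B^p_\ell(m,a)\setminus\{H_p\}$), and your flag-accuracy recursion, including the switch to $x_\ell=mz$ when $p=\ell$, is the paper's argument verbatim. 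The genuine gap is at the single point where you deviate: the base case $p=\ell$. You anchor it with the Division Theorem \ref{thm:DT}, which presupposes $\chi_{\cc\B^\ell_\ell(m,a)}(t)=(t-1)(t-d)^{\ell}$, and neither of your two proposed sources for this polynomial is secured. The finite-field count is plausible but is a real multi-step computation for the asymmetric weights $x_i\pm x_j=[1-a,a]$ (compare Lemma \ref{lem:1stcol-PC}, which handles only $a=1$ and even then merely relates two counts); you assert it without performing it. The alternative, ``the deletion--restriction recursion of Theorem \ref{thm:DR} together with the identity above,'' cannot work at all: that recursion expresses $\chi_{\cc\B^p_\ell}$ through $\chi_{\cc\B^{p+1}_\ell}$ and $\chi_{\cc\B^p_{\ell-1}(m+a,a)}$, so it only shuttles the unknown toward $p=\ell$ and still needs an independently evaluated anchor there --- the very quantity you are trying to produce; and deleting a hyperplane of $\cc\B^\ell_\ell(m,a)$ itself, say $x_\ell=mz$, exits the family (the weight $[1-m,m-1]$ is symmetric of level $m-1$ while the other coordinates stay light of level $m$), so the recursion does not close among the arrangements you have defined. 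As written, your induction therefore has no base.

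For comparison, the paper's Case 2 avoids characteristic polynomials altogether: it sets $\D=\cc\B^{\ell-1}_\ell(m,a)=\cc\B^\ell_\ell(m,a)\cup\{K\}$ with $K\colon x_\ell=-mz$, takes freeness of $\D$ and of $\D^{K}=\cc\B^{\ell-1}_{\ell-1}(m+a,a)$, and applies the deletion part $(1)+(3)\Rightarrow(2)$ of Theorem \ref{thm:AD}, which needs no characteristic polynomial input. That said, your instinct that the base requires an \emph{independent} entry point is pertinent: the paper justifies freeness of $\D$ ``by Case 1,'' and Case 1 applied to $\D$ consumes the freeness of $\A'=\cc\B^\ell_\ell(m,a)$, i.e., exactly the $p=\ell$ statement that Case 2 is establishing, so the two cases at fixed $\ell$ lean on one another and some external anchor is needed in any event (for instance the finite-field evaluation you gesture at, or the companion family $\widehat{\B}^p_\ell(m,a)$ of Theorem \ref{thm:k=1,l,m,p-hat}, whose all-light member at level $m-1$ coincides with $\B^\ell_\ell(m,a)$ and whose deletions run in the opposite direction in $p$, permitting a descent in $m$). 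In short: restriction identity, inductive engine, and flag-accuracy are all correct and match the paper; what is missing is an actual proof of the characteristic polynomial of $\cc\B^\ell_\ell(m,a)$ --- or any other independent freeness certificate for it --- on which your entire induction bottoms out.
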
 
 
 \begin{proof}
Define the lexicographic order on the set of pairs
$$\{ (\ell, \ell-p) \mid 0\le \ell- p \le \ell, \ell \ge 1\}.$$

We first prove by induction on $ (\ell, \ell-p) $ that $\A: = \textbf{c}\B^p_\ell(m, a)$ is free with the desired exponents. 
When $\ell=1$, it is obvious. Suppose $\ell\ge2$.

Case $1$. Suppose $\ell-p\ge 1$. 
Let $H \in \A$ denote the hyperplane $x_{p+1} = -mz$. 
Then $\A' \coloneqq \A\setminus \{H\} = \textbf{c}\B^{p+1}_\ell(m,a)$ is free  with exponents 
$$\exp(\A') = \{1, (2m+2a\ell-2a)^{p+1}, (2m+2a\ell-2a+1)^{\ell-p-1}\}$$ 
by the induction hypothesis since $(\ell, \ell-p) > (\ell, \ell-p-1)$. 
Moreover, $\A'' \coloneqq \A^{H}$ consists of the following hyperplanes 
\begin{align*}
z &= 0 , \\
x_{i}\pm x_{j} &=  [1-a , a]z\quad  (1 \leq i < j \leq \ell, i\ne p+1,j\ne p+1 ), \\
x_i&=  [1-(m+a) , m+a]z \quad (1 \leq   i \leq p), \\
x_i&=[-(m+a) , m+a] z \quad (p+1<  i \leq \ell).
\end{align*}
Thus $\A'' =   \textbf{c}\B^{p}_{\ell-1}(m+a, a) $ is free with exponents 
$$\exp(\A'') = \{1, (2m+2a\ell-2a)^{p}, (2m+2a\ell-2a+1)^{\ell-p-1}\}$$ 
by the induction hypothesis since $(\ell, \ell-p) > (\ell-1, \ell-p-1)$. 
Therefore, by the addition part of Theorem \ref{thm:AD}, $\A =\textbf{c}\B^p_\ell(m, a)$ is free with the desired exponents. 

Case $2$. Suppose  $\ell=p$. 
The arrangement  in question is $\D'  \coloneqq \textbf{c}\B^\ell_\ell(m,a)$ given by
\begin{align*}
z &= 0 , \\
x_{i}\pm x_{j} &=  [1-a , a]z\quad  (1 \leq i < j \leq \ell), \\
x_i&=  [1-m , m]z \quad (1 \leq   i \leq \ell).
\end{align*}
We need to prove   that $\D'$ is free with exponents 
$$\exp(\D') = \{1, (2m+2a\ell-2a)^{\ell}\}.$$ 
Note that $\D  \coloneqq \textbf{c}\B^{\ell-1}_\ell(m,a) = \D' \cup \{K\}$, where $K  \in \D$ denotes the hyperplane $x_\ell = -mz$. 
By Case $1$, $\D$ is free with exponents 
$$\exp(\D) = \{1, (2m+2a\ell-2a)^{\ell-1}, 2m+2a\ell-2a+1\}.$$
Again by Case $1$, $\D^K= \textbf{c}\B^{\ell-1}_{\ell-1}(m+a, a)$.
Thus by the induction hypothesis, $\D^K$ is free with exponents 
$$\exp(\D^K) = \{1, (2m+2a\ell-2a)^{\ell-1}\}.$$ 
Apply the deletion part ($(1)+(3) \Rightarrow (2)$) of Theorem \ref{thm:AD}, we know that  
$\D'$ is free with the desired exponents. 

Now we prove that $\A = \textbf{c}\B^p_\ell(m, a)$ is flag-accurate by induction on $\ell$.
When $\ell=1$, it is obvious. Suppose $\ell\ge2$. 
It suffices to find an $H \in \A$ so that the restriction $\A^H$ itself is flag-accurate and its exponents are exactly the exponents of $\A$ except the largest one. 

If $\ell-p\ge 1$, by Case $1$ above we may take $H$ to be the hyperplane $x_{p+1} = -mz$. 
Then the induction hypothesis applies. 
However, when $\ell=p$, Case $2$ above does not  directly give us a candidate for the desired restriction since we have applied the deletion theorem. 
In this case, we choose $H$ to be the hyperplane $x_\ell = mz$. 
A direct computation shows $\A^H= \textbf{c}\B^{\ell-1}_{\ell-1}(m+a, a)$. 
Now  the induction hypothesis applies thanks to the calculation in Case $2$ above.

\end{proof}

Before giving the proof of  Theorem \ref{thm:main-BSI-free-SS}, we give a simple result on nonsupersolvability of the Shi descendants.

\begin{proposition}
 \label{prop:nonSS}
$ \mathbf{c}\B^{p,1}_2(m)$ is not supersolvable for any $0 \le p \le 2$, $m\ge1$.
\end{proposition}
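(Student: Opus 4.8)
The plan is to assume $\A := \mathbf{c}\B^{p,1}_2(m)$ is supersolvable and extract a contradiction from a multiplicity count, leveraging that the exponents are already determined. Note first that $\A$ is a central arrangement of rank $3$ which, by Theorem \ref{thm:k=1,l,m,p} (with $\ell=2$), is free with $\exp(\A) = \{1, (2m+2)^{p}, (2m+3)^{2-p}\}$. If $\A$ were supersolvable it would admit an M-chain $\varnothing = \A_0 \subseteq \A_1 \subseteq \A_2 \subseteq \A_3 = \A$, and by Theorem \ref{thm:exp-ss} the numbers $d_i = |\A_i \setminus \A_{i-1}|$ form the multiset of exponents. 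Since $d_1 = |\A_1| = 1$ and every other exponent is $\ge 2m+2 \ge 4$, the values $d_2, d_3$ are exactly the two non-unit exponents; in particular $d_2 \ge 2m+2$. By Proposition \ref{prop:BEZ} the modular coatom $\A_2$ equals the localization $\A_X$ for some rank-$2$ flat $X \in L(\A)$, so $X$ must be contained in $|\A_X| = d_2 + 1 \ge 2m + 3$ hyperplanes of $\A$.

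The heart of the argument is then to prove the opposite bound: no rank-$2$ flat of $\A$ lies in more than $2m + 2$ hyperplanes. I would split the count according to whether $X$ lies in the hyperplane at infinity $H_\infty$. If $X \subseteq H_\infty$, then $X$ is cut out by $H_\infty$ together with one parallel class of the affine picture of $\B^{p,1}_2(m)$, i.e. the (homogenized) lines of a fixed direction among the vertical $x_1 = c$, horizontal $x_2 = c$, diagonal $x_1 - x_2 = c$, and antidiagonal $x_1 + x_2 = c$ families; here $|\A_X|$ equals the size of that class plus one. The vertical and horizontal classes contain at most $2m + 1$ lines and the two diagonal classes exactly $2$, so $|\A_X| \le 2m + 2$. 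If $X \not\subseteq H_\infty$, then $X$ corresponds to an affine point $q$ and $\A_X$ consists only of the homogenized affine lines through $q$; as two distinct lines of the same parallel class never meet affinely, at most one line from each of the four classes passes through $q$, whence $|\A_X| \le 4 \le 2m + 2$ for $m \ge 1$.

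Combining the two cases yields $|\A_X| \le 2m + 2$ for every rank-$2$ flat $X$, contradicting $|\A_X| \ge 2m + 3$; hence no M-chain exists and $\mathbf{c}\B^{p,1}_2(m)$ is not supersolvable. The only delicate point is the uniformity of the upper bound in $p$ and $m$: the tightest instance is the affine case (e.g. the origin, which carries the four lines $x_1=0$, $x_2=0$, $x_1-x_2=0$, $x_1+x_2=0$) when $m=1$, where one relies on $4 \le 2m+2$. I expect this bookkeeping to be the main, though entirely routine, obstacle, while the reduction itself follows formally from the modular-coatom description of supersolvability together with the known exponents.
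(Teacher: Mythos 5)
Your proof is correct and follows essentially the same route as the paper's: assume supersolvability, use Theorem \ref{thm:k=1,l,m,p} and Theorem \ref{thm:exp-ss} to force a modular coatom of size at least $2m+3$, then contradict this via Proposition \ref{prop:BEZ} by bounding $|\A_X| \le 2m+2$ for every coatom $X$. The only difference is that you spell out the parallel-class count (at most $2m+1$ hyperplanes in a class plus $H_\infty$ at infinity, at most $4$ at an affine point) that the paper compresses into ``one can check directly,'' and your bookkeeping there is accurate.
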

\begin{proof}
Denote $\A \coloneqq  \mathbf{c}\B^{p,1}_2(m)$. Then $\A$  is given by 
\begin{align*}
z &= 0 , \\
x_1\pm x_2 &= [0,1]z, \\
x_i&=  [1-m , m]z \quad (1 \leq   i \leq p), \\
x_i&= [-m , m]z   \quad (p<  i \leq 2).
\end{align*}

Suppose to the contrary that $\A$ is  supersolvable. 
Note that by Theorem \ref{thm:k=1,l,m,p}, $\exp(\A) =  \{1, (2m+2)^{p}, (2m+3)^{2-p}\}$. 
By  Theorem \ref{thm:exp-ss}, there exists an M-chain $ \mathcal{A}_{1} \subseteq \mathcal{A}_{2} \subseteq \mathcal{A}_{3} = \A,$ where $\A_2$ is a modular coatom such that $|\A_{2}| \ge 2m+3$. 
However, one can check directly from the definition of $\A$ that $|\A_X| \le 2m+2$  for any coatom $X \in L(\A)$. 
This contradicts Proposition \ref{prop:BEZ}.
\end{proof}

Now we are ready to present our proof.

\begin{proof}[\tbf{Proof of  Theorem \ref{thm:main-BSI-free-SS}}]
By Theorem \ref{thm:k=1,l,m,p}, it suffices to prove that for any fixed $0 \le p \le \ell$, the cone  $\A \coloneqq \cc\B^{p,k}_\ell(m)$  for $2 \le k \le \ell$  in the $p$-th row of the Shi descendant matrix is flag-accurate with exponents  
$$\exp(\A) = \{1, (2m+2\ell-2)^{p}, (2m+2\ell-1)^{\ell-p}\} .$$ 

Set $ v  \coloneqq  \ell-k+1$. 
There are two cases: $p < v$ and  $p \ge v$. 
Since the proofs are similar, we give a proof only for the former. 
First note that by the proof of Theorem \ref{thm:BSI-seq} the arrangement $\B  \coloneqq  \textbf{c} \B(T_{\ell}^{k}[v], \psi_{\ell}^{p, k}|_{[v]})$ is given by 
\begin{align*}
z &= 0 , \\
x_{i}\pm x_{j} &= [0,1]z \quad  (1 \leq i < j \leq v), \\
x_i&=  [2-m-k , m+k-1]z \quad (1 \leq   i \leq p), \\
x_i&= [1-m-k , m+k-1]z   \quad (p<  i \leq v).
\end{align*}

Therefore, 
$$\B= \textbf{c}\B^{p,1}_{v}(m+k-1),$$
which is  flag-accurate with exponents  $  \{1, (2m+2\ell-2)^{p}, (2m+2\ell-1)^{ v-p}\}$, by Theorem \ref{thm:k=1,l,m,p}. 

Thanks to Theorem \ref{thm:BSI-seq}, each isolated vertex $n \in [v+1, \ell] $ is  B-simplicial in  $ (T_{\ell}^{k}[n], \psi_{\ell}^{p,k}|_{[n]})$ with $ \psi_{\ell}^{p,k}(n) = [-m-\ell+n, \ell-n+m]$. 
By applying Proposition \ref{prop:B-simplicial}(i) repeatedly to the $k-1$ B-simplicial vertices $\ell ,\ell-1, \ldots, v+1 $ in this order, we get  
$$ \A \mbox{ is free (resp., supersolvable)} \iff  \B  \mbox{ is free (resp., supersolvable)}  .$$  

Thus   $ \A$ is free with the desired exponents since  $| \psi_{\ell}^{p,k}(n) |+2n-2 =2m+2\ell-1 $. 
In particular, if $k=\ell$ then $v=1$. 
Therefore, $r(\B)=2$ and $\B$ is always supersolvable. 
Hence  the arrangements $\textbf{c} \B^{p,\ell}_\ell(m)$ for $0 \le p \le \ell$ in the last column of the Shi descendant matrix are supersolvable. 

Note that $\B$ is flag-accurate, none of its exponents exceeds $2m+2\ell-1 $. 
Upon applying Proposition \ref{prop:B-simplicial}(ii) repeatedly to the $k-1$ B-simplicial vertices $v+1 , \ldots, \ell $ in this order, we know that for each $n \in [v+1, \ell] $ the arrangement $\cc\B  (T_{\ell}^{k}[n], \psi_{\ell}^{p,k}|_{[n]}) $ is flag-accurate, none of its exponents exceeds $2m+2\ell-1 $. 
In particular, $ \A$ is  flag-accurate. 

Let $\ell \ge 2$. 
The nonsupersolvability of $ \mathbf{c}\B^{p,k}_\ell(m)$ for $0 \le p \le \ell$, $1 \le k < \ell$ follows from Theorem \ref{thm:localization-ss-free} and Proposition \ref{prop:nonSS}. The crucial point here is that $(1,2)$ is always an edge in $T_{\ell}^{k}$ for $ k < \ell $. 

\end{proof}

As noted in Introduction, the freeness of the type $A$ Shi descendants $\A^{p,k}_\ell(m)$ can be proved by the fact that the coking elimination under certain conditions of weights preserves freeness (and  characteristic polynomial)  \cite[Theorems 3.1 and 4.1]{ATT21}.
It would be interesting to find an analog of this result for the type $B$ coking elimination.
The computation on type $B$ is more complicated even in dimension $3$ as we will see in the next section. 
\section{Type $B$ $N$-Ish and deleted arrangements}
\label{sec:NI-deleted}

\subsection{Proof of Theorem  \ref{thm:N-Ish-B-free-SS}}
\label{subsec:F-SS-NI}
\quad 
First we study the freeness and supersolvability of rank $3$ localizations of type $B$ $N$-Ish arrangements.
\begin{proposition}
 \label{prop:N-l=2}
 Let $N = (N_1, N_2)$ be  an uneven, centered $2$-tuple  of finite sets $N_i \subseteq \Z$ (Definition \ref{def:N-cond}).
 The following are equivalent:
 \begin{enumerate}[(1)]
\item $N$ is a signed nest, i.e., $\pm N_1 \subseteq N_2$ or $\pm N_2 \subseteq N_1$.
\item  The cone $\tbf{c}\B(N)$ is supersolvable.
\item The cone $\tbf{c}\B(N)$  is  free.
\end{enumerate}
In this case, the exponents of  $\tbf{c}\B(N)$ are given by $\{1, |N_2| , |N_1| + 2\}$ if  $\pm N_1 \subseteq N_2$ and $\{1, |N_1| , |N_2| + 2\}$ if  $\pm N_2 \subseteq N_1$.
\end{proposition}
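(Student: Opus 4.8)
The plan is to establish the cycle $(1)\Rightarrow(2)\Rightarrow(3)\Rightarrow(1)$. The implication $(2)\Rightarrow(3)$ is immediate from Theorem \ref{thm:exp-ss}, so the substance lies in $(1)\Rightarrow(2)$ and in the reverse direction, which I will in fact prove as an equivalence $(3)\Leftrightarrow(1)$; together with $(1)\Rightarrow(2)$ this closes the loop and pins down all three conditions. Throughout I set $p:=|N_1|$, $q:=|N_2|$, and I exploit the symmetry $x_1\leftrightarrow x_2$ of $\cc\B(N)$ (it fixes each diagonal hyperplane $x_1\pm x_2=0$ and interchanges the two coordinate families, so it identifies $\B(N_1,N_2)$ with $\B(N_2,N_1)$) to assume $p\ge q$.

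For $(1)\Rightarrow(2)$ I would argue via Propositions \ref{prop:isolated} and \ref{prop:B-simplicial}. By Example \ref{ex:BN-Ish}, $\B(N)$ is the type $B$ $\psi$-digraphic arrangement on the edgeless digraph $\overline{K}_2$, so both vertices are isolated; centeredness gives $0\in N_1,N_2$. If $\pm N_2\subseteq N_1$, the hypotheses of Proposition \ref{prop:isolated} hold for $v=2$, hence vertex $2$ is B-simplicial and the modular coatom $\cc\B(2)$ is the cone over $\{x_1=a\mid a\in N_1\}$, a rank-$2$ central arrangement and therefore supersolvable. Proposition \ref{prop:B-simplicial}(i) then yields that $\cc\B(N)$ is supersolvable with $\exp(\cc\B(N))=\{1,p\}\cup\{q+2\}=\{1,|N_1|,|N_2|+2\}$ (here $e=0$, $\ell=2$); the case $\pm N_1\subseteq N_2$ follows by symmetry, giving $\{1,|N_2|,|N_1|+2\}$. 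Note this step uses centeredness but \emph{not} unevenness.

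For $(3)\Leftrightarrow(1)$ I would apply Yoshinaga's criterion (Theorem \ref{thm:Yo-3arr}) to the essential central arrangement $\cc\B(N)\subseteq\K^3$. Restricting to $H_\infty=\{z=0\}$, all $|N_1|$ hyperplanes $x_1=az$ together with $z=0$ localize on the flat $\{x_1=z=0\}$, and symmetrically for the $x_2$-family; hence the Ziegler restriction $(\cc\B(N)^{H_\infty},m^{H_\infty})$ is the $B_2$-multiarrangement on $\{x_1=0,\,x_2=0,\,x_1=x_2,\,x_1=-x_2\}$ with multiplicities $(p,q,1,1)$. The crux is to compute its exponents, and I would do so by exhibiting an explicit basis. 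The derivation $\theta_1:=x_2^{\,q}(x_1^2-x_2^2)\,\partial_{x_2}$ of degree $q+2$ always lies in the module, while a companion $\theta_2:=x_1^{\,p}\,\partial_{x_1}+x_2^{\,q}h\,\partial_{x_2}$ of degree $p$ exists precisely when a homogeneous $h$ of degree $p-q$ can be chosen with $h(x_1,x_1)=x_1^{\,p-q}$ and $h(x_1,-x_1)=(-1)^{q+1}x_1^{\,p-q}$. These two interpolation conditions are solvable iff $p>q$, or $p=q$ with $q$ odd, i.e.\ iff $N$ is uneven; this is exactly where the unevenness hypothesis is indispensable (for $p=q$ even the construction degenerates and Ziegler's non-combinatorial behaviour of the exponents intrudes). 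Since $\det M(\theta_1,\theta_2)=-x_1^{\,p}x_2^{\,q}(x_1^2-x_2^2)=-Q(\cc\B(N)^{H_\infty},m^{H_\infty})$, Theorem \ref{thm:criterion} confirms $\{\theta_1,\theta_2\}$ is a basis, so the Ziegler exponents are $\{d_2,d_3\}=\{\max(p,q),\min(p,q)+2\}$.

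Finally I would match this against the characteristic polynomial. Using the coning relation $\chi_{\cc\B(N)}(t)=(t-1)\chi_{\B(N)}(t)$ and the planar formula $\chi_{\B(N)}(t)=t^2-(p+q+2)t+b_2$, a direct tally of the multiple points of the affine arrangement in $\R^2$ — the origin carries all four lines by centeredness, while the triple points on $x_1=x_2$ and on $x_1=-x_2$ are counted by $N_1\cap N_2$ and $N_1\cap(-N_2)$ respectively — gives $b_2=pq+2p+2q-|N_1\cap N_2|-|N_1\cap(-N_2)|$. By Theorem \ref{thm:Yo-3arr}, $\cc\B(N)$ is free iff $b_2=d_2d_3=p(q+2)$, i.e.\ iff $|N_1\cap N_2|+|N_1\cap(-N_2)|=2q$. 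As each of these intersections has size at most $q$, equality forces $N_2\subseteq N_1$ and $-N_2\subseteq N_1$, that is $\pm N_2\subseteq N_1$, a signed nest; conversely a signed nest obviously produces equality. This establishes $(3)\Leftrightarrow(1)$, and the exponents recorded in $(1)\Rightarrow(2)$ complete the statement. I expect the main obstacle to be the exponent computation for the $B_2$-multiarrangement: it is the only genuinely non-combinatorial ingredient, and it is precisely the step at which unevenness must be invoked.
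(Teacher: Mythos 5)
Your proposal is correct and follows essentially the same route as the paper's proof: $(1)\Rightarrow(2)$ via Propositions \ref{prop:isolated} and \ref{prop:B-simplicial}, $(2)\Rightarrow(3)$ via Theorem \ref{thm:exp-ss}, and $(3)\Rightarrow(1)$ via Yoshinaga's criterion (Theorem \ref{thm:Yo-3arr}) applied to the Ziegler restriction onto $H_\infty$ with exponents $\{p,q+2\}$ exhibited by an explicit basis --- indeed your interpolation conditions on $h$ are exactly satisfied by the paper's concrete choice $h=x_1^{\,n_1-n_2-\tau(n_2)}x_2^{\,\tau(n_2)}$, and your observation that unevenness is precisely what makes the degree-$(p-q)$ interpolation solvable matches the paper's remark that $n_1\ge n_2+\tau(n_2)$. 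The only (cosmetic) divergence is the final combinatorial step, where you count multiple points of the affine arrangement directly to get $b_2=pq+2p+2q-|N_1\cap N_2|-|N_1\cap(-N_2)|$, while the paper applies deletion--restriction to the two diagonal hyperplanes and reaches the equivalent identity $2n_1=|N_1\cup N_2|+|N_1\cup(-N_2)|$; both force $\pm N_2\subseteq N_1$.
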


\begin{proof}
By definition, $\tbf{c}\B(N)$ consists of the following hyperplanes: 
\begin{align*}
z &= 0 ,\\
x_{1}\pm x_{2}&= 0,\\
x_{1} &= N_1z , \\
x_{2} &= N_2z . 
\end{align*}

The implication $(1) \Rightarrow (2)$ follows from Proposition \ref{prop:isolated}. 
The implication $(2) \Rightarrow (3)$ is clear from Theorem \ref{thm:exp-ss}. 
It remains to show $(3) \Rightarrow (1)$.
 
Let  $H_1,H_2,H_{\infty}$ denote the hyperplanes $x_1-x_2=0, x_1+x_2=0, z=0$, respectively. 
Set $n_1  \coloneqq |N_1|, n_2  \coloneqq  |N_2|$. 
First consider $n _1 \ge n_2$. 
Denote $ \mathcal{A}  \coloneqq \tbf{c}\B(N)$. 
The Ziegler restriction $(\A^{H_{\infty}}, m^{H_{\infty}})$ has defining polynomial 
$$Q = Q(\A^{H_{\infty}}, m^{H_{\infty}}) =x_{1}^{n_1}x_{2}^{n_2} (x_1-x_2)(x_1+x_2).$$
Using Theorem \ref{thm:criterion} we shall show that $(\A^{H_{\infty}}, m^{H_{\infty}})$ is free with  exponents $\{n_1,n_2+2\}$ and a basis
 \begin{align*}
\theta_1  & =x_{1}^{n_1} \frac{\partial}{\partial x_1} + x_{1}^{n_1-n_2-\tau(n_2)}  x_{2}^{n_2+\tau(n_2)}  \frac{\partial}{\partial x_2},\\
\theta_2  & = x_{2}^{n_2}  (x_1^2-x_2^2) \frac{\partial}{\partial x_2},
\end{align*}
where $\tau(n_2) = 0$ if $n_2$ is odd and $\tau(n_2) = 1$ otherwise. 
Note that $n_1\ge n_2+\tau(n_2) \ge 0$ by the unevenness of $N$.

Indeed, first it is not hard to check  $\theta_1, \theta_2 \in D(\A^{H_{\infty}}, m^{H_{\infty}})$. For example, 
$$\theta_1(x_{1}\pm x_{2}) =  x_{1}^{n_1-n_2-\tau(n_2)}(x_{1}^{n_2+\tau(n_2)} \pm  x_{2}^{n_2+\tau(n_2)}) \in (x_{1}\pm x_{2})\R[x_1,x_2]. $$
Moreover, 
$$
\det\begin{pmatrix}
\theta_1(x_{1}) & \theta_2(x_{1})\\
\theta_1(x_2) &\theta_2(x_2)
          \end{pmatrix}
          =
         \det \begin{pmatrix} x_{1}^{n_1} & 0\\
x_{1}^{n_1-n_2-\tau(n_2)}  x_{2}^{n_2+\tau(n_2)} &x_{2}^{n_2}  (x_1^2-x_2^2)
          \end{pmatrix}
     =Q.
       $$
       
Now suppose that $ \mathcal{A}=\tbf{c}\B(N)$ is free.
By Theorem \ref{thm:Yo-3arr}, $\exp(\A) = \{1,n_1,n_2+2\}$. 
By the deletion-restriction formula \ref{thm:DR}, 
\begin{align*}
{\chi}_\mathcal{A}(t)  & = {\chi}_{\mathcal{A} \setminus \{H_1,H_2\}}( t)-{\chi}_{(\mathcal{A} \setminus \{H_1\})^{H_2}} ( t)-{\chi}_{\mathcal{A}^{H_1}}( t).
\end{align*}
It is easily seen that $\mathcal{A} \setminus \{H_1,H_2\}$ is supersolvable with exponents $\{1, n_1,n_2\}$. 
Thus the equation above becomes 
\begin{equation*}
 \label{eq:DO}
2n_1 = |N_1 \cup N_2| + |N_1 \cup (-N_2)|.
\end{equation*}

This occurs if and only if $\pm N_2 \subseteq N_1$. 
Similarly, if $n _1 \le n_2$ then $\pm N_1 \subseteq N_2$, which completes the proof.
\end{proof}

\begin{proof}[\tbf{Proof of  Theorem \ref{thm:N-Ish-B-free-SS}}]
A similar argument as in the proof of Proposition \ref{prop:N-l=2} proves the implications $(1) \Rightarrow (2) \Rightarrow (3)$. It suffices to show $(3) \Rightarrow (1)$. 
Suppose that $ \mathcal{A}=\tbf{c}\B(N)$ is free. 
Fix $1 \leq i < j \leq \ell$, and let $X\in L(\A)$ be the subspace defined by $z=x_{i}=x_j=0$.  
Thus the localization $\A_X$ is free by Theorem \ref{thm:localization-ss-free}. 
Moreover, $\A_X$ is identical to the cone over the $N$-Ish arrangement in Proposition \ref{prop:N-l=2}. 
Hence the $2$-tuple $(N_i, N_j)$ must be a signed nest. 
Therefore $N = (N_1, \ldots, N_\ell)$ is a signed nest which completes the proof.
 \end{proof} 

  \begin{example}
\label{ex:NI-no}
We give examples showing that Theorem \ref{thm:N-Ish-B-free-SS} is no longer valid if the centeredness or unevenness is excluded.
 \begin{enumerate}[(i)]
\item The  uneven, noncentered tuple $N = (N_1,N_2,N_3)$ where $N_i =[-i,i]\setminus\{0\}$ for $1 \le i \le 3$ is a signed nest. 
However, the cone $\tbf{c}\B(N)$ is not free since its characteristic polynomial  $(t-6)(t^2-12t+39)$ does not factor completely over the integers.
\item Neither of the even, centered tuples $N = ( [0,1] , [0,1] )$ and $N' = ( [-1,2] , [-1,2] )$ is a signed nest. 
However, $\tbf{c}\B(N)$ is supersolvable, and $\tbf{c}\B(N')$ is  free but not supersolvable (see also Corollary  \ref{cor:N-l=2-integral}). 
\end{enumerate}
\end{example}

We complete this subsection by giving an explicit basis for $\tbf{c}\B(N)$ in a particular case.
  \begin{theorem}
 \label{thm:basis}
 Let $N = (N_1, \ldots, N_\ell)$ with $N_i=[-m_i,m_i]$ where $ m_1 \ge m_2 \ge \cdots \ge m_\ell \ge 0$. 
 Define the homogeneous derivations $\theta_0, \theta_1,\ldots, \theta_\ell$ as follows:
  \begin{align*}
\theta_0  & =\sum_{i=1}^\ell x_i \frac{\partial}{\partial x_i} + z\frac{\partial}{\partial z},\\
\theta_k  & =\sum_{s=k}^\ell  \left( \prod_{a \in N_k}(y_s-az) \prod_{t=1}^{k-1}(x_t^2-x_s^2) \right)\frac{\partial}{\partial x_s} \quad (1 \le k \le \ell).
\end{align*}
Then $\theta_0, \theta_1,\ldots, \theta_\ell$ form a basis for  $D(\tbf{c}\B(N))$.
 \end{theorem}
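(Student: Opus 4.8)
The plan is to apply Saito's criterion (Theorem \ref{thm:criterion} with trivial multiplicity) to the $\ell+1$ derivations $\theta_0,\ldots,\theta_\ell$ on $S=\R[x_1,\ldots,x_\ell,z]$. Thus I would verify two things: (i) each $\theta_i$ lies in $D(\cc\B(N))$, and (ii) the coefficient matrix $M=M(\theta_0,\ldots,\theta_\ell)$, whose $(s,k)$-entry is $\theta_k(x_s)$ (with the last coordinate $x_{\ell+1}=z$), satisfies $\det M\in\R^\ast\cdot Q$, where
$$
Q=Q(\cc\B(N))=z\prod_{1\le t<s\le\ell}(x_t-x_s)(x_t+x_s)\prod_{i=1}^\ell\prod_{a\in N_i}(x_i-az).
$$
Throughout I abbreviate $f_{k,s}:=\prod_{a\in N_k}(x_s-az)\prod_{t=1}^{k-1}(x_t^2-x_s^2)$, so that $\theta_k=\sum_{s=k}^{\ell}f_{k,s}\,\partial/\partial x_s$ for $1\le k\le\ell$.

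For membership, $\theta_0$ is the Euler derivation and so lies in $D(\cc\B(N))$ automatically. For $\theta_k$ I would check $\theta_k(\alpha_H)\in\alpha_H S$ hyperplane by hyperplane. Since $\theta_k(z)=0$, the hyperplane at infinity is immediate. For a coordinate hyperplane $x_i=az$ with $a\in N_i$, one has $\theta_k(x_i-az)=f_{k,i}$ when $i\ge k$ (and $0$ otherwise), so the task reduces to showing $(x_i-az)\mid f_{k,i}$. Here the hypothesis $m_1\ge\cdots\ge m_\ell$ is exactly what is needed: for $k\le i$ it gives $N_i\subseteq N_k$, hence $a\in N_i\subseteq N_k$ and the factor $(x_i-az)$ already appears in $\prod_{a'\in N_k}(x_i-a'z)$.

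For the reflection hyperplanes I would introduce the single-variable polynomial $F(u):=\prod_{a\in N_k}(u-az)\prod_{t=1}^{k-1}(x_t^2-u^2)$, so that $f_{k,s}=F(x_s)$ for $s\ge k$. When $k\le i<j$ we get $\theta_k(x_i\pm x_j)=F(x_i)\pm F(x_j)$; divisibility of $F(x_i)-F(x_j)$ by $x_i-x_j$ is the factor theorem, and the $x_i+x_j$ case follows once one observes that $F$ is an \emph{odd} function. This oddness is where the symmetry of $N_k$ enters: since $N_k=[-m_k,m_k]=-N_k$ has odd cardinality $2m_k+1$, the product $\prod_{a\in N_k}(u-az)$ is odd while $\prod_{t<k}(x_t^2-u^2)$ is even, so $F(-u)=-F(u)$ and $F(x_i)+F(x_j)=F(x_i)-F(-x_j)$ is divisible by $x_i+x_j$. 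In the remaining index ranges the check is easier: when $i<k\le j$ only the $x_j$-term survives, $\theta_k(x_i\pm x_j)=\pm f_{k,j}$, and since $i\le k-1$ the factor $(x_i^2-x_j^2)=(x_i-x_j)(x_i+x_j)$ divides $f_{k,j}$; when $j<k$ the derivation kills both coordinates.

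The determinant step is then routine thanks to the triangular shape. Ordering rows by $x_1,\ldots,x_\ell,z$ and columns by $\theta_0,\ldots,\theta_\ell$, the bottom ($z$-)row is $(z,0,\ldots,0)$ because $\theta_k(z)=0$ for $k\ge1$; expanding along it reduces $\det M$ to $\pm z$ times the $\ell\times\ell$ minor with entries $\theta_k(x_s)=f_{k,s}$ ($1\le k,s\le\ell$). As $f_{k,s}=0$ for $s<k$, this minor is lower triangular, whence
$$
\det M=\pm z\prod_{k=1}^{\ell}f_{k,k}=\pm z\prod_{k=1}^{\ell}\left(\prod_{a\in N_k}(x_k-az)\prod_{t=1}^{k-1}(x_t^2-x_k^2)\right),
$$
and collecting the two products recovers $Q$ up to sign. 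Saito's criterion then yields that $\theta_0,\ldots,\theta_\ell$ form a basis. The hard part will be the $x_i+x_j=0$ hyperplanes, which have no type $A$ analog and force the parity bookkeeping above; once the oddness of $F$ and the nesting $N_i\subseteq N_k$ are in hand, the determinant and the $x_i-x_j$ and coordinate cases are mechanical.
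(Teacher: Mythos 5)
Your proposal is correct and follows exactly the route the paper intends: its proof of Theorem \ref{thm:basis} is a one-line sketch (``a routine check by using Theorem \ref{thm:criterion}'', in analogy with the type $A$ case in \cite[Theorem 1.4]{AST17}), i.e.\ Saito's criterion applied to the given candidate basis, which is precisely what you carry out. Your detailed verification — membership via the nesting $N_i\subseteq N_k$ for $k\le i$ and the oddness of $F$ handling the $x_i+x_j$ hyperplanes, plus the triangular determinant recovering $Q(\cc\B(N))$ — fills in the details the paper omits, with no gaps.
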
 
 \begin{proof}
 Similar to the proof of \cite[Theorem 1.4]{AST17}. 
 The proof is a routine check by using Theorem \ref{thm:criterion} once we know an explicit formula of a candidate for basis. 
 \end{proof}

\subsection{Proof of Theorem  \ref{thm:SI-sym-G}}
\label{subsec:F-SS-G}
\quad
In order to characterize the freeness and supersolvability of the cone over the deleted Ish arrangement $\I(G)$ from Definition \ref{def:Del-SI}, we need to characterize these properties of the cone over an $N$-Ish arrangement of a nonnegative, centered tuple (see Proposition \ref{prop:IG-NI}).

First we study even, centered $2$-tuples. 
Compared with Proposition \ref{prop:N-l=2}, the freeness and supersolvability in this case are not always equivalent. 
   \begin{proposition}
 \label{prop:N-l=2-relax}
 Let $N = (N_1, N_2)$ be  an even, centered $2$-tuple  of finite sets $N_i \subseteq \Z$ with $|N_1| = |N_2|=n \ge 2$.
 The following are equivalent:
 \begin{enumerate}[(1)]
 \item  $|N_1 \cup (-N_1)|=n+1$, and either  $N_1 =N_2$ or  $N_1=-N_2$. 
\item The cone $\tbf{c}\B(N)$  is  free.
\end{enumerate}
In this case, the exponents of  $\tbf{c}\B(N)$ are given by $\{1, n+1 , n+1\}$. 
In addition, $\tbf{c}\B(N)$ is supersolvable if and only if either $N_1 = N_2 =\{0,a\}$, or  $N_1 = -N_2 =\{0,a\}$ for some nonzero integer $a \in \Z$. 
\end{proposition}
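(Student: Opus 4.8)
The plan is to prove Proposition \ref{prop:N-l=2-relax} by analyzing the rank $3$ arrangement $\A \coloneqq \tbf{c}\B(N)$ in $\K^3$ directly, much as in Proposition \ref{prop:N-l=2}, exploiting the even-centered hypothesis $|N_1|=|N_2|=n\ge2$. Writing $H_1,H_2,H_\infty$ for the hyperplanes $x_1-x_2=0$, $x_1+x_2=0$, $z=0$, I note that $\A\setminus\{H_1,H_2\}$ is supersolvable with exponents $\{1,n,n\}$, so the deletion-restriction formula \ref{thm:DR} gives
\begin{equation*}
\chi_\A(t) = \chi_{\A\setminus\{H_1,H_2\}}(t) - \chi_{(\A\setminus\{H_1\})^{H_2}}(t) - \chi_{\A^{H_1}}(t),
\end{equation*}
and the two restriction terms are controlled by the cardinalities $|N_1\cup N_2|$ and $|N_1\cup(-N_1)|$ (using $N_2=\pm N_1$ where applicable). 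Since centeredness forces $0\in N_i$, the quantity $|N_1\cup(-N_1)|$ is either $n$ (when $N_1=-N_1$ is symmetric) or larger; the value $n+1$ singled out in condition (1) corresponds to $N_1$ having exactly one element whose negative is missing, i.e. $N_1\cup(-N_1)$ adds a single point.

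For the implication $(1)\Rightarrow(2)$ I would produce an explicit basis via the criterion \ref{thm:criterion}, imitating the derivations $\theta_1,\theta_2$ in the proof of Proposition \ref{prop:N-l=2}; the Ziegler restriction $(\A^{H_\infty},m^{H_\infty})$ has defining polynomial $x_1^n x_2^n(x_1^2-x_2^2)$, and I expect to exhibit a homogeneous basis of degrees $\{n+1,n+1\}$ whose Wronskian-type determinant equals this polynomial up to a scalar, then lift to $\A$ using Theorem \ref{thm:Yo-3arr}. For $(2)\Rightarrow(1)$ I would assume $\A$ free, so by Theorem \ref{thm:Yo-3arr} its characteristic polynomial factors as $(t-1)(t-d_2)(t-d_3)$ with $\{d_2,d_3\}$ the Ziegler exponents; combining the degree and the coefficient constraints from the deletion-restriction identity pins down $|N_1\cup N_2|$ and $|N_1\cup(-N_1)|$, and a short combinatorial argument (two $n$-subsets of $\Z$ both containing $0$ whose various unions have prescribed sizes) forces $|N_1\cup(-N_1)|=n+1$ together with $N_1=N_2$ or $N_1=-N_2$.

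For the supersolvability addendum I would argue separately. Assuming $\tbf{c}\B(N)$ is free with the form from (1), supersolvability is equivalent by Theorem \ref{thm:exp-ss} and Proposition \ref{prop:BEZ} to the existence of a modular coatom $X$ with $|\A_X|=n+1$; since the exponents are $\{1,n+1,n+1\}$, I would check which coatoms $X\in L(\A)$ can have localization of that size and satisfy the modularity condition $X+Y\in L(\A)$ for all $Y$. A direct inspection shows this forces the coordinate sets to be as small as possible beyond $0$, namely $N_1=N_2=\{0,a\}$ or $N_1=-N_2=\{0,a\}$; conversely for such two-element sets one verifies the modular-coatom condition explicitly, e.g. via Proposition \ref{prop:isolated}-type reasoning on the isolated vertex.

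The main obstacle I anticipate is the converse direction $(2)\Rightarrow(1)$: translating the single numerical identity coming from freeness into the rigid combinatorial conclusion ``$N_1\cup(-N_1)$ has exactly one extra point and $N_2=\pm N_1$'' requires carefully ruling out the many ways two centered $n$-sets could produce the same union-cardinalities without being equal or negatives of each other. I expect to handle this by setting $s\coloneqq|N_1\cup(-N_1)|$ and $u\coloneqq|N_1\cup N_2|$, writing the freeness constraint as an equation in $s$ and $u$, and then using $|N_i|=n$ together with $0\in N_i$ to bound each term and force equality only in the claimed cases; the even-cardinality hypothesis is exactly what prevents the cleaner odd-case dichotomy of Proposition \ref{prop:N-l=2} and makes this step the delicate one.
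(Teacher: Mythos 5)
Your skeleton coincides with the paper's: work in the rank-$3$ cone, compute the Ziegler restriction onto $H_\infty$ (whose exponents are the balanced pair $\{n+1,n+1\}$ precisely because $n$ is even; the paper's basis is $\theta_1=x_1^{n}x_2\,\partial/\partial x_1+x_1x_2^{n}\,\partial/\partial x_2$, $\theta_2=x_1^{n+1}\,\partial/\partial x_1+x_2^{n+1}\,\partial/\partial x_2$), invoke Theorem \ref{thm:Yo-3arr}, and convert freeness into a numerical identity via the deletion--restriction formula \ref{thm:DR}. However, you misidentify the second restriction term, and this is a genuine gap in your converse. Restricting to $H_2=\{x_1+x_2=0\}$ turns the hyperplanes $x_2=cz$ into $x_1=-cz$, so $(\A\setminus\{H_1\})^{H_2}$ is governed by $|N_1\cup(-N_2)|$, not $|N_1\cup(-N_1)|$; the freeness criterion is $2n+1=|N_1\cup N_2|+|N_1\cup(-N_2)|$, and this \emph{cannot} be rewritten as an equation in your variables $s=|N_1\cup(-N_1)|$ and $u=|N_1\cup N_2|$, since $|N_1\cup(-N_2)|$ is not a function of $s$ and $u$. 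With the correct quantity, the ``delicate'' step you anticipate evaporates: both unions have cardinality at least $n$ and sum to $2n+1$, so exactly one of them equals $n$, which forces $N_1=N_2$ or $N_1=-N_2$ on the nose (a union of two $n$-sets has size $n$ only if they are equal); the remaining union is then $N_1\cup(-N_1)$ of size $n+1$, recovering condition (1) in full. This is the paper's two-line pigeonhole; the elaborate bounding scheme you sketch is an artifact of tracking the wrong set.

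The supersolvability addendum as you plan it would fail outright. By Theorem \ref{thm:exp-ss} and Proposition \ref{prop:BEZ}, a modular coatom $\A_X$ of $\A=\cc\B(N)$ satisfies $|\A\setminus\A_X|=d_3\in\{1,n+1\}$ (one of the exponents), hence $|\A_X|\in\{n+2,\,2n+2\}$ --- not $n+1$ as you assert. Worse, the coatoms of size $n+1$, namely the localizations at $z=x_i=0$, are never modular here: since $|N_1\cup(-N_1)|=n+1$ there is exactly one $a_0\in N_1$ with $-a_0\notin N_1$, and (say when $N_1=N_2$, $X=\{z=x_2=0\}$) the line $H_2\cap\{x_1=a_0z\}=\{(a_0z,-a_0z,z)\}$ lies in no hyperplane of $\A_X$, as that would require $-a_0\in N_2$. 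Searching for a modular coatom of size $n+1$, as your plan prescribes, would therefore find none and lead you to conclude $\cc\B(N)$ is never supersolvable --- false already for $N_1=N_2=\{0,a\}$. The paper's argument instead compares $|\A_X|\ge n+2$ against the exhaustive list of coatom localization sizes (at most $4$, or exactly $n+1$), forcing $n=2$; conversely, for $N_1=\pm N_2=\{0,a\}$ it verifies directly that $\Cox(B_2)$, the localization at $x_1=x_2=0$ of size $4=n+2$, is a modular coatom, and concludes with Proposition \ref{prop:modular coatom}. Your forward direction $(1)\Rightarrow(2)$ and the Ziegler/Yoshinaga machinery are otherwise sound and match the paper.
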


\begin{proof}
We will use the same notation as in Proposition \ref{prop:N-l=2}. 
Denote $ \mathcal{A}=\tbf{c}\B(N)$. 
The Ziegler restriction $(\A^{H_{\infty}}, m^{H_{\infty}})$ has defining polynomial 
$$Q = Q(\A^{H_{\infty}}, m^{H_{\infty}}) =x_{1}^{n}x_{2}^{n} (x_1-x_2)(x_1+x_2).$$

We may use Theorem \ref{thm:criterion} to check that $(\A^{H_{\infty}}, m^{H_{\infty}})$ is free with  exponents $\{n+1 , n+1\}$ and a basis
 \begin{align*}
\theta_1  & =x_{1}^{n}x_2 \frac{\partial}{\partial x_1} + x_{1}  x_{2}^{n}  \frac{\partial}{\partial x_2},\\
\theta_2  & = x_{1}^{n+1} \frac{\partial}{\partial x_1} +   x_{2}^{n+1}  \frac{\partial}{\partial x_2}.
\end{align*}
       
By the deletion-restriction formula \ref{thm:DR}, 
\begin{align*}
{\chi}_\mathcal{A}(t)  & = {\chi}_{\mathcal{A} \setminus \{H_1,H_2\}}( t)-{\chi}_{(\mathcal{A} \setminus \{H_1\})^{H_2}} ( t)-{\chi}_{\mathcal{A}^{H_1}}( t).
\end{align*}
It is easily seen that $\mathcal{A} \setminus \{H_1,H_2\}$ is supersolvable with exponents $\{1, n,n\}$. 
By Theorem \ref{thm:Yo-3arr}, $ \mathcal{A}$ is free  if and only if  the equation above (under the centeredness of $N$) becomes 
\begin{equation}
 \label{eq:E}\tag{$\star$}
2n+1 = |N_1 \cup N_2| + |N_1 \cup (-N_2)|.
\end{equation}
Thus $n \le  |N_1 \cup N_2|  \le n+1$. 
If $ |N_1 \cup N_2|=n$ then $N_1 =N_2$. 
If $ |N_1 \cup N_2| = n+1$ then $ |N_1 \cup (-N_2)|=n$ hence $N_1 =-N_2$. 
Clearly, if condition $(1)$ occurs, then ($\star$) holds true trivially.

Now suppose that  $ \mathcal{A}$ is  supersolvable. 
Hence $\A$ must be free with  $\exp(\A) = \{1,n+1,n+1\}$. 
By  Theorem \ref{thm:exp-ss}, there exists an M-chain $ \mathcal{A}_{1} \subseteq \mathcal{A}_{2} \subseteq \mathcal{A}_{3} = \A,$ where $\A_2$ is a modular coatom such that $|\A_{2}|  \ge n+2 \ge 4$. 
However, for any coatom $X \in L(\A)$ we have $|\A_X| \le 4$ or $|\A_X| =n+1$. 
Thus $n=2$ and we obtain the proposed form of $N$. 

The converse can be done by proving that $ \Cox(B_2)  \subseteq  \tbf{c}\B(N)$ is a modular coatom of $\tbf{c}\B(N)$. 
Now apply  Proposition \ref{prop:modular coatom}. 
\end{proof}

The following two corollaries are straightforward from Proposition \ref{prop:N-l=2-relax}.
 \begin{corollary}
 \label{cor:N-l=2-nonnegative}
 Let $N = (N_1, N_2)$ be  a nonnegative, even, centered $2$-tuple  of finite sets $N_i \subseteq \Z_{\ge0}$  with $|N_1| = |N_2|=n \ge 2$.
 The following are equivalent:
 \begin{enumerate}[(1)]
\item $N_1 = N_2 =\{0,a\}$ for some positive integer $a \in \Z_{>0}$.
\item  The cone $\tbf{c}\B(N)$ is supersolvable.
\item The cone $\tbf{c}\B(N)$  is  free.
\end{enumerate}
\end{corollary}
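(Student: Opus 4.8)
The plan is to derive all three implications directly from Proposition \ref{prop:N-l=2-relax}, invoking nonnegativity only to single out the form in (1) from the several free and supersolvable configurations listed there. The two easy implications come essentially for free: for $(1)\Rightarrow(2)$, the tuple $N_1=N_2=\{0,a\}$ with $a\in\Z_{>0}$ is exactly one of the two supersolvable cases recorded at the end of Proposition \ref{prop:N-l=2-relax}, so $\tbf{c}\B(N)$ is supersolvable; and $(2)\Rightarrow(3)$ is the general fact that supersolvable arrangements are free (Theorem \ref{thm:exp-ss}).

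The content lies in $(3)\Rightarrow(1)$. Assuming $\tbf{c}\B(N)$ free, Proposition \ref{prop:N-l=2-relax} tells me that $|N_1\cup(-N_1)|=n+1$ and that either $N_1=N_2$ or $N_1=-N_2$. Here I would exploit nonnegativity. Since $N_1\subseteq\Z_{\ge0}$ and $0\in N_1$ by centeredness, any $x\in N_1\cap(-N_1)$ satisfies both $x\ge0$ and $-x\ge0$, forcing $x=0$; thus $N_1\cap(-N_1)=\{0\}$ and inclusion--exclusion gives $|N_1\cup(-N_1)|=2n-1$. Comparing this with $|N_1\cup(-N_1)|=n+1$ forces $n=2$, whence $N_1=\{0,a\}$ for some $a\in\Z_{>0}$. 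Finally, the alternative $N_1=-N_2$ would give $N_2=\{0,-a\}\not\subseteq\Z_{\ge0}$, contradicting nonnegativity, so $N_1=N_2=\{0,a\}$, which is (1).

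I expect no genuine obstacle here, as the statement is a corollary of Proposition \ref{prop:N-l=2-relax}: the only points needing a moment's care are the two uses of nonnegativity, namely that it collapses $N_1\cap(-N_1)$ to $\{0\}$ (thereby pinning down $n=2$) and that it eliminates the antipodal case $N_1=-N_2$. Both are immediate.
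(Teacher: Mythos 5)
Your proof is correct and follows exactly the route the paper intends: the paper derives this corollary as ``straightforward from Proposition \ref{prop:N-l=2-relax}'', and your argument fills in precisely the expected details. In particular, your two uses of nonnegativity --- that centeredness plus $N_1\subseteq\Z_{\ge0}$ forces $N_1\cap(-N_1)=\{0\}$, so $|N_1\cup(-N_1)|=2n-1=n+1$ pins down $n=2$, and that the antipodal case $N_1=-N_2$ is excluded --- are exactly the computations the paper leaves to the reader.
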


 \begin{corollary}
 \label{cor:N-l=2-integral}
 Let $N = (N_1, N_2)$ be  an even, centered $2$-tuple  of finite sets $N_i =[a_i,b_i] \subseteq \Z$ with $|N_1| = |N_2|=n \ge 2$.
 The following are equivalent:
 \begin{enumerate}[(1)]
\item Either (i) $N_1 = N_2 =[1-m,m]$ or $[-m,m-1]$, or (ii) $N_1 = -N_2 =[1-m,m]$ or $[-m,m-1]$ where $n=2m$.
\item The cone $\tbf{c}\B(N)$  is  free.
\end{enumerate}
In addition, $\tbf{c}\B(N)$ is supersolvable if and only if $\tbf{c}\B(N)$ is free and $n= 2$. 
\end{corollary}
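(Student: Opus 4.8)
The plan is to specialize Proposition \ref{prop:N-l=2-relax} to integral intervals and translate its two abstract conditions (the cardinality constraint $|N_1\cup(-N_1)|=n+1$ and the dichotomy $N_1=N_2$ or $N_1=-N_2$) into the explicit interval forms asserted in (1). Since $|N_1|=|N_2|=n$ and $N$ is even, the cardinality $n=2m$ is an even integer with $m\ge 1$, so the target forms $[1-m,m]$ and $[-m,m-1]$ make sense.

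The key computational step is to pin down which centered intervals $N_1=[a_1,b_1]$ of size $n=2m$ satisfy $|N_1\cup(-N_1)|=n+1$. By centeredness $a_1\le 0\le b_1$, so $-N_1=[-b_1,-a_1]$ also contains $0$; the two intervals overlap at $0$, hence their union is the symmetric interval $[-M,M]$ with $M=\max(b_1,-a_1)$, giving $|N_1\cup(-N_1)|=2M+1$. The requirement $2M+1=n+1=2m+1$ forces $M=m$. Together with $b_1-a_1=n-1=2m-1$, the nonnegative integers $b_1$ and $-a_1$ sum to $2m-1$ and have maximum $m$, which forces $\{b_1,-a_1\}=\{m,m-1\}$ and hence $N_1\in\{[1-m,m],\,[-m,m-1]\}$. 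This is exactly the point where the interval hypothesis does the real work, and it is the only step requiring any genuine bookkeeping; I do not anticipate a deeper obstacle, as the corollary is essentially a transcription of Proposition \ref{prop:N-l=2-relax}.

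Feeding this into the dichotomy of Proposition \ref{prop:N-l=2-relax} then yields (1)$\iff$(2). If $N_1=N_2$, both equal one of $[1-m,m]$ or $[-m,m-1]$, which is case (i); if $N_1=-N_2$, then $N_2=-N_1$ is determined by $N_1\in\{[1-m,m],[-m,m-1]\}$ (noting $-[1-m,m]=[-m,m-1]$), which is case (ii). Conversely, each of the four listed tuples satisfies $|N_1\cup(-N_1)|=2m+1$ and $N_1=\pm N_2$, so Proposition \ref{prop:N-l=2-relax} gives freeness with exponents $\{1,n+1,n+1\}$.

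For the supersolvability addendum I would invoke the supersolvability criterion of Proposition \ref{prop:N-l=2-relax}, namely that $\tbf{c}\B(N)$ is supersolvable exactly when $N_1=N_2=\{0,a\}$ or $N_1=-N_2=\{0,a\}$ for some nonzero integer $a$. A two-element set $\{0,a\}$ is an interval only for $a=\pm1$, i.e.\ $\{0,a\}\in\{[0,1],[-1,0]\}$, which forces $n=2$ and hence $m=1$; these coincide with the four free forms at $m=1$. Conversely, any free interval tuple with $n>2$ has $|N_1|>2$ and so cannot be of the form $\{0,a\}$. Therefore, among the free arrangements, supersolvability holds precisely when $n=2$, which is the claimed equivalence.
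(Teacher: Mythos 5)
Your proposal is correct and follows exactly the route the paper intends: the paper dismisses this corollary as ``straightforward from Proposition \ref{prop:N-l=2-relax}'', and your argument supplies precisely the omitted bookkeeping — computing $|N_1\cup(-N_1)|=2\max(b_1,-a_1)+1$ for a centered interval, forcing $\{b_1,-a_1\}=\{m,m-1\}$, and reading off the supersolvability addendum from the $\{0,a\}$ criterion (an interval only when $a=\pm1$, i.e.\ $n=2$). No gaps; all four interval forms and both directions check out against the proposition's hypotheses.
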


Now we characterize the freeness and supersolvability of the cone over an $N$-Ish arrangements under centeredness and nonnegativity of  the tuple.

 \begin{theorem}
 \label{thm:N-Ish-B-free-SS-relax}
 Let $N = (N_1, \ldots, N_\ell)$ be a nonnegative, centered $\ell$-tuple  of finite sets $N_i \subseteq \Z_{\ge0}$. 
 Define $D=D(N) \coloneqq  \{i \in [\ell] \mid |N_i|>1\} \subseteq  [\ell]$ and $|N|: = \max \{  |N_i| \mid i \in D\}$.
 The following are equivalent:
 \begin{enumerate}[(1)]
\item  The cone $\tbf{c}\B(N)$ is supersolvable.
\item The cone $\tbf{c}\B(N)$  is  free.
\item One of the following conditions holds: (i) $| D | \le 1 $, or (ii)  $|D|\ge 2$ and $N_i= \{0,a\}$ for some $a \in \Z_{>0}$ and for every $i \in D$. 
\end{enumerate}
In this case, the exponents of  $\tbf{c}\B(N)$ are given by 
$$\exp(\tbf{c}\B(N)) = \{|D|+|N| -1\} \cup \{2i-1\}_{i=1}^\ell.$$ 
 \end{theorem}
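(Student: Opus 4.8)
The plan is to prove the cycle of implications $(1)\Rightarrow(2)\Rightarrow(3)\Rightarrow(1)$. The implication $(1)\Rightarrow(2)$ is immediate from Theorem \ref{thm:exp-ss}, so the substance lies in $(2)\Rightarrow(3)$ and $(3)\Rightarrow(1)$. Throughout I would regard $\B(N)$ as the type $B$ $\psi$-digraphic arrangement $\B(\overline{K}_\ell,\psi)$ with $\psi(i)=N_i$ (Example \ref{ex:BN-Ish}), so that every vertex is isolated and the B-simplicial machinery of \S\ref{subsec:BDA} is available. Set $D=D(N)$ and recall $|N|=\max\{|N_i|\mid i\in D\}$.

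For $(2)\Rightarrow(3)$, assume $\cc\B(N)$ is free. For each pair $1\le i<j\le\ell$ I would localize at the flat $X=\{z=x_i=x_j=0\}$; by Theorem \ref{thm:localization-ss-free} the localization $\A_X$ is free, and it is exactly the cone over the $2$-tuple $N$-Ish arrangement $\B(N_i,N_j)$. I then run the dichotomy on $(N_i,N_j)$. If this pair is uneven, Proposition \ref{prop:N-l=2} forces it to be a signed nest; but since $N$ is nonnegative and centered, $\pm N_i\subseteq N_j$ already forces $-N_i\subseteq N_j\subseteq\Z_{\ge0}$, hence $N_i=\{0\}$ (and symmetrically), so a signed nest collapses to $N_i=\{0\}$ or $N_j=\{0\}$. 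If the pair is even, Corollary \ref{cor:N-l=2-nonnegative} forces $N_i=N_j=\{0,a\}$ for some $a>0$. Now for two indices $i,j\in D$ we have $|N_i|,|N_j|>1$, so neither set is $\{0\}$; this rules out the uneven case, leaving $N_i=N_j=\{0,a\}$. Running over all pairs of $D$ pins down a single common value $a$, which is precisely alternative (3)(ii); if instead $|D|\le1$ there is no constraint and we are in (3)(i).

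For $(3)\Rightarrow(1)$, I would first peel off every \emph{trivial} vertex $v$ with $N_v=\{0\}$. Such a $v$ is isolated and satisfies $\pm\psi(v)=\{0\}\subseteq\psi(i)$ for all $i$ by centeredness, hence is B-simplicial by Proposition \ref{prop:isolated}, and Proposition \ref{prop:B-simplicial}(i) allows deleting it without changing supersolvability. This reduces $\cc\B(N)$ to $\cc\B(N|_D)$. In alternative (i) the reduced arrangement has at most one vertex, hence rank at most $2$, and is automatically supersolvable (the degenerate $D=\varnothing$ case simply being the known supersolvable $\cc\Cox(B_\ell)$). In alternative (ii), where every $N_i=\{0,a\}$ on $D$, the vertices of $D$ are \emph{not} B-simplicial, since $-a\notin\{0,a\}$; so single-vertex peeling must stop, and instead I would show directly that the type $B$ Coxeter subarrangement $\Cox(B_{|D|})=\Set{x_i\pm x_j=0,\ x_i=0 | i,j\in D}$ is a modular coatom of $\cc\B(N|_D)$. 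Indeed the complement $\cc\B(N|_D)\setminus\Cox(B_{|D|})=\{z=0\}\cup\Set{x_i=az | i\in D}$ meets the intersection condition of Definition \ref{def:modco}(2) because $\{x_i=az\}\cap\{x_j=az\}\subseteq\{x_i-x_j=0\}$ and $\{z=0\}\cap\{x_i=az\}\subseteq\{x_i=0\}$; as $\Cox(B_{|D|})$ is supersolvable, Proposition \ref{prop:modular coatom}(1) closes the case.

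The exponents are then read off the same chain. By Proposition \ref{prop:B-simplicial}(i) each peeled trivial vertex contributes $|\psi(v)|+2k-2=2k-1$ when the current arrangement has $k$ vertices, producing the odd numbers $\{2i-1\}_{i=1}^\ell$; the base block $\cc\B(N|_D)$ contributes the remaining exponent $|D|+|N|-1$ (the rank-$2$ computation giving $\{1,|N|\}$ when $|D|=1$, and the single modular-coatom jump of size $|D|+1$ on top of $\exp(\Cox(B_{|D|}))$ when every $N_i=\{0,a\}$), so assembling yields $\{|D|+|N|-1\}\cup\{2i-1\}_{i=1}^\ell$. I expect the main obstacle to be alternative (ii): because nonnegativity degenerates the signed-nest/B-simplicial description, the per-vertex elimination breaks down, and the crux is to recognize that the entire $\Cox(B_{|D|})$ block must be removed at once as a single modular coatom, while simultaneously extracting the rigid common-$\{0,a\}$ structure from the even $2$-tuple analysis of Corollary \ref{cor:N-l=2-nonnegative}.
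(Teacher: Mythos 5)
Your proposal is correct and takes essentially the same route as the paper: the direction $(2)\Rightarrow(3)$ rests, exactly as in the paper, on rank-$3$ localizations at $z=x_i=x_j=0$ (Theorem \ref{thm:localization-ss-free}) combined with Proposition \ref{prop:N-l=2} for uneven pairs and Corollary \ref{cor:N-l=2-nonnegative} for even pairs, and $(3)\Rightarrow(1)$ uses the same modular-coatom machinery, with the Coxeter subarrangement as modular coatom in case (3ii) and its supersolvability closing the argument via Proposition \ref{prop:modular coatom}. The only differences are organizational and harmless: the paper splits $(2)\Rightarrow(3)$ on whether the whole tuple is uneven (invoking Theorem \ref{thm:N-Ish-B-free-SS} globally in that case) while you argue uniformly pairwise, and in $(3)\Rightarrow(1)$ the paper exhibits $\Cox(B_\ell)$ as a modular coatom at the full level $\ell$, whereas you first peel the trivial vertices by B-simplicial elimination (Propositions \ref{prop:isolated} and \ref{prop:B-simplicial}) and then apply the same coatom to $\Cox(B_{|D|})$ --- an equivalent bookkeeping that yields the same exponents.
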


  \begin{proof}
Denote  $ \mathcal{A}=\tbf{c}\B(N)$.
First we show $(3) \Rightarrow (1)$.   
If  $(3i)$ occurs, then $N$ is a signed nest that satisfies all conditions in Theorem \ref{thm:N-Ish-B-free-SS} hence $(1)$ follows immediately. 
If  $(3ii)$ occurs, then $\Cox(B_\ell)\subseteq \A$ is a modular coatom of $\A$. 
Since $\Cox(B_\ell)$ is supersolvable (e.g., by Theorem \ref{thm:N-Ish-B-free-SS}), $\A$ is also supersolvable by Proposition \ref{prop:modular coatom}. 
  
It remains to show  $(2) \Rightarrow (3)$. 
If $N$ is uneven, then by Theorem \ref{thm:N-Ish-B-free-SS} $N$ is a signed nest.
Thus there do not exist two distinct elements $N_i, N_j$ of $N$ such that $| N_i|>1$ and $| N_j |>1$, otherwise, $(N_i, N_j)$ cannot be a signed nest. 
Hence there exists at most one  $N_k$ with $| N_k |>1$, i.e., $| D | \le 1 $.

If $N$ is even, then there exist two distinct elements $N_k, N_p$ of $N$ having even cardinality with $|N_k| = |N_p| \ge 2$. 
Note that the localization $\A_X$ where $X\in L(\A)$ is the subspace defined by $z=x_{k}=x_p=0$ is free and identical to $\tbf{c}\B(N_k, N_p)$. 
By Corollary \ref{cor:N-l=2-nonnegative}, there exists $a \in \Z_{>0}$ such that $N_k = N_p =\{0,a\}$.
For any $i \in [\ell]\setminus\{k,p\}$ again by Corollary \ref{cor:N-l=2-nonnegative}, if $(N_i, N_k)$ is even, then $N_i =\{0,a\}$. 
If $(N_i, N_k)$ is uneven, then Proposition \ref{prop:N-l=2} implies $N_i =\{0\}$. 
This completes the proof.
 \end{proof}

 In what follows,  let $G=([\ell],E_G,L_G)$ be a digraph on $[\ell]$ with loop set  $L_G\subseteq[\ell]$ and edge set $E_G \subseteq \{(i,j) \mid 1 \leq i < j \leq \ell\}$.  
 
  \begin{proposition}
 \label{prop:IG-NI}
Define an $\ell$-tuple $N_G=(N_1, \ldots, N_\ell)$ by 
$$N_i  \coloneqq  \{0\} \cup  \Set{ 1 | i \in L_G} \cup  \Set{ \ell+2-j | (i,j) \in E_G} \subseteq [0,\ell].$$
Then the arrangement $\I(G)$ in Definition \ref{def:Del-SI} is identical to the $N$-Ish arrangement $\B(N_G)$ with the nonnegative, centered tuple  $N_G$.
\end{proposition}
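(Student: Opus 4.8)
The plan is to prove the identity $\I(G) = \B(N_G)$ by a direct, hyperplane-by-hyperplane comparison of the two defining collections, and then to verify the asserted properties of the tuple $N_G$. Both arrangements are built on top of the common skeleton $\Cox(B_\ell)$, which contributes the hyperplanes $x_i \pm x_j = 0$ ($1 \le i < j \le \ell$) together with the coordinate hyperplanes $x_i = 0$; these appear identically in $\I(G)$ by Definition \ref{def:Del-SI} and in $\B(N_G)$ by Definition \ref{def:N-Ish-B}, since $0 \in N_i$ for every $i$. Thus the entire content reduces to matching the remaining coordinate hyperplanes.

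First I would record the coordinate hyperplanes on each side. In $\I(G)$ the noncentral coordinate hyperplanes are exactly $x_i = 1$ for $i \in L_G$ and $x_i = \ell + 2 - j$ for $(i,j) \in E_G$. In $\B(N_G)$ the coordinate hyperplanes indexed by $i$ are $x_i = a$ for $a \in N_i$, and by the very definition $N_i = \{0\} \cup \{1 \mid i \in L_G\} \cup \{\ell+2-j \mid (i,j) \in E_G\}$ these are precisely $x_i = 0$, together with $x_i = 1$ when $i$ carries a loop and $x_i = \ell+2-j$ for each edge out of $i$. Hence the two families coincide for every fixed $i$, and taking the union over $i \in [\ell]$ together with the common $\Cox(B_\ell)$ part yields $\I(G) = \B(N_G)$.

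Next I would confirm that $N_G$ is nonnegative and centered, as claimed. Centeredness is immediate since $\{0\} \subseteq N_i$ for all $i$. For nonnegativity (indeed $N_i \subseteq [0,\ell]$) I would invoke the constraints on edges: an edge $(i,j) \in E_G$ satisfies $i < j \le \ell$, so its weight $\ell + 2 - j$ lies in $[2,\ell]$, while loop weights equal $1$; therefore $N_i \subseteq \{0,1\} \cup [2,\ell] = [0,\ell]$, which is exactly the containment stated in the proposition.

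The argument is essentially bookkeeping, so the only point requiring care --- the closest thing to an obstacle --- is to ensure that the correspondence $(i,j) \mapsto \{x_i = \ell+2-j\}$ is well-defined and produces no spurious coincidences. Since $G$ is a simple digraph, each ordered pair $(i,j)$ occurs at most once, and $j \mapsto \ell+2-j$ is injective, so distinct out-edges of $i$ yield distinct hyperplanes; moreover $\ell+2-j = 0$ or $\ell+2-j = 1$ would force $j = \ell+2$ or $j = \ell+1$, both impossible for $j \le \ell$, so the edge weights never collide with the weights $0$ and $1$. This guarantees that the sets $N_i$ encode all and only the coordinate hyperplanes of $\I(G)$, completing the identification.
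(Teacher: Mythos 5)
Your proof is correct and is exactly the direct hyperplane-by-hyperplane verification that the paper has in mind --- its own proof of Proposition \ref{prop:IG-NI} is simply ``Straightforward.'' Your extra checks (that edge weights $\ell+2-j$ lie in $[2,\ell]$ and never collide with the weights $0$ and $1$) are sound and make the bookkeeping explicit.
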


\begin{proof}
Straightforward. 
\end{proof}

Now we characterize the freeness and supersolvability of $\tbf{c}\I(G)$.
 
   \begin{theorem}
 \label{thm:IG}
The following are equivalent:
 \begin{enumerate}[(1)]
\item  The cone $\tbf{c}\I(G)$ is supersolvable.
\item The cone $\tbf{c}\I(G)$  is  free.
\item $G$ has one of the forms described in Theorem  \ref{thm:SI-sym-G}.
\item None of the directed graphs in Figure \ref{fig:Obstructions} can occur as an induced subgraph of $G$.
\end{enumerate}
In this case, the exponents of   $\tbf{c}\I(G)$ are   given by 
$$ \exp(\tbf{c}\I(G)) = \{|E_G|+|L_G|+1\} \cup \{2i-1\}_{i=1}^\ell.$$
 \end{theorem}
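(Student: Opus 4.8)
The plan is to derive Theorem~\ref{thm:IG} from the $N$-Ish characterization of Theorem~\ref{thm:N-Ish-B-free-SS-relax}, so that the essential freeness input is already available and only a combinatorial translation remains.

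First I would invoke Proposition~\ref{prop:IG-NI}: the arrangement $\I(G)$ is literally the type $B$ $N$-Ish arrangement $\B(N_G)$ whose tuple $N_G=(N_1,\dots,N_\ell)$ is nonnegative and centered. Consequently $\cc\I(G)=\cc\B(N_G)$ and Theorem~\ref{thm:N-Ish-B-free-SS-relax} applies directly, yielding the equivalence $(1)\Leftrightarrow(2)$ and identifying both with the tuple condition: with $D=\{i:|N_i|>1\}$, either $|D|\le 1$, or $|D|\ge 2$ and $N_i=\{0,a\}$ for a common $a\in\Z_{>0}$ and every $i\in D$. From the defining formula for $N_i$ one reads that $|N_i|$ equals $1$ plus the number of out-edges $(i,j)\in E_G$, increased by a further $1$ when $i\in L_G$; hence $i\in D$ exactly when $i$ carries a loop or is the tail of an edge, the nonzero entries of $N_i$ being $1$ (from a loop) together with the distinct values $\ell+2-j\ge 2$ (one per out-edge).

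Next I would translate this tuple condition into the graph forms $(a)$, $(b)$, $(c)$ of Theorem~\ref{thm:SI-sym-G}, proving $[\text{tuple condition}]\Leftrightarrow(3)$ and computing the exponents en route. If $|D|\le 1$, then all edges emanate from the single vertex in $D$ and all loops sit there, which is form $(a)$ (the subcase with a lone loop and no edges being the single-loop instance of $(c)$, and $|D|=0$ the edgeless base case $\cc\Cox(B_\ell)$). If $|D|\ge 2$ with common entry $\{0,a\}$: for $a=1$ each $i\in D$ has a loop and no out-edge, so $E_G=\varnothing$ with at least two loops, giving form $(c)$; for $a\ge 2$ each $i\in D$ has exactly one out-edge and no loop, all edges sharing the head $j=\ell+2-a$, giving form $(b)$. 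The converse inclusions are immediate from the same count. For the exponents one evaluates $\{|D|+|N|-1\}\cup\{2i-1\}_{i=1}^\ell$ from Theorem~\ref{thm:N-Ish-B-free-SS-relax}: form $(a)$ has $|D|=1$ and $|N|=1+|L_G|+|E_G|$; form $(b)$ has $|D|=|E_G|$, $|N|=2$, $|L_G|=0$; form $(c)$ has $|D|=|L_G|$, $|N|=2$, $|E_G|=0$. In every case $|D|+|N|-1=|E_G|+|L_G|+1$, which is the claimed extra exponent.

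Finally, for $(3)\Leftrightarrow(4)$ I would give the forbidden-induced-subgraph argument. The class of graphs of form $(a)$, $(b)$, or $(c)$ is closed under taking induced subgraphs, and each graph in Figure~\ref{fig:Obstructions} fails to be of any such form; this gives the forward direction. For the converse I would assume $G$ avoids all obstructions and run a short case analysis: the obstructions are precisely the minimal configurations in which two edges share neither tail nor head (two disjoint edges, or a directed path of length two) or in which a loop coexists with an edge whose tail lies at another vertex, and excluding these forces all edges to share a common tail or a common head and confines the loops accordingly, i.e.\ one of $(a)$, $(b)$, $(c)$. The main obstacle I anticipate is this last combinatorial step together with the boundary cases $|D|\in\{0,1\}$: one must check that the finite list in Figure~\ref{fig:Obstructions} is complete and handle the degenerate edgeless graph carefully. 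By contrast, no derivation-theoretic computation is required here, since all the hard freeness content is absorbed into Theorem~\ref{thm:N-Ish-B-free-SS-relax}.
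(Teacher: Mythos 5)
Your proposal follows the paper's own route essentially verbatim: the paper likewise proves $(1)\Leftrightarrow(2)\Leftrightarrow(3)$ by invoking Proposition~\ref{prop:IG-NI} and translating conditions $(3i)$ and $(3ii)$ of Theorem~\ref{thm:N-Ish-B-free-SS-relax} into the graph forms (a)--(c) (the paper's cases $(a')$, $(a'')$, $(c')$, $(c'')$, $(b)$, $(c''')$), with the same exponent count $|D|+|N|-1=|E_G|+|L_G|+1$, and it dismisses $(3)\Leftrightarrow(4)$ as an easy digraph verification, which your induced-subgraph-closure sketch fills in at the same level of detail. Your flagged worry about completeness of the obstruction list is well placed --- the minimal induced configurations coming from a loop at $u$ coexisting with an edge $(x,y)$ with $u\notin\{x,y\}$ live on three vertices and need to be matched against Figure~\ref{fig:Obstructions} carefully --- but this concerns the paper's unproved step $(3)\Leftrightarrow(4)$ equally, so your proposal is at least as complete as the paper's proof.
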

 
 \begin{proof}
 The equivalence $(3) \Leftrightarrow (4)$ is not hard to verify, which depends only on the underlying digraph $G$. 
 
 To prove $(1) \Leftrightarrow (2) \Leftrightarrow (3)$, by Proposition \ref{prop:IG-NI}, it suffices to translate conditions $(3i)$ and $(3ii)$ in Theorem \ref{thm:N-Ish-B-free-SS-relax} into digraphical terms. 
Condition $(3i)$ is equivalent to one of the following 
 \begin{enumerate}
 \item[$(a')$]  there exists $k\in[\ell]$ such that $E_G=\{(k,i)  \}$ for at least one $i \in [k+1,\ell]$, $L_G =\{k\}$ ($ \{0,1\} \subsetneq N_k$ and $N_i = \{0\}$ for $i\ne k$), 
\item[$(a'')$]   there exists $k\in[\ell]$ such that $E_G=\{(k,i)  \}$ for at least one $i \in [k+1,\ell]$, $L_G =\varnothing$ ($|N_k|\ge 2$ with $1 \notin N_k$  and $N_i = \{0\}$ for $i\ne k$),
\item[$(c')$] $E_G=\varnothing$, $L_G =\{k\}$ for some $k\in[\ell]$ ($N_k = \{0,1\}$ and $N_i = \{0\}$ for $i\ne k$),  
\item[$(c'')$] $E_G=L_G =\varnothing$ ($N_i = \{0\}$ for all $i$). 
\end{enumerate}
Condition $(3ii)$ is equivalent to one of the following
 \begin{enumerate}
 \item[$(b)$]  there exists $k\in[\ell]$ such that $E_G=\{(i,k) \}$ for at least two $i$'s in $[k-1]$, $L_G =\varnothing$ 
 ($N$ has at least two distinct elements equal $\{0,a\}$ with $a \in\Z_{>1}$ and the other elements equal $\{0\}$), 
\item[$(c''')$] $E_G=\varnothing$, $|L_G|\ge2$  ($N$ has at least two distinct elements equal $\{0,1\}$  and the other elements equal $\{0\}$). 
\end{enumerate}
Summarizing the conditions above yields the desired forms of $G$.
\end{proof}

  \begin{figure}[htbp!]
\centering
\begin{subfigure}{.3\textwidth}
  \centering
\begin{tikzpicture}[scale=1]
\draw (0,0) node[v](1){} node{};
\draw (1.5,0) node[v](2){} node{};
\draw[>=Stealth,->] (1) to (2);
\Loop[dist=1cm,dir=NO,style={dashed,<-}](1);
\Loop[dist=1cm,dir=NO](2);
\end{tikzpicture}
 \end{subfigure}%
\begin{subfigure}{.3\textwidth}
  \centering
\begin{tikzpicture}[scale=1]
\draw ( 1.5, 1.5) node[v](1){} node{};
\draw (0,0) node[v](2){} node{};
\draw (0, 1.5) node[v](3){} node{};
\draw[>=Stealth,->] (3) to (1);
\draw[>=Stealth,->] (2) to (3);
\draw[>=Stealth,->,dashed] (2) to (1);
\Loop[dist=1cm,dir=WE,style={dashed,<-}](2);
\end{tikzpicture}
 \end{subfigure}%
\begin{subfigure}{.3\textwidth}
  \centering
\begin{tikzpicture}[scale=1]
\draw (0,0) node[v](1){} node{};
\draw (1.5,0) node[v](2){} node{};
\draw (0, 1.5) node[v](3){} node{};
\draw (1.5,1.5) node[v](4){} node{};
\draw[>=Stealth,->] (1) to (2);
\draw[>=Stealth,->] (3) to (4);
\draw[>=Stealth,->,dashed] (3) to (2);
\draw[>=Stealth,->,dashed] (1) to (4);
\Loop[dist=1cm,dir=WE,style={dashed,<-}](1);
\Loop[dist=1cm,dir=WE,style={dashed,<-}](3);
\end{tikzpicture}
\end{subfigure}%
\caption{Obstructions to freeness and supersolvability. Possible loops and edges are shown as dashed lines.}
\label{fig:Obstructions}
\end{figure}
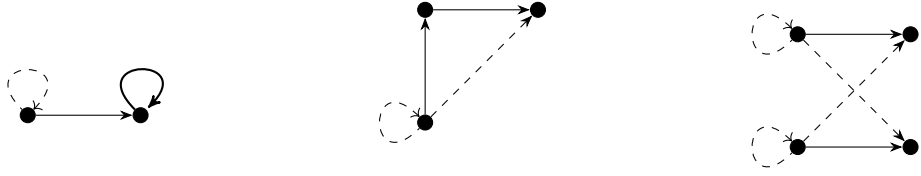

Now we characterize the freeness and supersolvability of $\cc\mcS(G)$ (Definition \ref{def:Del-SI}).
   \begin{theorem}
 \label{thm:SG}
Theorem  \ref{thm:IG} remains true if $\mcS(G)$ is put in place of $\I(G)$.
 \end{theorem}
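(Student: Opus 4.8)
The plan is to establish the four-way equivalence exactly along the lines of Theorem \ref{thm:IG}. The implication $(1)\Rightarrow(2)$ is automatic from Theorem \ref{thm:exp-ss}, and the equivalence $(3)\Leftrightarrow(4)$ concerns only the underlying digraph $G$, so it is verbatim the same graph-theoretic statement already proved for $\I(G)$ and may be quoted unchanged. Hence the real content is to prove the sufficiency $(3)\Rightarrow(1)$ and the necessity $(2)\Rightarrow(3)$ for $\mcS(G)$, and to record the exponents.

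For $(3)\Rightarrow(1)$ I would give a single uniform argument: in each of the forms (a), (b), (c) the homogenized Coxeter arrangement $\Cox(B_\ell)\subseteq\cc\mcS(G)$ (the central Coxeter hyperplanes, not $H_\infty$) is a modular coatom of $\cc\mcS(G)$. Condition \ref{def:modco}(1) is immediate since $r(\Cox(B_\ell))=\ell$ and $r(\cc\mcS(G))=\ell+1$. For condition \ref{def:modco}(2) one checks all pairs from the complement $\cc\mcS(G)\setminus\Cox(B_\ell)=\{z=0\}\cup\{x_i=z:i\in L_G\}\cup\{x_i-x_j=z:(i,j)\in E_G\}$. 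The pairs involving $\{z=0\}$ always land in a Coxeter hyperplane; two loops $x_i=z,x_{i'}=z$ land in $\{x_i-x_{i'}=0\}$; a loop $x_i=z$ and an edge $x_a-x_b=z$ land in $\{x_b=0\}$ precisely when $i=a$; and two edges $x_a-x_b=z,\ x_c-x_d=z$ land in $\{x_b-x_d=0\}$ or $\{x_a-x_c=0\}$ precisely when they share their initial or their terminal vertex. These compatibilities hold exactly in forms (a), (b), (c) and fail otherwise, which is why the characterization is sharp. Since $\Cox(B_\ell)$ is supersolvable, Proposition \ref{prop:modular coatom}(1) then gives that $\cc\mcS(G)$ is supersolvable with $\exp(\cc\mcS(G))=\exp(\Cox(B_\ell))\cup\{|\cc\mcS(G)\setminus\Cox(B_\ell)|\}=\{2i-1\}_{i=1}^\ell\cup\{|E_G|+|L_G|+1\}$, as claimed.

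For $(2)\Rightarrow(3)$ I would argue by contraposition using localization. If $G$ has none of the forms (a), (b), (c), then by $(3)\Leftrightarrow(4)$ it contains one of the digraphs of Figure \ref{fig:Obstructions} as an induced subgraph on a vertex set $W=\{i_1,\dots,i_r\}$. A direct check, analogous to the $N$-Ish localizations in the proofs of Propositions \ref{prop:N-l=2} and \ref{prop:N-l=2-relax}, shows that the localization of $\cc\mcS(G)$ at the flat $X=\{z=x_{i_1}=\cdots=x_{i_r}=0\}$ is precisely the cone over the induced sub-arrangement $\mcS(G[W])$, since exactly the coordinate, diagonal, loop, and edge hyperplanes internal to $W$ contain $X$. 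By Theorem \ref{thm:localization-ss-free}, freeness of $\cc\mcS(G)$ would force freeness of $\cc\mcS(G[W])$, so it suffices to prove that each obstruction digraph $H$ yields a non-free $\cc\mcS(H)$. For the two-vertex obstruction this is a rank-$3$ computation settled by the Ziegler restriction together with Theorem \ref{thm:Yo-3arr}; for the three- and four-vertex obstructions I would compute $\chi$ via the deletion-restriction formula \ref{thm:DR} and show it fails to factor over $\Z$, contradicting the Factorization Theorem \ref{thm:Factorization}.

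The hard part will be these necessity computations. Unlike the Ish side, where the deleted arrangement is an $N$-Ish arrangement and non-freeness falls out of the rank-$3$ results of Section \ref{subsec:F-SS-NI}, here the edges contribute the diagonal hyperplanes $x_i-x_j=1$, so the obstruction arrangements are genuinely Shi-type and must be analyzed from scratch (matching the remark in the Introduction that the Shi side is harder than the Ish side). One must treat every member of each obstruction family, namely the optional dashed loops and edges in Figure \ref{fig:Obstructions}, and for the four-vertex disjoint-edge obstruction one must handle a cone of rank $5$, where finding a non-free minor or pushing through a direct characteristic-polynomial computation is the delicate point.
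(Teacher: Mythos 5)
Your proposal is correct and follows essentially the same route as the paper's proof: sufficiency $(3)\Rightarrow(1)$ by showing $\Cox(B_\ell)$ is a modular coatom of $\cc\mcS(G)$ and applying Proposition \ref{prop:modular coatom}, and necessity by localizing at the flat $z=x_{i_1}=\cdots=x_{i_r}=0$ to reduce, via Theorem \ref{thm:localization-ss-free}, to the obstruction digraphs of Figure \ref{fig:Obstructions}, whose cones are non-free because their characteristic polynomials fail to factor over $\Z$ (Theorem \ref{thm:Factorization}). The only divergence is one of bookkeeping: the paper certifies the finitely many obstruction computations by a SageMath check, whereas you sketch (but do not execute) hand computations via deletion-restriction and Theorem \ref{thm:Yo-3arr}, while your explicit pair-by-pair verification of Definition \ref{def:modco}(2) spells out what the paper dismisses as an easy check.
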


 \begin{proof} 
It is sufficient to show $(3) \Rightarrow (1)$ and  $ (2)\Rightarrow (4)$. 
Denote $\A=\cc\mcS(G)$.
For $(3) \Rightarrow (1)$ we show that $\Cox(B_\ell)\subseteq \A$ is a modular coatom of $\mcS(G)$. 
We give a proof for one case, the remaining cases are treated similarly. 
For example, if Condition \ref{thm:SI-sym-G}(b) occurs, we may assume $E_G=\{(1,\ell),\ldots, (p,\ell)\}$ for some $p \le \ell-1$. 
Then $\A \setminus \Cox(B_\ell) =\{z=0\} \cup \{x_i - x_\ell =z \mid 1 \le i \le p\}$. 
Definition \ref{def:modco} can be checked easily. 
Now apply  Proposition \ref{prop:modular coatom}. 
 
We checked by a SageMath programing that the characteristic polynomial of any arrangement (of rank at most $4$) defined by a digraph in Figure \ref{fig:Obstructions} has a noninteger root. 
Thus the cones over the arrangements defined by these digraphs are not free by the factorization theorem \ref{thm:Factorization}. 
Using Theorem \ref{thm:localization-ss-free} we may conclude that $ (2)\Rightarrow (4)$. 
 \end{proof}

\begin{proof}[\tbf{Proof of  Theorem  \ref{thm:SI-sym-G}}]
 It follows from Theorems  \ref{thm:IG} and  \ref{thm:SG}.
 \end{proof} 

\section{Proof of Theorem  \ref{thm:1stcol-PC-intro}}
\label{sec:CQP-PC}
\quad
First we need a variant of Theorem \ref{thm:k=1,l,m,p}.
\begin{theorem}
 \label{thm:k=1,l,m,p-hat}
Let $a \ge 1$, $m\ge 0$,  $\ell \ge 1$ and $0 \le p \le \ell$. 
Let $\widehat{\B}^p_\ell(m,a)$ be the arrangement consisting of the hyperplanes
\begin{align*}
x_{i}\pm x_{j} &= [1-a , a]\quad  (1 \leq i < j \leq \ell), \\
x_i&=  [-m , m+1] \quad (1 \leq   i \leq p), \\
x_i&=[-m , m]  \quad (p<  i \leq \ell).
\end{align*}
Then the cone over $ \widehat\B^p_\ell(m,a)$
is   free with exponents 
$$\exp(\textbf{c} \widehat\B^p_\ell(m, a)) = \{1, (2m+2+2a\ell-2a)^{p}, (2m+1+2a\ell-2a)^{\ell-p}\}.$$ 
 \end{theorem}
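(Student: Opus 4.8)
The plan is to run an induction on $\ell$ closely parallel to the proof of Theorem~\ref{thm:k=1,l,m,p}, but organised so that only the \emph{addition} part of the addition--deletion theorem~\ref{thm:AD} is ever invoked. The crucial preliminary observation is that when $p=0$ every coordinate weight of $\widehat{\B}^0_\ell(m,a)$ equals $[-m,m]$, so $\widehat{\B}^0_\ell(m,a)$ coincides verbatim with the arrangement $\B^0_\ell(m,a)$ of Theorem~\ref{thm:k=1,l,m,p}; its cone is therefore already known to be free with exponents $\{1,(2m+2a\ell-2a+1)^\ell\}$, which is exactly the asserted multiset at $p=0$. This supplies a free base case for a secondary induction on $p$. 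Concretely I would fix $a\ge 1$ and prove by induction on $\ell$ the statement ``for every $m\ge 0$ and every $0\le p\le\ell$ the cone $\cc\widehat{\B}^p_\ell(m,a)$ is free with the stated exponents'', the case $\ell=1$ being immediate since every central arrangement in $\K^2$ is free. Inside the step on $\ell$ I would then induct upward on $p$, starting from $p=0$.

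For the step that passes from $p$ to $p+1$ (with $0\le p\le\ell-1$) I would set $\A:=\cc\widehat{\B}^{p+1}_\ell(m,a)$ and single out $H=\{x_{p+1}=(m+1)z\}$, the unique hyperplane distinguishing the special weight $[-m,m+1]$ of vertex $p+1$ from the ordinary weight $[-m,m]$. Deleting $H$ demotes vertex $p+1$ to an ordinary vertex, so $\A'=\A\setminus\{H\}=\cc\widehat{\B}^p_\ell(m,a)$ is free by the secondary hypothesis. The heart of the matter is identifying the restriction $\A''=\A^H$. Restricting to the extreme value $x_{p+1}=(m+1)z$ fuses, for each surviving vertex, the original coordinate hyperplanes with the two diagonal families $x_i\pm x_{p+1}=[1-a,a]z$. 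A direct interval bookkeeping shows the merge is gap-free: an ordinary vertex $j$ receives $[-m,m]\cup[m+1-a,m+a]\cup[-m-a,a-m-1]=[-(m+a),m+a]$, while a special vertex $i\le p$ receives $[-m,m+1]\cup[m+2-a,m+1+a]\cup[-m-a,a-m-1]=[-(m+a),(m+a)+1]$; vertex $p+1$ vanishes and the diagonals among the survivors are untouched. Hence $\A''=\cc\widehat{\B}^p_{\ell-1}(m+a,a)$, free by the primary hypothesis on $\ell$.

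With $\A'$ and $\A''$ in hand, the proof is closed by an exponent count. Writing $D_1:=2m+2+2a(\ell-1)$ and $D_0:=2m+1+2a(\ell-1)=D_1-1$, the two hypotheses read $\exp(\A')=\{1,D_1^{p},D_0^{\ell-p}\}$ and $\exp(\A'')=\{1,D_1^{p},D_0^{\ell-1-p}\}$. Since deleting $H$ lowers exactly one exponent from $D_1$ to $D_0$ and the restriction removes exactly one exponent $D_1$, the addition part of Theorem~\ref{thm:AD} applies and gives that $\A$ is free with $\exp(\A)=\{1,D_1^{p+1},D_0^{\ell-p-1}\}$, precisely the claim. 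I expect the only real obstacle to be the restriction computation $\A''=\cc\widehat{\B}^p_{\ell-1}(m+a,a)$: one must check that the three intervals coalesce into a single contiguous interval with the stated endpoints for \emph{all} $a\ge1$ and $m\ge0$ (including the regime where $a$ is large relative to $m$), and that the parameter shift $m\mapsto m+a$, $\ell\mapsto\ell-1$ is uniform across special and ordinary vertices; everything else is routine bookkeeping.
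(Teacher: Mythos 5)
Your induction scheme is essentially the paper's own: the paper proves this theorem by the same lexicographic induction on $(\ell,p)$, with the same addition step (single out $H=\{x_{p+1}=(m+1)z\}$, identify the deletion as $\cc\widehat{\B}^{p}_{\ell}(m,a)$ and the restriction as $\cc\widehat{\B}^{p}_{\ell-1}(m+a,a)$), and your interval bookkeeping for the restriction (the three ranges $[-m-a,a-m-1]$, $[-m,m+1]$ or $[-m,m]$, and $[m+2-a,m+1+a]$ or $[m+1-a,m+a]$ do coalesce gap-free for all $a\ge1$, $m\ge0$) as well as your exponent count feeding the addition part of Theorem \ref{thm:AD} are correct. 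Where you genuinely diverge is at the base $p=0$: the paper, mirroring Case 2 of its proof of Theorem \ref{thm:k=1,l,m,p}, handles the end of each row with the \emph{deletion} part of the addition--deletion theorem, whereas you anchor by citing Theorem \ref{thm:k=1,l,m,p} directly, which is what lets you claim that only the addition part is ever invoked.

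That anchor has a genuine, though localized and repairable, gap: Theorem \ref{thm:k=1,l,m,p} is stated only for $m\ge 1$, while the present theorem allows $m\ge 0$, and your induction does hit $m=0$ at the top level. (Every recursive invocation is safe, since the restriction step replaces $m$ by $m+a\ge 1$; but for $m=0$ your base case $\cc\widehat{\B}^0_\ell(0,a)$, the cone over $\{x_i\pm x_j=[1-a,a],\ x_i=0\}$, is simply not covered by the citation, so the secondary induction at $m=0$ never gets started for $\ell\ge2$.) Two fixes: (i) run the paper's deletion argument at $p=0$ --- add $x_1=(m+1)z$ to pass to $\cc\widehat{\B}^1_\ell(m,a)$, which is free by your own addition step; its restriction to this hyperplane is $\cc\widehat{\B}^0_{\ell-1}(m+a,a)$, free by the induction on $\ell$; then the implication $(1)+(3)\Rightarrow(2)$ of Theorem \ref{thm:AD} gives freeness of $\cc\widehat{\B}^0_\ell(m,a)$ with the right exponents; or (ii) anchor at the other end of the row, noting that $\widehat{\B}^\ell_\ell(m,a)=\B^\ell_\ell(m+1,a)$ because $[-m,m+1]=[1-(m+1),m+1]$, which lies in the range of Theorem \ref{thm:k=1,l,m,p} for all $m\ge0$, and then descend in $p$ by deletion. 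Either repair reintroduces a deletion step at one end of the row, exactly as in the paper; with that single adjustment your proof is complete.
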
 
 
 \begin{proof}
 Similar to the proof Theorem \ref{thm:k=1,l,m,p}: Define the lexicographic order into $\{ (\ell, p) \mid 0\le p \le \ell, \ell \ge 1\}$ and prove by induction on $ (\ell,p) $.\end{proof}

The following lemma is crucial.

\begin{lemma}
 \label{lem:1stcol-PC}
If $q > 2m+2$ is an integer, then
$$\chi^{\quasi}_{\B^{p,1}_\ell(m)}(q)= \chi^{\quasi}_{\widehat\B^{\ell-p}_\ell(m,1)}(q+1).$$

 \end{lemma}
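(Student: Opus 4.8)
The plan is to reduce the claimed quasi-polynomial identity to an equality between the cardinalities of the $q$- and $(q+1)$-reduced complements, and to prove the latter for \emph{every} modulus by a deletion--restriction recursion mirroring the freeness inductions of Theorems \ref{thm:k=1,l,m,p} and \ref{thm:k=1,l,m,p-hat}. By Theorem \ref{thm:KTT} one has $\chi^{\quasi}_{\A}(q)=|\M(\A_q)|$ for all large $q$, and two quasi-polynomials agreeing for all large $q$ agree as functions; hence it suffices to show $|\M(\B^{p,1}_\ell(m)_q)|=|\M(\widehat\B^{\ell-p}_\ell(m,1)_{q+1})|$ for all $q\ge 1$. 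Write $N^{p}_\ell(m;q)$ and $\widehat N^{P}_\ell(m;q)$ for these two reduced-complement counts.

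For any coordinate hyperplane $H$ one has the purely set-theoretic identity $|\M(\A_q)|=|\M((\A\setminus H)_q)|-|\M((\A^H)_q)|$, valid for every $q$. Applying it to $\B^{p,1}_\ell(m)$ with $H=\{x_{p+1}=-m\}$ and computing the restriction exactly as in the proof of Theorem \ref{thm:k=1,l,m,p} (the hyperplanes $x_{p+1}\pm x_j\in\{0,1\}$ fold into coordinate conditions on the $x_j$, raising $m$ to $m+1$), and dually to $\widehat\B^{P}_\ell(m,1)$ with $H=\{x_P=m+1\}$ as in Theorem \ref{thm:k=1,l,m,p-hat}, I obtain the two recursions
\[ N^p_\ell(m;q)=N^{p+1}_\ell(m;q)-N^{p}_{\ell-1}(m+1;q)\ (0\le p<\ell),\qquad \widehat N^{P}_\ell(m;q)=\widehat N^{P-1}_\ell(m;q)-\widehat N^{P-1}_{\ell-1}(m+1;q)\ (1\le P\le\ell). \]
Here I use that the counts are invariant under permuting coordinates, so each restriction is identified with a smaller member of the same family.

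The heart of the argument is the base identity $N^{\ell}_\ell(m;q)=\widehat N^{0}_\ell(m;q+1)$. When $q\le 2m$ both sides vanish, and when $q\ge 2m+1$ one unwinds each congruence into a unique integer equation: both sides then count integer tuples in $A^\ell$ with $A=\{m+1,\dots,q-m\}$ subject to $z_i-z_j\notin\{0,1\}$ for $i<j$ together with a pair-sum condition, forbidden at $\{q,q+1\}$ on the left and at $\{q+1,q+2\}$ on the right. The map $\Psi(z)_i=(q+1)-z_{\ell+1-i}$ (reflect the values and reverse the coordinate order) is a bijection of $A^\ell$: the reversal carries $z_i-z_j$ to $z_{i'}-z_{j'}$ with $i'<j'$, so the difference condition is preserved, while each pair-sum $s$ is sent to $2(q+1)-s$, which interchanges $\{q,q+1\}$ with $\{q+1,q+2\}$; thus $\Psi$ matches the two complements. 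The main obstacle is precisely the asymmetry of the Shi hyperplanes $x_i-x_j=1$ (present only for $i<j$), which defeats any naive coordinate-wise affine map; it is this asymmetry that the \emph{reversal} part of $\Psi$ is designed to absorb, while the value-reflection part reconciles the two sum conditions across the change of modulus.

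Finally I would induct on $\ell$ (the cases $\ell\le 1$ being a direct count valid for all $q$). Putting $E_p:=N^p_\ell(m;q)-\widehat N^{\ell-p}_\ell(m;q+1)$ and feeding the inductive hypothesis for $\ell-1$ into the restriction terms---which coincide since $N^{p}_{\ell-1}(m+1;q)=\widehat N^{\ell-1-p}_{\ell-1}(m+1;q+1)$ and $\ell-1-p=(\ell-p)-1$---the two recursions subtract to $E_p=E_{p+1}$ for $0\le p<\ell$; together with $E_\ell=0$ from the previous step this forces $E_p=0$ for every $p$. This proves the counting identity for all $q\ge1$, and invoking Theorem \ref{thm:KTT} at large $q$ upgrades it to the asserted equality of characteristic quasi-polynomials, in particular for $q>2m+2$. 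A secondary point requiring care is the exact identification of the two restrictions in the second step, where the Coxeter- and Shi-type $B$ hyperplanes collapse onto coordinate hyperplanes with the modulus-independent shift $m\mapsto m+1$.
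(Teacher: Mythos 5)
Your argument is correct, and its kernel coincides with the paper's: after unwinding the congruences into exact integer conditions (legitimate since $m\ge 1$ and the representatives $[m+1,q-m]$ leave no wrap-around in the differences, while the pair sums can only hit $q,q+1$, resp.\ $q+1,q+2$), the map $t_i=(q+1)-z_{\ell+1-i}$ is precisely the composite change of variables $v_i=-z_{\ell+1-i}$, $t_i=v_i+q+1$ used in the paper, and your observation that the coordinate \emph{reversal} is what absorbs the asymmetry of the hyperplanes $x_i-x_j=1$ ($i<j$) is exactly the right point. Where you diverge is in scope: you run the bijection only in the fully symmetric case $p=\ell$ and then propagate to all $p$ by a deletion--restriction recursion $N^p_\ell(m;q)=N^{p+1}_\ell(m;q)-N^p_{\ell-1}(m+1;q)$ (and its hatted dual) together with a double induction on $(\ell,p)$, whereas the paper applies the \emph{same} map uniformly for every $p$: the reversal automatically exchanges the weight pattern on the first $p$ coordinates with that on the last $p$, which is why the parameter comes out as $\ell-p$ on the hatted side, and the whole lemma is a single five-line chain of equalities. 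So your inductive scaffold, while sound --- the count identity $|\M(\A_q)|=|\M((\A\setminus H)_q)|-|\M((\A^H)_q)|$ is indeed valid for every $q$ when $H$ is a coordinate hyperplane with unit coefficient, since substitution commutes with $q$-reduction, and your identifications of the restrictions agree with the computations in the proofs of Theorems \ref{thm:k=1,l,m,p} and \ref{thm:k=1,l,m,p-hat} --- is strictly unnecessary; what it buys is a statement valid for all moduli $q\ge 1$ (slightly more than the lemma asserts) and a bijection localized at the most symmetric member, at the cost of extra machinery. One small caution: your phrase that the counts are ``invariant under permuting coordinates'' is false in general, because the hyperplanes $x_i-x_j=1$ are order-sensitive; fortunately your argument only ever uses the order-preserving relabeling that closes the gap left by the deleted coordinate, so no harm is done, but the justification should be stated that way. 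Your treatment of the passage from counts to quasi-polynomial values via Theorem \ref{thm:KTT} (constituent-wise agreement at large arguments forces agreement everywhere) is also fine and, if anything, more careful than what the paper makes explicit.
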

 
  \begin{proof}
  We prove the assertion by constructing a one-to-one correspondence between the complements of the $q$-reductions. 
  \begin{align*}
\chi^{\quasi}_{\B^{p,1}_\ell(m)}(q) 
&=\# \left\{ \textbf{z}  \in   \Z_q^\ell   \, \middle| \, 
\begin{array}{c}
     \overline{z}_i \pm \overline{z}_j \ne \overline0, \overline1 \quad(1 \le i <j \le \ell),  \\
      \overline{z}_i \ne \overline{c} \quad( c \in  [1-m , m],\,1 \leq   i \leq p), \\
      \overline{z}_i \ne \overline{c} \quad( c \in  [-m , m],\,p<  i \leq \ell)
    \end{array}
\right\} \\
&= \#\left\{(z_1,\ldots,z_\ell)  \in \Z^\ell  \, \middle| \,
\begin{array}{c}
     z_i - z_j \ne 0,1 \quad(1 \le i <j \le \ell),  \\
      z_i +z_j \ne q,q+1  \quad (1 \le i <j \le \ell), \\
    m+ 1 \le z_i \le q-m  \quad (1 \leq   i \leq p), \\
        m+ 1 \le z_i \le q-m-1  \quad (p<  i \leq \ell)
    \end{array}
\right\} \\
&= \#\left\{(v_1,\ldots,v_\ell)  \in \Z^\ell\, \middle| \,
\begin{array}{c}
     v_i - v_j \ne 0,1 \quad(1 \le i <j \le \ell),  \\
      v_i +v_j \ne -q,-(q+1)  \quad (1 \le i <j \le \ell), \\
    -q+m  \le v_i \le   -m- 1\quad ( \ell-p+1 \le  i \leq \ell ), \\
-q+m  +1\le v_i \le   -m- 1 \quad (1 \leq   i \leq \ell-p)
    \end{array}
\right\} \\
&= \#\left\{(t_1,\ldots,t_\ell)  \in \Z^\ell  \, \middle| \,
\begin{array}{c}
     t_i - t_j \ne 0,1 \quad(1 \le i <j \le \ell),  \\
      t_i +t_j \ne q+1,q+2  \quad (1 \le i <j \le \ell), \\
    m+ 1 \le t_i \le q-m  \quad ( \ell-p+1 \le  i \leq \ell ), \\
        m+ 2 \le t_i \le q-m  \quad (1 \leq   i \leq \ell-p)
    \end{array}
\right\} \\
&=\# \left\{ \textbf{t}  \in   \Z_{q+1}^\ell  \, \middle| \,
\begin{array}{c}
     \overline{t}_i \pm \overline{t}_j \ne \overline0, \overline1 \quad(1 \le i <j \le \ell),  \\
      \overline{t}_i \ne \overline{c} \quad( c \in  [-m , m],\, \ell-p+1 \le  i \leq \ell ), \\
      \overline{t}_i \ne \overline{c} \quad( c \in  [-m , m+1],\,1 \leq   i \leq \ell-p)
    \end{array}
\right\} \\
&=\chi^{\quasi}_{\widehat\B^{\ell-p}_\ell(m,1)}(q+1).
\end{align*} 
We have used the following changes of variables: $v_i = -z_{\ell+1-i}$, $t_i = v_i +q+1$. 

 \end{proof}

\begin{proof}[\tbf{Proof of  Theorem  \ref{thm:1stcol-PC-intro}}]
Note that the lcm periods of both $\chi^{\quasi}_{\B^{p,1}_\ell(m)}(q)$ and $\chi^{\quasi}_{\widehat{\B}^p_\ell(m,a)}(q)$   equal the lcm period of $\Cox(B_\ell)$ since the lcm period depends only on the matrix $C$. 
Thus they are equal to $2$ (see e.g., \cite[Corollary 3.2]{KTT10}). 
So these quasi-polynomials have at most two different constituents.  

By Lemma \ref{lem:1stcol-PC}, Theorems \ref{thm:KTT11} and   \ref{thm:k=1,l,m,p-hat}, if $q$ is a sufficiently large even integer, then
$$\chi^{\quasi}_{\B^{p,1}_\ell(m)}(q)  =  (q-(2m+2\ell-2))^{p}(q-(2m+2\ell-1))^{\ell-p}.$$

Thus by Theorem \ref{thm:k=1,l,m,p}, the constituents of $\chi^{\quasi}_{\B^{p,1}_\ell(m)}(q)$ are identical. 
Hence $\chi^{\quasi}_{\B^{p,1}_\ell(m)}(q)$ is a polynomial.
 
 \end{proof}

The following corollary is straightforward. 
\begin{corollary} 
\label{cor:CQP-BSI-hat}
The characteristic quasi-polynomial of $ \widehat\B^{p}_\ell(m,1)$ for each $0 \le p \le \ell$ is a polynomial and given by 
$$\chi^{\quasi}_{\widehat\B^{p}_\ell(m,1)}(t)= \chi_{\widehat\B^{p}_\ell(m,1)}(t) =  (t-(2m+2\ell))^{p}(t-(2m+2\ell-1))^{\ell-p}.$$
Hence period collapse also occurs in this case.
\end{corollary}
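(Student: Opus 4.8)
The plan is to obtain the corollary as a direct consequence of Lemma \ref{lem:1stcol-PC} and the polynomiality established in Theorem \ref{thm:1stcol-PC-intro}, so that no new arrangement-theoretic input is required. Since the statement concerns $\widehat\B^{p}_\ell(m,1)$ whereas Lemma \ref{lem:1stcol-PC} is written in terms of $\widehat\B^{\ell-p}_\ell(m,1)$, the first step is to replace $p$ by $\ell-p$ throughout that lemma, which gives
$$\chi^{\quasi}_{\B^{\ell-p,1}_\ell(m)}(q)= \chi^{\quasi}_{\widehat\B^{p}_\ell(m,1)}(q+1)$$
for every integer $q>2m+2$.

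Next I would feed in Theorem \ref{thm:1stcol-PC-intro}, which asserts that the left-hand side is a genuine polynomial in $q$; combining Theorem \ref{thm:k=1,l,m,p} with the factorization theorem \ref{thm:Factorization} identifies this polynomial explicitly as $(q-(2m+2\ell-2))^{\ell-p}(q-(2m+2\ell-1))^{p}$. Setting $t=q+1$ and simplifying the two shifted factors then yields
$$\chi^{\quasi}_{\widehat\B^{p}_\ell(m,1)}(t)= (t-(2m+2\ell))^{p}(t-(2m+2\ell-1))^{\ell-p}$$
for all integers $t>2m+3$, which is exactly the claimed formula; that this polynomial also coincides with $\chi_{\widehat\B^{p}_\ell(m,1)}(t)$ follows at once from the exponents recorded in Theorem \ref{thm:k=1,l,m,p-hat} via Theorem \ref{thm:Factorization}.

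To deduce period collapse, I would argue exactly as in the proof of Theorem \ref{thm:1stcol-PC-intro}: the lcm period of $\widehat\B^{p}_\ell(m,1)$ equals that of $\Cox(B_\ell)$, namely $2$, so $\chi^{\quasi}_{\widehat\B^{p}_\ell(m,1)}$ has at most two constituents. The displayed identity holds for all large integers $t$ of both parities, and two polynomials that agree on infinitely many integers must coincide; hence both constituents equal the single polynomial above, so the quasi-polynomial is that polynomial, with minimum period $1$ against an lcm period of $2$. I anticipate no genuine obstacle here—the only points requiring a moment's care are the index shift $p\mapsto\ell-p$ when invoking Lemma \ref{lem:1stcol-PC} and the verification that the range $t>2m+3$ meets each residue class modulo $2$.
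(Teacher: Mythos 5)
Your proposal is correct and matches the paper's intended derivation: the paper labels this corollary ``straightforward'' precisely because it follows, as you argue, from the index-shifted Lemma \ref{lem:1stcol-PC}, the polynomiality and explicit factorization coming from Theorems \ref{thm:1stcol-PC-intro}, \ref{thm:k=1,l,m,p} and \ref{thm:Factorization}, the substitution $t=q+1$, and the two-constituent bound from the lcm period $2$ of $\Cox(B_\ell)$. Your attention to the $p\mapsto\ell-p$ swap and to the fact that $t=q+1$ ranges over both residue classes modulo $2$ covers the only points where care is needed, and the consistency check against the exponents in Theorem \ref{thm:k=1,l,m,p-hat} is exactly the paper's implicit justification of the displayed formula.
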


For the Ish arrangements in the last column of the Shi descendant matrix when $\ell=2$, we have verified that for $0\le p \le 2$
$$\chi^{\quasi}_{\B^{p,2}_2(m)}(q) = \begin{cases}
\chi_{\B^{p,2}_2(m)}(q)  & \mbox{ if $q$ is odd}, \\
\chi_{\B^{p,2}_2(m)}(q) +1 &  \mbox{ if $q$ is even}. 
\end{cases}$$

Thus the type $B$ Shi and Ish arrangements do not share the same period collapse property in general.

\vskip 1em
\noindent
\textbf{Acknowledgements.} 
 The first author was supported by a postdoctoral fellowship of the Alexander von Humboldt Foundation at Ruhr-Universit\"at Bochum.
This work started when the first author was a participant at the Combinatorial Coworkspace in March 2022. He thanks the organizers of the event for creating a collaborative research environment and Galen Dorpalen-Barry for stimulating discussion.

\bibliographystyle{amsplain}
\bibliography{references}

\providecommand{\bysame}{\leavevmode\hbox to3em{\hrulefill}\thinspace}
\providecommand{\MR}{\relax\ifhmode\unskip\space\fi MR }
\providecommand{\MRhref}[2]{%
  \href{http://www.ams.org/mathscinet-getitem?mr=#1}{#2}
}
\providecommand{\href}[2]{#2}
\begin{thebibliography}{10}

\bibitem{Abe16}
T.~Abe, \emph{Divisionally free arrangements of hyperplanes}, Invent. Math.
  \textbf{204} (2016), 317--346.

\bibitem{AST17}
T.~Abe, D.~Suyama, and S.~Tsujie, \emph{The freeness of {I}sh arrangements}, J.
  Combin. Theory Ser. A \textbf{146} (2017), 169--183.

\bibitem{ATT21}
T.~Abe, T.~N. Tran, and S.~Tsujie, \emph{Vertex-weighted digraphs and freeness
  of arrangements between {S}hi and {I}sh},  (arXiv preprint, 2021),
  \url{https://arxiv.org/abs/2108.02518}.

\bibitem{Arms13}
D.~Armstrong, \emph{Hyperplane arrangements and diagonal harmonics}, J. Comb.
  \textbf{4} (2013), no.~2, 157--190.

\bibitem{AR12}
D.~Armstrong and B.~Rhoades, \emph{The {S}hi arrangement and the {I}sh
  arrangement}, Trans. Amer. Math. Soc. \textbf{364} (2012), no.~3, 1509--1528.

\bibitem{A96}
C.~A. Athanasiadis, \emph{Characteristic polynomials of subspace arrangements
  and finite fields}, Adv. Math. \textbf{122} (1996), no.~2, 193--233.

\bibitem{Atha98}
\bysame, \emph{On free deformations of the braid arrangement}, European J.
  Combin. \textbf{19} (1998), 7--18.

\bibitem{BEZ90}
A.~Bj{\"o}rner, P.~H. Edelman, and G.~M. Ziegler, \emph{Hyperplane arrangements
  with a lattice of regions}, Discrete Comput. Geom. \textbf{5} (1990), no.~3,
  263--288.

\bibitem{DG18}
R.~Duarte and A.~Guedes~de Oliveira, \emph{Between {S}hi and {I}sh}, Discrete
  Math. \textbf{341} (2018), no.~2, 388--399.

\bibitem{ER94}
P.~H. Edelman and V.~Reiner, \emph{Free hyperplane arrangements between
  ${A}_{n-1}$ and ${B}_n$}, Math. Z. \textbf{215} (1994), 347--365.

\bibitem{Headley97}
P.~Headley, \emph{On a family of hyperplane arrangements related to affine
  {W}eyl groups}, J. Algebr. Comb. \textbf{6} (1997), no.~4, 331--338.

\bibitem{HTY22}
A.~Higashitani, T.~N. Tran, and M.~Yoshinaga, \emph{Period collapse in
  characteristic quasi-polynomials of hyperplane arrangements}, Int. Math. Res.
  Not. IMRN (in press, 2022), \url{https://doi.org/10.1093/imrn/rnac104}.

\bibitem{JT84}
M.~Jambu and H.~Terao, \emph{Free arrangements of hyperplanes and supersolvable
  lattices}, Adv. in Math. \textbf{52} (1984), no.~3, 248--258.

\bibitem{KTT08}
H.~Kamiya, A.~Takemura, and H.~Terao, \emph{Periodicity of hyperplane
  arrangements with integral coefficients modulo positive integers}, J. Algebr.
  Comb. \textbf{27} (2008), no.~3, 317--330.

\bibitem{KTT11}
\bysame, \emph{Periodicity of non-central integral arrangements modulo positive
  integers}, Ann. Comb. \textbf{15} (2011), no.~3, 449--464.

\bibitem{KTT10}
\bysame, \emph{The characteristic quasi-polynomials of the arrangements of root
  systems and mid-hyperplane arrangements}, Arrangements, local systems and
  singularities (Progr. Math., 283, Birkh\"{a}user Verlag, Basel, 2010),
  177--190.

\bibitem{LRW14}
E.~Leven, B.~Rhoades, and A.~T. Wilson, \emph{Bijections for the {S}hi and
  {I}sh arrangements}, European J. Combin. \textbf{39} (2014), 1--23.

\bibitem{MR21}
P.~M\"{u}cksch and G.~R\"{o}hrle, \emph{Accurate arrangements}, Adv. in Math.
  \textbf{Vol. 383C} (2021).

\bibitem{MRT23}
P.~M\"ucksch, G.~R\"ohrle, and T.~N. Tran, \emph{Flag-accurate arrangements},
  (arXiv preprint, 2023), \url{https://arxiv.org/abs/2302.00343}.

\bibitem{NTY}
Y.~Numata, S.~Tsujie, and A.~Yazawa, \emph{A bijection between the regions of
  {S}hi and {I}sh arrangements of type {$B$}}, in preparation.

\bibitem{OT92}
P.~Orlik and H.~Terao, \emph{Arrangements of hyperplanes}, Grundlehren der
  Mathematischen Wissenschaften 300, Springer-Verlag, Berlin, 1992.

\bibitem{Shi86}
J.-Y. Shi, \emph{The {K}azhdan-{L}usztig cells in certain affine {W}eyl
  groups}, Lecture Notes in Mathematics, vol. 1179, Springer, Berlin, 1986.

\bibitem{St72}
R.~P. Stanley, \emph{Supersolvable lattices}, Algebra Univers. \textbf{2}
  (1972), 197--217.

\bibitem{St07}
\bysame, \emph{An introduction to hyperplane arrangements}, in: E. Miller, V.
  Reiner, B. Sturmfels (Eds.), Geometric Combinatorics, in: IAS/Park City Math.
  Ser., vol. 13, Amer. Math. Soc., Providence, RI, pp. 389-496, 2007.

\bibitem{T80}
H.~Terao, \emph{Arrangements of hyperplanes and their freeness {I}, {II}}, J.
  Fac. Sci. Univ. Tokyo \textbf{27} (1980), 293--320.

\bibitem{T81}
\bysame, \emph{Generalized exponents of a free arrangement of hyperplanes and
  {S}hepherd-{T}odd-{B}rieskorn formula}, Invent. Math. \textbf{63} (1981),
  no.~1, 159--179.

\bibitem{Yo04}
M.~Yoshinaga, \emph{Characterization of a free arrangement and conjecture of
  {E}delman and {R}einer}, Invent. Math. \textbf{157} (2004), no.~2, 449--454.

\bibitem{Yo05}
\bysame, \emph{On the freeness of $3$-arrangements}, Bull. London Math. Soc.
  \textbf{37} (2005), 126--134.

\bibitem{Y18W}
\bysame, \emph{Worpitzky partitions for root systems and characteristic
  quasi-polynomials}, Tohoku Math. J. \textbf{70} (2018), no.~1, 39--63.

\bibitem{Z89}
G.~M. Ziegler, \emph{Multiarrangements of hyperplanes and their freeness},
  Singularities (Iowa City, IA, 1986). In: Contemporary Mathematics, vol. 90,
  pp. 345-359. American Mathematical Society, Providence, RI, 1989.

\end{thebibliography}

\end{document}